\documentclass[11pt]{article}
\usepackage[margin=1.1in]{geometry}
\usepackage{amssymb}
\usepackage{amsmath}
\usepackage{amsthm}
\ifdefined\kanjiskip
  \usepackage[dvipdfmx]{xcolor}
  \usepackage[dvipdfmx]{graphicx}
\else
  \usepackage{xcolor}
  \usepackage{graphicx}
\fi
\usepackage{tikz}

\renewcommand{\proofname}{\textbf{Proof}}
\makeatletter
\renewenvironment{proof}[1][\proofname]{\par
  \pushQED{\qed}%
  \normalfont \topsep6\p@\@plus6\p@\relax
  \trivlist
  \item[\hskip\labelsep\bfseries #1\@addpunct{.}]\ignorespaces
}{\popQED\endtrivlist\@endpefalse}
\makeatother
\newtheorem{theorem}{Theorem}[section]
\newtheorem{proposition}[theorem]{Proposition}
\newtheorem{lemma}{Lemma}[section]
\newtheorem{corollary}{Corollary}[section]
\theoremstyle{definition}
\newtheorem{example}{Example}
\newtheorem{remark}{Remark}
\newcommand{\Na}{\mathcal{N}}
\newcommand{\esup}{\operatorname*{ess\,sup}}
\newcommand{\N}{\mathbb{N}}
\newcommand{\F}{\mathcal{F}}
\newcommand{\T}{\mathcal{T}}

\begin{document}
\title{Monotone Multiple Stopping and Last-Success Problems}

\author{Katsunori Ano\\Bunkyo Gakuin University}
\date{October 8, 2026}

\maketitle

\begin{abstract}
An optimal stopping problem is monotone when its stopping region is preserved under subsequent
	transitions, so that the one-step look-ahead stopping time is optimal.
We develop a finite-horizon discrete-time theory of multiple stopping that separates
	general recursive dynamic programming from this stronger monotone structure.
Extending the classical martingale-system approach, we construct a recursively defined
	optimal vector of stopping times without assuming monotonicity.
If monotonicity holds at every exercise level, this vector coincides with the
	recursive vector of one-step look-ahead stopping times.
A bounded counterexample shows that monotonicity of the original single-stopping problem
	alone is insufficient, whereas monotonicity propagates to every level when the level-one
	one-step look-ahead function is deterministic.
We apply the theory to multiple-selection last-success problems.
For the classical independent odds setting, a continuous-time Poisson embedding
	gives an alternative proof of the multiple-stopping lower bound and clarifies
	the associated threshold constants.
We also examine random horizons, Markov-dependent trials, and an unknown success probability.
\end{abstract}

\medskip
\noindent {\itshape Keywords:} optimal stopping; multiple stopping; monotone; 
odds problem; last-success problem.
    
\smallskip
\noindent {\itshape 2020 Mathematics Subject Classification:} 60G40; 62L15.

\section{Introduction}
Optimal stopping problems are often solved by identifying a stopping region
	and proving that the first entrance time into that region is optimal.  
In a monotone stopping problem in the sense of Chow and Robbins~\cite{chowrobbins}, 
	the stopping region is preserved by subsequent transitions, and this stability reduces
	the optimal rule to the one-step look-ahead rule.  
The classical theory was developed through the martingale-system approach of 
	Snell~\cite{snell1952} and the monotone stopping
	theory of Chow and Robbins~\cite{chowrobbins}; standard accounts include
	Chow, Robbins and Siegmund~\cite{chowrobbinssiegmund}, Ferguson~\cite{fergusonbook},
	and Neveu~\cite{neveu}.

The corresponding question for multiple stopping is more delicate.
An early discrete-time contribution is Haggstrom~\cite{haggstrom1967}, who studied
	optimal sequential procedures when more than one stop is required.
In continuous-time Markovian models this issue is usually expressed through a system of variational 
	inequalities or free-boundary problems; see, for example, Carmona and
	Dayanik~\cite{carmonadayanik}.  
Even when only two exercise rights are available, proving the geometry and regularity of the 	
	stopping regions may be difficult.

This article is based on the multiple-stopping and last-success parts of
	the author's Japanese preprint~\cite{ano2026}, reorganized here as a self-contained research article.  
The best-choice applications from the longer manuscript
	are treated separately in the companion paper~\cite{ano2026best}.
We study the finite-horizon discrete-time multiple-stopping problem
	\begin{align*}
		\sup_{\vec\tau^{[m]}\in\bar{\mathcal T}_N^{[m]}}
			E[X_{\vec\tau^{[m]}}],
			\qquad
			X_{\vec\tau^{[m]}}:=\sum_{j=1}^m X_{\tau^{[j]}},
	\end{align*}
	where $X=(X_n)_{n\in\mathbb N}$ is a nonnegative adapted process and
	$\bar{\mathcal T}_N^{[m]}$ is the admissible class of vectors of at most $m$
	stopping times introduced in Section~\ref{subsec:multiple-stopping}.  
Our first objective is to extend the martingale-system theory from one stopping time to the 
	multiple-stopping setting.  
Our second objective is to determine when the resulting optimal stopping times admit a monotone, 
	one-step look-ahead representation.

The main structural results are as follows.  
Under the integrability condition \eqref{A1}, the recursively augmented reward processes inherit the required
	integrability at every exercise level.  
This yields a recursive optimal vector of stopping times without a monotonicity assumption
	(Theorem~\ref{th22}).  
If the stopping problem is monotone at every level, the optimal vector is the corresponding 
	recursive vector of one-step look-ahead stopping times (Theorem~\ref{monotonetheorem2}).  
We show, however, that monotonicity of the original single-stopping problem does not 
	by itself imply levelwise monotonicity: a bounded finite-horizon counterexample is provided,
	in which the one-step look-ahead vector is strictly suboptimal.
We also derive a recursive formula for the one-step look-ahead functions
	(Corollary~\ref{corollary21}) and prove that levelwise monotonicity propagates
	when the single-level one-step look-ahead function is deterministic
	(Proposition~\ref{propdet}).
A statewise threshold structure does not by itself imply the pathwise
	monotonicity required in Theorem~\ref{monotonetheorem2};
	see Section~\ref{sec:markov}.

Section~\ref{sec:applications} applies the general theory to last-success problems, beginning
	with Bruss' classical odds problem~\cite{bruss2000} and its random-horizon extension, and then
	turning to Markov-dependent and Bayesian models.  
For independent Bernoulli trials we give an alternative Poisson--Bellman proof of the known 
	multiple-stopping lower bound.  
For a uniform random horizon, logarithmic time scaling leads to a Volterra free-boundary system, 
	threshold ratios converging to $e^{-\widehat C_k}$, and limiting winning probability
	$\sum_{k=1}^m \widehat C_k e^{-\widehat C_k}$.
For Markov-dependent trials, persistent success clusters rule out a universal analogue of the
	independent lower bound, whereas rare-entry and vanishing-persistence assumptions recover the same
	Poisson limits.  
Finally, with an unknown Bernoulli success probability, Bayesian updating destroys
	the required pathwise monotonicity in general; the optimal multiple-stopping rule is therefore
	obtained by backward induction rather than by a one-step look-ahead argument.

The paper thus has two complementary aims: to isolate the precise structural assumptions under
	which monotone multiple stopping reduces to recursive one-step look-ahead rules, and to show how this
	structure interacts with classical and nonclassical last-success models.

\section{Optimal Multiple Stopping}\label{sec:optimal-multiple-stopping}
\subsection{Single stopping}\label{subsec:single-stopping}
This subsection briefly reviews the standard theory of optimal stopping;
	it is included for completeness and to fix the notation.
Throughout, $\N=\{0,1,2,\ldots\}$.
Let $(\Omega, \F, P)$ be a complete probability space 
	with a filtration $\mathbb{F}=\{\F_n\}_{n\in \mathbb{N}}$, 
	where $\F_0$ is assumed to be $P$-trivial.
Let $X=(X_n)_{n\in\mathbb{N}}$ be a sequence of nonnegative 
	$\mathbb{F}$-adapted random variables satisfying, for some $p>1$, 
		\begin{align}\tag{A1}\label{A1}
		E\left[\left(\sup_{n\in\N}X_n\right)^p\right]<\infty.
	\end{align}
Throughout this section we write $\bar{X}=\sup_{n\in\N}X_n$.
The assumption \eqref{A1} implies in particular that $E[\bar{X}]<\infty$, 
	so that the family $(X_n)_{n\in\N}$ is uniformly integrable.
For the single-stopping problem of this subsection, the integrability
	of $\bar{X}$ alone would be sufficient.  
On the finite horizon considered in Sections~\ref{subsec:multiple-stopping}--\ref{sec:markov},
	integrability of $\bar X$ is also inherited by every levelwise reward
	sequence, but only with constants that grow with the horizon $N$.  
We retain the stronger $\mathcal{L}^p$ condition in \eqref{A1} because it yields
	levelwise bounds that are uniform in $N$; see Lemma~\ref{lem21} and
	\eqref{lem21uniform}.
The random variable $X_n$ is regarded as the reward obtained by stopping at time $n$.
Let $N\in\N$ be a fixed horizon; in the infinite-horizon case we set $N=+\infty$. 
Set $\F_{\infty}=\sigma(\cup_{n\in\N}\F_n)$ and 
	$X_\infty=\limsup_{n\to\infty}X_n$.
We treat the finite- and infinite-horizon stopping problems 
	simultaneously by assuming that $X_{\cdot}\equiv 0$ on $(N,\infty)$ when $N<\infty$. 
Denote by $\T$ the set of all $\mathbb{F}$-stopping times with
	values in $\mathbb N\cup\{\infty\}$.  If $N<\infty$, let
	$\T^N$ be the stopping times valued in $\{0,\ldots,N\}$; if
	$N=\infty$, set $\T^N:=\T$.  
For a given $\sigma\in\T^N$, let $\T_{\sigma,N}$, or $\T_\sigma$ for
	short, be the stopping times $\tau\in\T^N$ satisfying
	$\tau\ge\sigma$ a.s.  We write $\T_{n,N}:=\T_{\sigma,N}$
	when $\sigma\equiv n$, and $\T_{0,N}=\T^N$.
The optimal stopping problem is to evaluate the maximal expected reward
	$\sup_{\tau\in\T^N}E[X_\tau]$
	and to find, or to characterize, an optimal stopping time which attains the supremum.
The supremum of an uncountable family of random variables may fail 
	to be a random variable. 
We avoid this difficulty by introducing the family of
	random variables defined, for $0\le n\le N$ when $N<\infty$ and for
	$n\ge0$ when $N=\infty$, by
	\begin{align*}
		V_n=\esup_{\tau\in\T_{n,N}}E[X_\tau\mid\F_n].
	\end{align*}
	When $N<\infty$, set $V_n:=0$ for $n>N$.  
The essential supremum always exists; it is called the Snell envelope of $(X_n)_{n\in\N}$
	and is sometimes referred to as the value function. 
From the integrability condition \eqref{A1}, the random variable $\bar{X}$ 
	is integrable and $X_\tau\leq \bar{X}$ a.s.\ for every $\tau \in\T_{0,N}$, 
	where we use the convention $X_\infty=\limsup_{n\to\infty}X_n\le \bar{X}$. 
With the convention $V_\infty:=X_\infty$, it follows that 
	$V_\tau\leq E[\bar{X}\mid\F_\tau]$ a.s.\ for every 
	$\tau \in\T_{0,N}$, 
	which proves that the Snell envelope $(V_n)_{n\in\N}$ 
	is bounded in $\mathcal{L}^1$ and, being dominated by the uniformly 
	integrable family $(E[\bar{X}\mid\F_n])_{n\in\N}$, is itself uniformly integrable. 
By the same domination, the family $(V_\tau)_{\tau\in\T_{0,N}}$ is also uniformly integrable. 	
Under assumption \eqref{A1}, the theory of optimal stopping 
	(see, e.g., Chapter~VI of Neveu~\cite{neveu}) yields the following properties. 
\begin{enumerate}
	\item[(a)] $V_n$ satisfies the dynamic programming equation (optimality equation, Bellman equation), 
		\begin{align*}
			V_n=\max\{X_n,E[V_{n+1}\mid\F_n]\}\quad\text{a.s.},
		\end{align*}
		for $0\le n<N$ when $N<\infty$, and for every $n\ge0$ when
		$N=\infty$, with terminal condition $V_N=X_N$ in the finite-horizon case.
	\item[(b)] For every $n$, $E[V_n]=\sup_{\tau\in\T_{n,N}}E[X_\tau]$.
	\item[(c)] $(V_n)_{n\in\N}$ is the smallest supermartingale 
		dominating 
		$(X_n)_{n\in\N}$; moreover, it is uniformly integrable. Indeed, by \eqref{A1}, 
		$0\le V_n\leq E[\bar{X}\mid\F_n]$ for every $n\in\N$, 
		and the family $(E[\bar{X}\mid\F_n])_{n\in\N}$ is uniformly integrable. 
	\item[(d)] Let 
		\begin{align*}
			\tau^*=\min\{n\in\N:V_n=X_n\}, 
		\end{align*} 
		with the conventions $\min\emptyset=\infty$ and $V_\infty=X_\infty$.
The stopped sequence $(V_{\tau^*\land n})_{n\in\N}$ is a uniformly 
		integrable martingale. 
	\item[(e)] $\lim_{n\to\infty}V_n=\limsup_{n\to\infty}X_n$ a.s.
	\item[(f)] The stopping time $\tau^*$ is optimal for the single-stopping problem, 
		as follows from the next theorem.
\end{enumerate}
\begin{theorem}[Neveu~\cite{neveu}]
	\label{th21}
	The stopping time $\tau\in \T_{0,N}$ is optimal in the sense of 
	$E[X_{\tau}]=\sup_{\sigma\in\T_{0,N}}E[X_\sigma]$ if and only if 
	(i) $V_{\tau}=X_{\tau}$ a.s., and 
	(ii) the stopped sequence $(V_{\tau\land n})_{n\in\N}$ is a martingale.
\end{theorem}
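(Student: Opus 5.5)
We prove both directions by comparing $E[X_\tau]$ with $E[V_0]$ through the supermartingale property (c) of the Snell envelope, together with its uniform integrability. By (b) with $n=0$ and the triviality of $\F_0$, $V_0=\sup_{\sigma\in\T_{0,N}}E[X_\sigma]$ is a constant. So optimality of $\tau$ means exactly $E[X_\tau]=V_0$.

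The basic chain is
\begin{align*}
E[X_\tau]\le E[V_\tau]\le V_0 .
\end{align*}
The first inequality holds because $V\ge X$; at $\tau=\infty$ we use $V_\infty=X_\infty$, which is consistent by (e). For the second, $(V_{\tau\wedge n})_{n\in\N}$ is a supermartingale by optional stopping for bounded stopping times, so $E[V_{\tau\wedge n}]\le V_0$. This family is dominated by the uniformly integrable family $(E[\bar X\mid\F_{\tau\wedge n}])$. Letting $n\to\infty$, with $V_{\tau\wedge n}\to V_\tau$ a.s. (by (e) on $\{\tau=\infty\}$), uniform integrability gives $E[V_\tau]=\lim_n E[V_{\tau\wedge n}]\le V_0$. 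In the finite-horizon case the limit is trivial, since $\tau\le N$.

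Sufficiency. If (ii) holds, then $E[V_{\tau\wedge n}]=V_0$ for all $n$, and the uniform integrability limit above gives $E[V_\tau]=V_0$. Condition (i) then gives $E[X_\tau]=E[V_\tau]=V_0$, so $\tau$ is optimal.

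Necessity. Suppose $E[X_\tau]=V_0$. Then both inequalities in the chain are equalities.

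For (i): from $E[V_\tau-X_\tau]=0$ with $V_\tau-X_\tau\ge0$ we get $V_\tau=X_\tau$ a.s.

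For (ii): we must upgrade $E[V_\tau]=V_0$ to the martingale property of the stopped process. For each $n$, $(V_{\tau\wedge k})_{k\ge n}$ is a uniformly integrable supermartingale converging to $V_\tau$. Hence $V_{\tau\wedge n}\ge E[V_\tau\mid\F_{\tau\wedge n}]$, obtained by passing to the limit in the supermartingale inequality, again using uniform integrability. Taking expectations,
\begin{align*}
V_0\ge E[V_{\tau\wedge n}]\ge E[V_\tau]=V_0,
\end{align*}
so $E[V_{\tau\wedge n}]=V_0$ for every $n$. A supermartingale with constant expectation is a martingale: the nonnegative differences $V_{\tau\wedge n}-E[V_{\tau\wedge(n+1)}\mid\F_n]$ have zero mean and therefore vanish a.s. This gives (ii).

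The main obstacle is the passage to $n=\infty$ in the infinite-horizon case. It requires identifying $V_\tau$ on $\{\tau=\infty\}$ as the a.s. limit of $V_{\tau\wedge n}$, which is where (e) and the convention $V_\infty=X_\infty$ are used. It also requires justifying the limit $E[V_{\tau\wedge n}]\to E[V_\tau]$, and the conditional version of the supermartingale inequality, by uniform integrability. That uniform integrability comes from the domination $V_n\le E[\bar X\mid\F_n]$ stated in (c) under \eqref{A1}. Everything else is routine.
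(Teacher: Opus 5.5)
Your proof is correct and is essentially the standard argument in Neveu's Chapter~VI, to which the paper defers without reproducing a proof: squeeze $E[X_\tau]\le E[V_\tau]\le V_0$, using the domination $V_{\tau\wedge n}\le E[\bar X\mid\F_{\tau\wedge n}]$ for uniform integrability and (e) to identify $V_\tau$ on $\{\tau=\infty\}$. The conditional inequality $V_{\tau\wedge n}\ge E[V_\tau\mid\F_{\tau\wedge n}]$ in the necessity step is more than you need: $n\mapsto E[V_{\tau\wedge n}]$ is nonincreasing, starts at $V_0$, and converges to $E[V_\tau]=V_0$, so it is already constant.
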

For the proof, see Chapter~VI of Neveu~\cite{neveu}.
Although $\tau^*$ is characterized abstractly as the first time at which
	the Snell envelope meets the reward process, this characterization does not
	by itself provide an explicit stopping rule.
To obtain a practically useful rule, one must analyze the structure of the
	value process through the dynamic programming equation.
\subsection{Multiple stopping}\label{subsec:multiple-stopping}
Throughout Sections~\ref{subsec:multiple-stopping}--\ref{sec:markov}, 
	the horizon $N$ is finite and $1\le m\le N+1$.
The infinite-horizon multiple-stopping problem requires additional
	attainment conditions and is not treated here.
Let $\partial:=N+1$ be a cemetery time, set
$\F_{\partial}:=\F_N$, $X_{\partial}:=0$, and define the successor map
	\begin{align*}
		 s(n)=n+1\quad(0\le n\le N),\qquad s(\partial)=\partial.
	\end{align*}
On $\{0,1,\ldots,N,\partial\}$, write $a\prec_{\partial}b$ if either
	$a,b\le N$ and $a<b$, or $a\le N$ and $b=\partial$, or
	$a=b=\partial$.
Denote by $\bar{\T}^{[m]}_N$ the set of all stopping-time vectors
	$\vec\tau^{[m]}=(\tau^{[m]},\tau^{[m-1]},\ldots,\tau^{[1]})$ satisfying
	\begin{align*}
 		\tau^{[m]}\prec_{\partial}\tau^{[m-1]}\prec_{\partial}\cdots
 		\prec_{\partial}\tau^{[1]}.
	\end{align*}
Repeated occurrences of $\partial$ represent unused stopping rights.
The finite-horizon multiple-stopping problem is
	\begin{align*}
		\sup_{\vec\tau^{[m]}\in\bar{\T}^{[m]}_N}
		 E[X_{\vec\tau^{[m]}}],	\qquad
 			X_{\vec\tau^{[m]}}:=\sum_{i=1}^m X_{\tau^{[i]}}.
	\end{align*}
Throughout the article, the superscript $[i]$ denotes the number of
	remaining stopping rights.
Set $X_n^{[1]}=X_n$ and $X_{\partial}^{[i]}=V_{\partial}^{[i]}=0$ for every $i\ge1$.
The processes $X^{[i]}$ and $V^{[i]}$ are defined alternately in $i$:
	for $i=2,\ldots,m$ and $n=0,\ldots,N$, let
	\begin{align}\label{xnm}
     		X_n^{[i]}&=X_n+E[V_{s(n)}^{[i-1]}\mid\F_n],
	\end{align}
	the maximal expected total reward obtained by exercising one right at
	time $n$ with $i$ stopping rights remaining, and, for $i=1,\ldots,m$,
	\begin{align*}
     		V_n^{[i]}&=\esup_{\tau\in\T_{n,N}^{\partial}}
        		 E[X_\tau^{[i]}\mid\F_n],\qquad n=0,\ldots,N,
	\end{align*}
	where $\T_{n,N}^{\partial}$ is the set of stopping times taking values in
	$\{n,n+1,\ldots,N,\partial\}$.
We also use $X^{[0]}_\cdot\equiv V^{[0]}_\cdot\equiv0$.
Since the rewards are nonnegative, induction on $i$ gives
	$X_n^{[i]}\ge X_n^{[i-1]}$ and $V_n^{[i]}\ge V_n^{[i-1]}$ for all $n$.

\begin{lemma}\label{lem21}
If the reward process $(X_n)_{n\in\N}$ satisfies \eqref{A1} for some $p>1$,
	then, for each $i=1,\ldots,m$,
	\begin{align*}
 		E\left[\left(\max_{0\le n\le N}X_n^{[i]}\right)^p\right]<\infty.
	\end{align*}
Thus every reward sequence $(X_n^{[i]})_{0\le n\le N}$, $i=1,\ldots,m$, satisfies the same
	$\mathcal L^p$-integrability condition on the finite horizon.
\end{lemma}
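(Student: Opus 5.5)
We argue by induction on $i$, aiming at a bound that is uniform in $N$, consistent with the reference to \eqref{lem21uniform}. The natural target is the pathwise estimate
\begin{align*}
 \max_{0\le n\le N}X_n^{[i]}\le \bar X+\sum_{j=1}^{i-1}\max_{0\le n\le N}E\bigl[(j)\,\bar X\mid\F_n\bigr],
\end{align*}
or more simply $\max_n X_n^{[i]}\le \bar X+(i-1)\max_{0\le n\le N}E[\bar X\mid\F_n]$. From this bound Doob's $\mathcal L^p$ maximal inequality (the reason $p>1$ is needed) finishes the proof.

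\emph{Step 1 (domination of the value at each level).} We first show by induction on $i$ that $0\le V_n^{[i]}\le i\,E[\bar X\mid\F_n]$ for $0\le n\le N$, with the obvious convention at $\partial$. For $i=1$ this holds because $X_\tau\le\bar X$ for every $\tau\in\T^{\partial}_{n,N}$, since $X_\partial=0$. For the inductive step, assume $V^{[i-1]}_k\le (i-1)E[\bar X\mid\F_k]$ for all $k$. Then, using the tower property,
\[
X^{[i]}_n=X_n+E[V^{[i-1]}_{s(n)}\mid\F_n]\le \bar X+(i-1)E[\bar X\mid\F_n].
\]
For any $\tau\in\T^{\partial}_{n,N}$ we have $X^{[i]}_\tau\le \bar X+(i-1)E[\bar X\mid\F_\tau]$, with $X^{[i]}_\partial=0$. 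Conditioning on $\F_n$ and using optional sampling for the closed martingale $E[\bar X\mid\F_k]$ gives $E[X^{[i]}_\tau\mid\F_n]\le iE[\bar X\mid\F_n]$. Taking the essential supremum yields the claim. At $s(n)=\partial$ we use $V^{[i-1]}_\partial=0$.

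\emph{Step 2 (maximal bound).} By Step 1,
\[
\max_{n\le N}X^{[i]}_n\le \bar X+(i-1)M^*,\qquad M^*:=\max_{n\le N}E[\bar X\mid\F_n].
\]
Doob's inequality gives $\|M^*\|_p\le \frac{p}{p-1}\|\bar X\|_p$. Minkowski's inequality then gives
\[
\Bigl\|\max_n X^{[i]}_n\Bigr\|_p\le \Bigl(1+(i-1)\tfrac{p}{p-1}\Bigr)\|\bar X\|_p<\infty,
\]
which is uniform in $N$, as \eqref{lem21uniform} presumably records.

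\emph{Main obstacle.} The only delicate point is Step 1: the essential supremum over stopping times must be controlled, and the conditional expectation $E[\bar X\mid\F_\tau]$ must be handled at the random time $\tau$, including the cemetery value. This is where optional sampling for the uniformly integrable martingale $(E[\bar X\mid\F_k])_{k\le N}$ and the conventions $\F_\partial=\F_N$, $X^{[i]}_\partial=0$ enter. A crude bound by $\sum_n X_n$ would also give finiteness, but only with constants growing in $N$, so the martingale domination is the step worth getting right.
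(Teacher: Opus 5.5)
Your proof is correct, but the induction carries a different quantity from the paper's.

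The paper inducts on the $\mathcal L^p$ norm of the maximal reward. It dominates $V_n^{[i-1]}$ by the martingale $M_n=E[\bar X_N^{[i-1]}\mid\F_n]$ generated by the previous level's maximal reward. It therefore has to apply Doob's inequality afresh at every level, which gives $\|\bar X_N^{[i]}\|_p\le\|\bar X_N\|_p+\frac{p}{p-1}\|\bar X_N^{[i-1]}\|_p$ and hence the geometric bound $\sum_{k=0}^{i-1}(\frac{p}{p-1})^k\|\bar X\|_p$ in \eqref{lem21uniform}.

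You instead induct on the pointwise domination $V_n^{[i]}\le i\,E[\bar X\mid\F_n]$. This step uses only $E[\bar X]<\infty$ and the tower property; since $\F_n\subset\F_\tau$ for $\tau\ge n$, optional sampling is not actually needed there. You then invoke Doob's inequality once, for the single martingale $(E[\bar X\mid\F_n])_{n\le N}$, combined with conditional Jensen $\|E[\bar X\mid\F_N]\|_p\le\|\bar X\|_p$ (a step you should state explicitly).

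The result is the linear constant $1+(i-1)\frac{p}{p-1}$. It agrees with the paper's for $i\le 2$ and is strictly smaller for $i\ge3$, so your remark that this is what \eqref{lem21uniform} ``presumably records'' is slightly off: your bound is sharper. What each route buys:
\begin{itemize}
\item The paper's recursion is more modular: it needs only an $\mathcal L^p$ bound on the previous level, not the specific structure $V^{[i]}\le i\,E[\bar X\mid\F_\cdot]$.
\item Yours shows that all the $\mathcal L^p$ content sits in a single maximal inequality, with the induction itself purely $\mathcal L^1$.
\end{itemize}

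Your opening display, with the sum over $j$ inside the conditional expectation, is garbled. Drop it in favor of the ``more simply'' bound, which is the one you actually prove.
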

\begin{proof}
Let $\bar X_N=\max_{0\le n\le N}X_n$ and
	$\bar X_N^{[i]}=\max_{0\le n\le N}X_n^{[i]}$.
For $i=1$, $\bar X_N^{[1]}=\bar X_N\le\bar X$, so that
	$\|\bar X_N^{[1]}\|_p<\infty$ by \eqref{A1}, where $\|Y\|_p=(E[|Y|^p])^{1/p}$
	denotes the $\mathcal L^p$-norm.
Assume inductively that $\|\bar X_N^{[i-1]}\|_p<\infty$ and set
	$M_n=E[\bar X_N^{[i-1]}\mid\F_n]$, $n=0,\ldots,N$.
Since $X_\tau^{[i-1]}\le\bar X_N^{[i-1]}$ a.s.\ 
	for every $\tau\in\T_{n,N}^\partial$, we have $V_n^{[i-1]}\le M_n$ a.s.\ 
	for every $n\le N$; hence, by the martingale property of $(M_n)$, for $n<N$,
	\begin{align*}
		 E[V_{s(n)}^{[i-1]}\mid\F_n]\le E[M_{s(n)}\mid\F_n]=M_n,
	\end{align*}
	while for $n=N$ the left-hand side equals
	$E[V_{\partial}^{[i-1]}\mid\F_N]=0$.
Hence $X_n^{[i]}\le X_n+M_n$ for every $n\le N$
	(for $n=N$ the conditional expectation term vanishes and $M_N\ge0$),
	and taking the maximum over $n$ gives
	\begin{align*}
 		\bar X_N^{[i]}\le \bar X_N+\max_{0\le n\le N}M_n.
	\end{align*}
Minkowski's inequality then yields
	\begin{align*}
 		\|\bar X_N^{[i]}\|_p\le \|\bar X_N\|_p+\Bigl\|\max_{0\le n\le N}M_n\Bigr\|_p.
	\end{align*}
Since $\bar X_N^{[i-1]}$ is $\F_N$-measurable, $(M_n)_{n=0}^N$ is a
	nonnegative martingale with terminal value $M_N=\bar X_N^{[i-1]}$,
	and Doob's $\mathcal L^p$ maximal inequality gives
	\begin{align*}
 		\Bigl\|\max_{0\le n\le N}M_n\Bigr\|_p\le \frac{p}{p-1}\,\|M_N\|_p
 		=\frac{p}{p-1}\,\|\bar X_N^{[i-1]}\|_p.
	\end{align*}
Combining the two estimates, we obtain
	\begin{align*}
 		\|\bar X_N^{[i]}\|_p\le \|\bar X_N\|_p+\frac{p}{p-1}\|\bar X_N^{[i-1]}\|_p<\infty,
	\end{align*}
	where the right-hand side is finite by \eqref{A1} and the induction
	hypothesis.
The assertion follows by induction.
Since $\|\bar X_N\|_p\le\|\bar X\|_p$, iterating the last estimate
	gives, for $i=1,\ldots,m$,
	\begin{align}\label{lem21uniform}
		\|\bar X_N^{[i]}\|_p\le\sum_{k=0}^{i-1}\Bigl(\frac{p}{p-1}\Bigr)^k\|\bar X\|_p ,
	\end{align}
	a bound that does not depend on the horizon $N$.
\end{proof}
 
Lemma~\ref{lem21} allows the standard finite-horizon Snell-envelope
	results to be applied at every level $i=1,\ldots,m$.  
For a stopping time $\sigma$, let
	$\T_{\sigma}^{[i]}$ denote the set of vectors
	$\vec\tau^{[i]}=(\tau^{[i]},\ldots,\tau^{[1]})\in\bar\T_N^{[i]}$ such that
	$\tau^{[i]}\succeq_{\partial}\sigma$ a.s., where
	$\succeq_{\partial}$ is the reverse of $\prec_{\partial}$ with equality
	allowed.  
The levelwise conclusions are as follows.
\begin{enumerate}
	\item[(a')] For $n=0,\ldots,N$,
		\begin{align*}
			V_n^{[i]}&=\max\{X_n^{[i]},E[V_{s(n)}^{[i]}\mid\F_n]\}\quad\text{a.s.},
		\end{align*}
		with $V_{\partial}^{[i]}=0$.
	\item[(b')]
		$E[V_n^{[i]}]=\sup_{\vec\tau^{[i]}\in\T_n^{[i]}}E[X_{\vec\tau^{[i]}}]$; 
		the case $n=0$ is proved in Theorem~\ref{th22}, and the general case follows 
		by the same argument.
	\item[(c')] $(V_n^{[i]})_{0\le n\le N}$ is the smallest supermartingale
		dominating $(X_n^{[i]})_{0\le n\le N}$.
	\item[(d')] Define $\vec\tau_*^{[m]}=(\tau_*^{[m]},\ldots,\tau_*^{[1]})$
		recursively by
		\begin{align*}
			\tau_*^{[m]}&=\min\{0\le n\le N:V_n^{[m]}=X_n^{[m]}\},\\
		 	\tau_*^{[i]}&=\begin{cases}
 					\min\{s(\tau_*^{[i+1]})\le n\le N:
      			 		V_n^{[i]}=X_n^{[i]}\},&\tau_*^{[i+1]}<\partial,\\
 					\partial,&\tau_*^{[i+1]}=\partial,
 				\end{cases}
 				\qquad i=m-1,\ldots,1,
		\end{align*}
		where the minimum of an empty set is $\partial$.
		Put $s(\tau_*^{[m+1]}):=0$.
		For every $i=m,\ldots,1$ and every finite
		$j\in\{0,1,\ldots,N\}$, the process
		\begin{align*}
			 \left(V_{\tau_*^{[i]}\wedge n}^{[i]}\mathbf1_{\{s(\tau_*^{[i+1]})=j\}}
 			\right)_{n=j,\ldots,N}
		\end{align*}
		is a martingale. 
		On the event $\{s(\tau_*^{[i+1]})=\partial\}$, one has
		$\tau_*^{[i]}=\partial$ and all subsequent rewards and values are zero.
	\item[(e')] At the terminal date, $V_N^{[i]}=X_N^{[i]}=X_N$ a.s.
	\item[(f')] The vector $\vec\tau_*^{[m]}$ belongs to
		$\bar\T_N^{[m]}$ and is optimal, as shown next.
	\end{enumerate}

\begin{theorem}\label{th22}
If $(X_n)_{n\in\N}$ satisfies \eqref{A1}, then, for every $m=1,\ldots,N+1$,
	\begin{align*}
 		V_0^{[m]}&=\sup_{\vec\tau^{[m]}\in\bar\T_N^{[m]}}E[X_{\vec\tau^{[m]}}]
 		=E[X_{\vec\tau_*^{[m]}}]=E\left[\sum_{i=1}^mX_{\tau_*^{[i]}}\right].
	\end{align*}
The vector of stopping times $\vec\tau_*^{[m]}$ is optimal.
\end{theorem}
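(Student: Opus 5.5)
The proof has two halves, both by induction on the number of rights $m$. First I show $V_0^{[m]}\ge E[X_{\vec\tau^{[m]}}]$ for every admissible vector. Then I show that $\vec\tau_*^{[m]}$ attains $V_0^{[m]}$. The base case $m=1$ is the single-stopping theory of Subsection~\ref{subsec:single-stopping}, applied on the finite horizon with the cemetery $\partial$ (where $X_\partial=0$, so stopping at $\partial$ is never better than any $\tau\in\T_{0,N}$). Lemma~\ref{lem21} is what licenses applying the finite-horizon Snell-envelope facts (a')--(e') at every level $i$: each $X^{[i]}$ is $\mathcal L^p$-dominated, so all conditional expectations are well defined and the families are uniformly integrable.

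For the upper bound, fix $\vec\tau^{[m]}=(\tau^{[m]},\ldots,\tau^{[1]})\in\bar\T_N^{[m]}$. The key step is to condition on $\F_{\tau^{[m]}}$ and prove
\begin{align*}
E\Bigl[\sum_{i=1}^{m-1}X_{\tau^{[i]}}\Bigm|\F_{\tau^{[m]}}\Bigr]\le E\bigl[V^{[m-1]}_{s(\tau^{[m]})}\bigm|\F_{\tau^{[m]}}\bigr].
\end{align*}
I do this by decomposing over the events $\{\tau^{[m]}=n\}$, $n\le N$ (on $\{\tau^{[m]}=\partial\}$ everything is zero). On each such event, $(\tau^{[m-1]},\ldots,\tau^{[1]})$ restricted there is an admissible $(m-1)$-vector starting at or after $n+1$. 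The conditional version of the induction hypothesis, property (b') at level $m-1$ in its conditional form $V^{[m-1]}_{n+1}=\esup E[\,\cdot\mid\F_{n+1}]$, then gives the bound after conditioning on $\F_{n+1}$ and then on $\F_n$. To keep this rigorous, I will prove the induction in the stronger conditional form: for every $n$ and every admissible vector in $\T_n^{[m]}$,
\begin{align*}
E[X_{\vec\tau^{[m]}}\mid\F_n]\le V_n^{[m]}.
\end{align*}
With that in hand, $E[X_{\vec\tau^{[m]}}]\le E[X^{[m]}_{\tau^{[m]}}]$ by \eqref{xnm}. Since $V^{[m]}$ is a supermartingale dominating $X^{[m]}$ (property (c')), optional sampling on the finite horizon gives $E[X^{[m]}_{\tau^{[m]}}]\le V_0^{[m]}$.

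For attainment, I use (d'). The martingale property of $V^{[m]}$ stopped at $\tau_*^{[m]}$, together with $V^{[m]}_{\tau_*^{[m]}}=X^{[m]}_{\tau_*^{[m]}}$ (guaranteed at $N$ by (e'), so $\tau_*^{[m]}\le N$), gives $V_0^{[m]}=E[X^{[m]}_{\tau_*^{[m]}}]=E[X_{\tau_*^{[m]}}]+E[V^{[m-1]}_{s(\tau_*^{[m]})}]$. I then iterate: on $\{s(\tau_*^{[i+1]})=j\}$, the martingale in (d') started at $j$ yields
\begin{align*}
E\bigl[V^{[i]}_j\mathbf1_{\{s(\tau_*^{[i+1]})=j\}}\bigr]=E\bigl[X^{[i]}_{\tau_*^{[i]}}\mathbf1_{\{s(\tau_*^{[i+1]})=j\}}\bigr].
\end{align*}
Summing over $j$ and telescoping down to $i=1$ gives $V_0^{[m]}=E\bigl[\sum_iX_{\tau_*^{[i]}}\bigr]$. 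Here I also need that $\vec\tau_*^{[m]}$ is admissible. This holds because each $\tau_*^{[i]}$ is a stopping time (a first entrance time after a stopping time), it satisfies $\tau_*^{[i]}\ge s(\tau_*^{[i+1]})$, and it equals $\partial$ only when the previous time was $N$ or $\partial$.

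The main obstacle is the conditional induction step in the upper bound. The later times $\tau^{[m-1]},\ldots$ are only stopping times on the event $\{\tau^{[m]}=n\}$, not genuine elements of $\T^{[m-1]}_{n+1}$. I would handle this by pasting: on $\{\tau^{[m]}=n\}$ use the given times, and off it use the constant $\partial$. This produces a legitimate vector in $\T^{[m-1]}_{n+1}$ because $\{\tau^{[m]}=n\}\in\F_n\subset\F_{n+1}$. The bound then follows from the lattice (upward-directed) property of the essential supremum and $\mathbf1_{\{\tau^{[m]}=n\}}$ being $\F_{n+1}$-measurable.
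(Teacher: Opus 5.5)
Your proposal is correct and follows the paper's proof in substance: the upper bound rests on the same inequality $E[X_{\vec\tau^{[m]}}\mid\F_{\tau^{[m]}}]\le X^{[m]}_{\tau^{[m]}}\le V^{[m]}_{\tau^{[m]}}$, and attainment is the same telescoping of the indicator-weighted martingale identities in (d'). The difference is only in how you get the middle step. You derive it from a conditional induction hypothesis at deterministic times, applied to the vector pasted with $\partial$ off $\{\tau^{[m]}=n\}$; for this direction you need only the defining inequality $E[X^{[i]}_\tau\mid\F_n]\le V^{[i]}_n$ for $\tau\in\T^{\partial}_{n,N}$, which also closes your induction at general $n$, and the lattice property of the essential supremum plays no role. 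The paper instead inducts along the given vector and applies optional sampling to the supermartingale $V^{[i-1]}$ between $s(\tau^{[i]})$ and $\tau^{[i-1]}$.
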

\begin{proof}
\textit{Step 1.}
Let $\vec\tau^{[m]}=(\tau^{[m]},\ldots,\tau^{[1]})\in\bar\T_N^{[m]}$.
Recall that $\F_{\partial}=\F_N$ and
	$X_{\partial}=X_{\partial}^{[i]}=V_{\partial}^{[i]}=0$.
We claim that, for $i=1,\ldots,m$,
	\begin{align}\label{step1claim}
 		E\left[\sum_{j=1}^{i}X_{\tau^{[j]}}\,\middle|\,\F_{\tau^{[i]}}\right]
 		\le V_{\tau^{[i]}}^{[i]}\quad\text{a.s.}
	\end{align}
For $i=1$, this is the Snell-envelope inequality
	$X_{\tau^{[1]}}\le V_{\tau^{[1]}}^{[1]}$, which also holds on
	$\{\tau^{[1]}=\partial\}$, where both sides vanish.
Suppose that \eqref{step1claim} holds for $i-1$.
The supermartingale property of $V^{[i-1]}$ extends to the cemetery
	step, since $E[V_{\partial}^{[i-1]}\mid\F_N]=0\le V_N^{[i-1]}$; hence
	optional sampling applies to stopping times with values in
	$\{0,\ldots,N,\partial\}$.
Because $\tau^{[i-1]}\succeq_{\partial}s(\tau^{[i]})$, the tower property, the
	induction hypothesis, and optional sampling give
	\begin{align*}
 		E\left[\sum_{j=1}^{i}X_{\tau^{[j]}}\,\middle|\,\F_{\tau^{[i]}}\right]
 		&\le X_{\tau^{[i]}}
   			+E[V_{\tau^{[i-1]}}^{[i-1]}\mid\F_{\tau^{[i]}}] \le X_{\tau^{[i]}}
   			+E[V_{s(\tau^{[i]})}^{[i-1]}\mid\F_{\tau^{[i]}}]\\
 		&=X_{\tau^{[i]}}^{[i]} \le V_{\tau^{[i]}}^{[i]},
	\end{align*}
	where every relation remains valid on $\{\tau^{[i]}=\partial\}$, on which
	all terms are zero because $\tau^{[j]}=\partial$ for every $j\le i$.
Taking $i=m$ in \eqref{step1claim}, taking expectations, and applying
	optional sampling to the supermartingale $V^{[m]}$ between $0$ and
	$\tau^{[m]}$, we obtain
	\begin{align*}
 		E[X_{\vec\tau^{[m]}}]\le E[V_{\tau^{[m]}}^{[m]}]\le V_0^{[m]}.
	\end{align*}

\textit{Step 2.}
For the first exercise time, the martingale property in (d') gives
	\begin{align*}
		 V_0^{[m]}=E[V_{\tau_*^{[m]}}^{[m]}]=E[X_{\tau_*^{[m]}}^{[m]}]
 			=E[X_{\tau_*^{[m]}}]+E[V_{s(\tau_*^{[m]})}^{[m-1]}].
	\end{align*}
For $i=m-1,\ldots,1$, sum the indicator-weighted martingale identities
	in (d') over the finite values of $s(\tau_*^{[i+1]})$.
The cemetery event contributes zero, and hence
	\begin{align*}
 		E[V_{s(\tau_*^{[i+1]})}^{[i]}]=E[V_{\tau_*^{[i]}}^{[i]}]
 			=E[X_{\tau_*^{[i]}}^{[i]}]
		 	=E[X_{\tau_*^{[i]}}]+E[V_{s(\tau_*^{[i]})}^{[i-1]}].
	\end{align*}
Iterating this identity and using $V^{[0]}\equiv0$ yields
	\begin{align*}
 		V_0^{[m]}=E\left[\sum_{i=1}^mX_{\tau_*^{[i]}}\right].
	\end{align*}
Together with Step~1, this proves the theorem.
\end{proof}
\subsection{Monotone multiple stopping}\label{subsec:monotone-multiple}
Let $A_n$ denote the event $\{X_n\ge E[X_{n+1}\mid\F_n]\}$. 
Following Chow, Robbins and Siegmund~\cite{chowrobbinssiegmund}, 
	we say that the single-stopping problem is \emph{monotone} if
	\begin{align*}
		A_0\subset A_1\subset A_2\subset \cdots \quad\text{a.s.}
	\end{align*}
Roughly speaking, monotonicity implies that once the reward process 
	$(X_n)_{n \in \N}$ acquires the one-step supermartingale inequality 
	at some time $n$, it retains it at all subsequent times.
Let $\tau_{\mathrm{ola}}$ denote the stopping time defined by 
	\begin{align*}
		\tau_{\mathrm{ola}}=\min\{n\in\N:X_n\ge E[X_{n+1}\mid\F_n]\},
	\end{align*} 
	with the convention $\min\emptyset=\infty$, 
	where $X_n$ is the stopping reward at time $n$ and
	$E[X_{n+1}\mid\F_n]$ is the expected reward if one were 
	to continue for one additional period and then stop at time $n+1$.
In this interpretation, $\tau_{\mathrm{ola}}$ is sometimes referred 
	to as the one-step look-ahead stopping time, or the monotone stopping time. 
The terms \emph{stopping rule} and \emph{strategy} are used when one
	emphasizes that the stopping time prescribes when to stop; for
	consistency, we use the term \emph{stopping time} throughout this article.

We emphasize that, even in the monotone case, the reward process 
	$(X_n)_{n\in\N}$ need not be a supermartingale globally in time; 
	monotonicity only guarantees the one-step supermartingale inequality 
	$X_n\ge E[X_{n+1}\mid\F_n]$ a.s.\ on the event $\{\tau_{\mathrm{ola}}\le n\}$, 
	for every $n\in\N$.
A cornerstone of the theory of monotone stopping, in both finite 
	and infinite horizons, is that if the single-stopping problem is monotone, then $\tau_{\mathrm{ola}}$ is optimal.

\begin{theorem}[Monotone case; cf.~\cite{chowrobbins,chowrobbinssiegmund}]\label{monotonetheorem}
	If the reward process $(X_n)_{n\in\N}$ satisfies \eqref{A1} and 
	the stopping problem is monotone, then $\tau_{\mathrm{ola}}$ is optimal.
\end{theorem}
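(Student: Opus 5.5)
The plan is to show that, under monotonicity, the stopping set of the Snell envelope coincides with the one-step look-ahead set, i.e.
\begin{align*}
	\{V_k=X_k\}=A_k \quad\text{a.s. for every } k,
\end{align*}
so that $\tau^*=\tau_{\mathrm{ola}}$. Optimality then follows from property (f) of the preceding subsection, i.e.\ from Theorem~\ref{th21} combined with (d). I would prove the two inclusions separately and treat the finite and infinite horizons together, using the convention $X\equiv0$ on $(N,\infty)$. In the finite-horizon case this convention gives $A_N=\Omega$, and $A_n=\Omega$ for all $n\ge N$, so monotonicity is consistent with the truncation.

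\emph{Inclusion $\{V_k=X_k\}\subset A_k$.} On $A_k^c$ we have, by the dynamic programming equation (a) and the domination $V_{k+1}\ge X_{k+1}$,
\begin{align*}
	X_k<E[X_{k+1}\mid\F_k]\le E[V_{k+1}\mid\F_k]\le V_k .
\end{align*}
Hence $V_k>X_k$ on $A_k^c$, which gives this inclusion.

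\emph{Inclusion $A_k\subset\{V_k=X_k\}$.} Fix $k$ and consider the process $Y_n:=X_n\mathbf 1_{A_k}$ for $n\ge k$. Because $A_k\in\F_k$ and $A_k\subset A_n$ for all $n\ge k$ by monotonicity,
\begin{align*}
	E[Y_{n+1}\mid\F_n]=\mathbf 1_{A_k}E[X_{n+1}\mid\F_n]\le \mathbf 1_{A_k}X_n=Y_n ,
\end{align*}
so $(Y_n)_{n\ge k}$ is a supermartingale. It is dominated by the integrable variable $\bar X$ from \eqref{A1}, hence uniformly integrable. For $\tau\in\T_{k,N}$, optional sampling then gives $\mathbf 1_{A_k}E[X_\tau\mid\F_k]\le \mathbf 1_{A_k}X_k$. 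Taking the essential supremum yields $V_k\le X_k$ on $A_k$, and therefore $V_k=X_k$ there. In the finite-horizon case one could instead run a backward induction from $V_N=X_N$: on $A_k\subset A_{k+1}$ one has $V_{k+1}=X_{k+1}$, so $E[V_{k+1}\mid\F_k]\le X_k$ on $A_k$. The supermartingale argument, however, covers both horizons at once.

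The step I expect to require care is the infinite-horizon optional sampling with $\tau=\infty$ allowed, where the reward is $X_\infty=\limsup X_n$. Here I would use that a uniformly integrable nonnegative supermartingale converges a.s.\ and in $\mathcal L^1$. Thus on $A_k$ the $\limsup$ is a genuine limit $Y_\infty$, and $(Y_n)_{k\le n\le\infty}$ is a closed supermartingale, so optional sampling holds for all $\tau\in\T_{k,N}$, including $\tau=\infty$. Once $\{V_k=X_k\}=A_k$ is established for every $k$, the first entrance times agree: $\tau^*=\min\{n:V_n=X_n\}=\tau_{\mathrm{ola}}$, with both equal to $\infty$ on the same event. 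Property (d) then says that the stopped process $V_{\tau^*\wedge n}$ is a martingale, and $V_{\tau^*}=X_{\tau^*}$ holds by construction, including at $\infty$ by (e). Theorem~\ref{th21} therefore shows that $\tau_{\mathrm{ola}}$ is optimal.
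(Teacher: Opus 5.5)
Your proof is correct and rests on the same two ingredients as the paper's. The first is the nonnegative supermartingale obtained by multiplying $X_n$ by an $\F_k$-measurable indicator of the look-ahead set: you use $\mathbf 1_{A_k}$ at a fixed time $k$, while the paper uses $\mathbf 1_{\{\tau_{\mathrm{ola}}=j\}}$ and patches over $j$. The second is the strict inequality $X_n<E[X_{n+1}\mid\F_n]\le E[V_{n+1}\mid\F_n]$ off that set. The only difference is packaging. You prove $\{V_k=X_k\}=A_k$ for every $k$, which is essentially the structure \eqref{V1} the paper derives afterwards, conclude $\tau_{\mathrm{ola}}=\tau^*$, and then borrow the general property (d). The paper instead verifies conditions (i) and (ii) of Theorem~\ref{th21} directly at $\tau_{\mathrm{ola}}$, without appealing to (d).
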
 
\begin{proof}
Let $\tau^{[1]}=\tau_{\mathrm{ola}}$ for short.
From Theorem \ref{th21}, if (i) $V_{\tau^{[1]}}=X_{\tau^{[1]}}$ a.s.\ and 
	(ii) $(V_{\tau^{[1]}\land n})_{n\in\N}$ is a martingale, 
	then $\tau^{[1]}$ is optimal.

We first show (i).
Fix $j\in\N$. 
By the definition of $\tau^{[1]}$ and the monotonicity assumption, 
	$\{\tau^{[1]}=j\}\subset A_j\subset A_n$ for every $n\ge j$, that is, 
	\begin{align*}
				X_n\ge E[X_{n+1}\mid\F_n] \quad\text{a.s.\ on }\{\tau^{[1]}=j\}
		\ \text{for every }n\ge j.
	\end{align*}
Since $\{\tau^{[1]}=j\}\in\F_j\subset\F_n$, multiplying the inequality by 
	$\mathbf{1}_{\{\tau^{[1]}=j\}}$ shows that the process 
	$(X_n\mathbf{1}_{\{\tau^{[1]}=j\}})_{n\ge j}$ is a nonnegative 
	supermartingale.
By the optional sampling theorem for nonnegative supermartingales, 
	which applies to arbitrary stopping times 
	with the terminal value $X_\infty=\limsup_{n\to\infty}X_n$, 
	we have, for every $\tau\in\T$ with $\tau\ge\tau^{[1]}$ a.s., 
	(apply the theorem to the stopping time $\tau\vee j\ge j$, which
	coincides with $\tau$ on $\{\tau^{[1]}=j\})$,
	\begin{align*}
		E[X_\tau\mathbf{1}_{\{\tau^{[1]}=j\}}\mid\F_j]
				\le X_j\mathbf{1}_{\{\tau^{[1]}=j\}} \quad\text{a.s.}
	\end{align*}
Patching over $j\in\N$ yields $E[X_\tau\mid\F_{\tau^{[1]}}]\le X_{\tau^{[1]}}$ a.s.\ on $\{\tau^{[1]}<\infty\}$; 
	on $\{\tau^{[1]}=\infty\}$ we have $\tau=\infty$ and 
	$X_\tau=X_\infty=V_\infty$, so the inequality is trivial.
Hence, taking the essential supremum over $\tau\ge\tau^{[1]}$, 
	\begin{align*}
		V_{\tau^{[1]}}=\esup_{\tau\ge \tau^{[1]}}E[X_\tau\mid\F_{\tau^{[1]}}]
								\le X_{\tau^{[1]}}\quad\text{a.s.}
	\end{align*}
On the other hand, 
	$V_{\tau^{[1]}}\ge X_{\tau^{[1]}}$ a.s.\ by the definition of $V$, 
	which proves $V_{\tau^{[1]}}=X_{\tau^{[1]}}$ a.s.

We next show (ii). We have
	\begin{align*}
		E[V_{\tau^{[1]}\land (n+1)}\mid\F_n]
			&=\sum_{j=0}^nV_j\mathbf{1}_{\{\tau^{[1]}=j\}}
				+E[V_{n+1}\mathbf{1}_{\{\tau^{[1]}\ge n+1\}}\mid\F_n]\\
			&=\sum_{j=0}^nV_j\mathbf{1}_{\{\tau^{[1]}=j\}}
				+\mathbf{1}_{\{\tau^{[1]}\ge n+1\}}E[V_{n+1}\mid\F_n],
	\end{align*}
	where we use $\{\tau^{[1]}\ge n+1\}\in\F_n$. 
By the definition of $\tau^{[1]}$, we have 
	\begin{align*}
		X_n < E[X_{n+1}\mid\F_n] \quad\text{on the event }\{\tau^{[1]}\ge n+1\},
	\end{align*}
	and $E[X_{n+1}\mid\F_n]\le E[V_{n+1}\mid\F_n]$ because $X_{n+1}\le V_{n+1}$. 
Therefore, by the dynamic programming equation $V_n=\max\{X_n, E[V_{n+1}\mid\F_n]\}$, 
	we have $V_n=E[V_{n+1}\mid\F_n]$ on $\{\tau^{[1]}\ge n+1\}$. 
Hence, 
	\begin{align*}
		E[V_{\tau^{[1]}\land (n+1)}\mid\F_n]
			&=\sum_{j=0}^nV_j\mathbf{1}_{\{\tau^{[1]}=j\}}
				+\mathbf{1}_{\{\tau^{[1]}\ge n+1\}}V_n=V_{\tau^{[1]}\land n},
	\end{align*}
	which proves that the stopped sequence $(V_{\tau^{[1]}\land n})_{n\in\N}$ 
	is a martingale; it is uniformly integrable since 
	$0\le V_{\tau^{[1]}\land n}\le E[\bar{X}\mid\F_{\tau^{[1]}\land n}]$. 
\end{proof}

The argument used for (i) and (ii) above, applied at an arbitrary time 
	$n$ instead of at $\tau_{\mathrm{ola}}$, yields the following structure of 
	the value function in the monotone case: 
	\begin{align}\label{V1}
		V_{n}=
			\begin{cases}
						X_{n} & \text{a.s.\ on } \{\tau_{\mathrm{ola}}\le n\}, \\[10pt]
			E[V_{n+1}\mid\F_n] & \text{a.s.\ on } \{\tau_{\mathrm{ola}}> n\}. 
			\end{cases}
	\end{align}
Indeed, on $\{\tau_{\mathrm{ola}}\le n\}$ the one-step supermartingale inequality 
	holds from time $n$ onwards, so the argument of (i) gives $V_n=X_n$ 
	there, while on $\{\tau_{\mathrm{ola}}>n\}$ the argument of (ii) applies.

We now turn to the finite-horizon multiple-stopping problem of
	Section~\ref{subsec:multiple-stopping}.  
Set $G_{\partial}^{[i]}:=0$ and, for $n=0,\ldots,N$, define
	\begin{align}\label{Gnm}
	 	G_n^{[i]}:=X_n^{[i]}-E[X_{s(n)}^{[i]}\mid\F_n],
 		\qquad i=1,\ldots,m.
	\end{align}
Here $X_n^{[1]}=X_n$ and $G^{[0]}_\cdot\equiv0$.
The quantity $G_n^{[i]}$ is the difference between exercising at time
	$n$ and exercising at the next date $s(n)$.
Let
	\begin{align*}
 		A_n^{[i]}:=\{G_n^{[i]}\ge0\},\qquad
 		\tau_{\mathrm{ola}}^{[i]}:=\min\{0\le n\le N:G_n^{[i]}\ge0\}.
	\end{align*}
Since $G_N^{[i]}=X_N\ge0$, the defining set is nonempty.
We say that the problem is \emph{monotone at level $i$} if
	\begin{align*}
 		A_0^{[i]}\subset A_1^{[i]}\subset\cdots\subset A_N^{[i]}
 		\quad\text{a.s.},
	\end{align*}
	and \emph{monotone at every level} if this holds for every
	$i=1,\ldots,m$.
If it is monotone at level $i$, then
	$G_n^{[i]}\ge0$ on $\{\tau_{\mathrm{ola}}^{[i]}\le n\}$.
To transfer the single-stopping results, extend the level-$i$ reward by
	zero after $N$; stopping at any time greater than $N$ then has the same
	(zero) reward as stopping at $\partial$, so the extended problem has
	the same value process $(V_n^{[i]})_{0\le n\le N}$, and its one-step
	look-ahead events coincide with $A_n^{[i]}$ for $n\le N$ and equal
	$\Omega$ afterwards.
The structure \eqref{V1}, applied to this extended problem, yields
	\begin{align}\label{Vlevel}
	 	V_n^{[i]}=\begin{cases}
 			X_n^{[i]},&\text{a.s.\ on }\{\tau_{\mathrm{ola}}^{[i]}\le n\},\\[6pt]
 			E[V_{s(n)}^{[i]}\mid\F_n],&\text{a.s.\ on }\{\tau_{\mathrm{ola}}^{[i]}>n\},
 			\end{cases}
 			\qquad n=0,\ldots,N.
	\end{align}
The following fundamental theorem for the monotone multiple-stopping
	problem is simple but useful for determining an optimal vector of stopping times.		
\begin{theorem}\label{monotonetheorem2}
Suppose that $(X_n)_{n\in\N}$ satisfies \eqref{A1} and that the finite-horizon
	multiple-stopping problem is monotone at every level $i=1,\ldots,m$.
Define $\vec{\tau}_{\mathrm{ola}}^{[m]}=(\sigma_m,\ldots,\sigma_1)$ recursively by
	\begin{align*}
 		\sigma_m&=\tau_{\mathrm{ola}}^{[m]},\\
 		\sigma_i&=\min\{s(\sigma_{i+1})\le n\le N:G_n^{[i]}\ge0\},
 			\qquad i=m-1,\ldots,1,
	\end{align*}
	where the minimum of an empty set is $\partial$.
When $s(\sigma_{i+1})<\partial$, monotonicity gives
	$\sigma_i=\max\{s(\sigma_{i+1}),\tau_{\mathrm{ola}}^{[i]}\}$;
	when $s(\sigma_{i+1})=\partial$, one has $\sigma_i=\partial$.
Then $\vec{\tau}_{\mathrm{ola}}^{[m]}$ is optimal.
\end{theorem}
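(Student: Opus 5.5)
The plan is to reduce the statement to Theorem~\ref{th22}: I will show that $\vec\tau_{\mathrm{ola}}^{[m]}$ satisfies the two properties used in Step~2 of that proof. These are, at each level $i$, (i) $V^{[i]}_{\sigma_i}=X^{[i]}_{\sigma_i}$ a.s., and (ii) the martingale property of $V^{[i]}$ stopped at $\sigma_i$, started at $s(\sigma_{i+1})$ (with $s(\sigma_{m+1}):=0$). Once both hold, the chain of identities in Step~2 goes through verbatim with $\tau_*^{[i]}$ replaced by $\sigma_i$:
\[
E[V^{[i]}_{s(\sigma_{i+1})}]=E[V^{[i]}_{\sigma_i}]=E[X_{\sigma_i}]+E[V^{[i-1]}_{s(\sigma_i)}].
\]
Iterating gives $V_0^{[m]}=E[\sum_i X_{\sigma_i}]$, and optimality then follows from Theorem~\ref{th22}. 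Admissibility, $\vec\tau_{\mathrm{ola}}^{[m]}\in\bar\T_N^{[m]}$, is immediate from the recursive definition: $\sigma_i\succeq_\partial s(\sigma_{i+1})$, and $\sigma_i=\partial$ whenever $\sigma_{i+1}=\partial$. Each $\sigma_i$ is a stopping time because $\{G_n^{[i]}\ge0\}\in\F_n$ and $s(\sigma_{i+1})$ is a stopping time.

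For (i), I first use monotonicity at level $i$ to identify $\sigma_i=\max\{s(\sigma_{i+1}),\tau^{[i]}_{\mathrm{ola}}\}$ on $\{s(\sigma_{i+1})\le N\}$, as noted in the statement. In particular $\sigma_i\ge\tau^{[i]}_{\mathrm{ola}}$ there. The representation \eqref{Vlevel} then gives $V^{[i]}_n=X^{[i]}_n$ a.s. on $\{\tau^{[i]}_{\mathrm{ola}}\le n\}$. Partitioning over $\{\sigma_i=n\}$, which is contained in $\{\tau^{[i]}_{\mathrm{ola}}\le n\}$, yields $V^{[i]}_{\sigma_i}=X^{[i]}_{\sigma_i}$ on $\{\sigma_i\le N\}$. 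On $\{\sigma_i=\partial\}$ both sides vanish.

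For (ii), I fix $j\le N$ and write $B_j=\{s(\sigma_{i+1})=j\}\in\F_j$. For $n\ge j$ I compute $E[V^{[i]}_{\sigma_i\wedge(n+1)}\mathbf 1_{B_j}\mid\F_n]$ exactly as in part (ii) of the proof of Theorem~\ref{monotonetheorem}. The event $B_j\cap\{\sigma_i\ge n+1\}$ lies in $\F_n$, and on it $\sigma_i>n\ge j$ forces $G^{[i]}_n<0$, hence $\tau^{[i]}_{\mathrm{ola}}>n$. The second case of \eqref{Vlevel} then gives $V^{[i]}_n=E[V^{[i]}_{n+1}\mid\F_n]$ on this event, and the martingale identity follows. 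At $n=N$ the stopping time satisfies $\sigma_i\le N$ on $B_j$ because $G^{[i]}_N=X_N\ge0$, so the cemetery never enters on $B_j$. Integrability comes from Lemma~\ref{lem21}.

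The main obstacle is the identification $\{\sigma_i=n\}\subset\{\tau^{[i]}_{\mathrm{ola}}\le n\}$ together with the converse statement on $\{\sigma_i>n\}$. This is the only place where monotonicity at the \emph{level $i$} (rather than level one) is used. I will handle it by noting that $\{\tau^{[i]}_{\mathrm{ola}}\le n\}=\bigcup_{k\le n}A^{[i]}_k=A^{[i]}_n$ a.s. under the nested inclusions. Hence $G^{[i]}_n\ge0$ is a.s. equivalent to $\tau^{[i]}_{\mathrm{ola}}\le n$, and both inclusions follow directly from the definition of $\sigma_i$.
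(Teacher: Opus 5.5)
Your proposal is correct and matches the paper's proof essentially step for step. Like the paper, you verify $V^{[i]}_{\sigma_i}=X^{[i]}_{\sigma_i}$ via \eqref{Vlevel}, and you obtain the indicator-weighted martingale property on $\{s(\sigma_{i+1})=j\}$ from the fact that $G^{[i]}_n<0$ forces $n<\tau^{[i]}_{\mathrm{ola}}$ under level-$i$ monotonicity; you then rerun the telescoping of Step~2 together with the upper bound of Step~1 of Theorem~\ref{th22}. One small correction: level-$i$ monotonicity is also used through \eqref{Vlevel} itself, not only in that converse inclusion.
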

\begin{proof}
We verify the two properties used in Step~2 of the proof of Theorem~\ref{th22}.
First, if $\sigma_i<\partial$, then $\sigma_i\ge\tau_{\mathrm{ola}}^{[i]}$, and
	\eqref{Vlevel} gives $V_{\sigma_i}^{[i]}=X_{\sigma_i}^{[i]}$.
The same equality holds at $\partial$, since both sides are zero.
Second, fix a finite $j$ and consider the event $\{s(\sigma_{i+1})=j\}$.
For $j\le n<\sigma_i$, the definition of $\sigma_i$ gives
	$G_n^{[i]}<0$, whence $n<\tau_{\mathrm{ola}}^{[i]}$ by the monotonicity
	at level $i$, and hence $V_n^{[i]}=E[V_{n+1}^{[i]}\mid\F_n]$ by \eqref{Vlevel}.
Therefore
	\begin{align*}
		 \left(V_{\sigma_i\wedge n}^{[i]}
 		\mathbf1_{\{s(\sigma_{i+1})=j\}}\right)_{n=j,\ldots,N}
	\end{align*}
	is a martingale.
On $\{s(\sigma_{i+1})=\partial\}$, all subsequent stopping times and
	rewards are $\partial$ and zero.
The telescoping argument of Step~2 in Theorem~\ref{th22}, together with
	the upper bound in Step~1, proves optimality.
\end{proof}

The following example shows that, in contrast with what one might expect, 
	the monotonicity of the single-stopping problem alone does \emph{not}  
	imply the monotonicity of the multiple-stopping problem at higher 
	levels, and that the one-step look-ahead vector may then be
	strictly suboptimal; this is why the levelwise assumption is imposed in 
	Theorem \ref{monotonetheorem2}.
\begin{example}\label{counterexample}
Let $N=4$ and let $\xi$ be a random variable with
	$P(\xi=H)=P(\xi=T)=1/2$, revealed at time $2$;
	that is, $\F_0=\F_1=\{\emptyset,\Omega\}$ (up to null sets) and
	$\F_n=\sigma(\xi)$ for $n\ge 2$.
Define the nonnegative bounded rewards by $X_0=0$, $X_1=1$, and
	\begin{align*}
	(X_2,X_3,X_4)=
		\begin{cases}
		(3,\ 1,\ 0) & \text{on } \{\xi=H\}, \\
		(0,\ 4,\ 5) & \text{on } \{\xi=T\}.
		\end{cases}
	\end{align*}
Then $G^{[1]}_0=-1$, $G^{[1]}_1=-1/2$,
	$(G^{[1]}_2,G^{[1]}_3,G^{[1]}_4)=(2,1,0)$ on $\{\xi=H\}$ and
	$(-4,-1,5)$ on $\{\xi=T\}$,
	so the single-stopping problem is monotone, with
	$\tau_{\mathrm{ola}}^{[1]}=2$ on $\{\xi=H\}$ and $\tau_{\mathrm{ola}}^{[1]}=4$ on
	$\{\xi=T\}$.
The level-one Snell envelope is
	$(V^{[1]}_2,V^{[1]}_3,V^{[1]}_4)=(3,1,0)$ on $\{\xi=H\}$,
	$(5,5,5)$ on $\{\xi=T\}$, and $V^{[1]}_1=4$.
Hence $X^{[2]}_0=4$, $X^{[2]}_1=5$, and
	$(X^{[2]}_2,X^{[2]}_3,X^{[2]}_4)=(4,1,0)$ on $\{\xi=H\}$ and
	$(5,9,5)$ on $\{\xi=T\}$.
Therefore
	$G^{[2]}_1=5-(4+5)/2=1/2\ge0$, whereas
	$G^{[2]}_2=5-9=-4<0$ on $\{\xi=T\}$.
Hence $A^{[2]}_1=\Omega\not\subset A^{[2]}_2$, and the double-stopping problem fails to be monotone.
The failure is not merely formal.  Since $G^{[2]}_0=-1<0$, the recursively
	defined vector of Theorem~\ref{monotonetheorem2} selects at time $1$ and
	then at $\tau_{\mathrm{ola}}^{[1]}$, and its expected reward is
	$1+(3+5)/2=5$.  By contrast, $V^{[2]}_0=13/2$: it is optimal to pass
	time $1$ and to stop at times $2$ and $3$ on $\{\xi=H\}$ and at times $3$
	and $4$ on $\{\xi=T\}$, which gives $(4+9)/2=13/2$.  Thus the
	one-step look-ahead vector is strictly suboptimal.
The mechanism is that $G^{[2]}_1\ge 0$ may be produced by the conditional
	expectation of a large future value of $G^{[1]}$,
	whereas, on the realized path, the anticipated gain fails to
	materialize.
\end{example}

The next corollary provides a recursive equation for the one-step 
	look-ahead functions, which is the practical tool for verifying the 
	levelwise monotonicity required in Theorem \ref{monotonetheorem2}; 
	this is how the theorem is applied in Section~\ref{sec:applications}.
\begin{corollary}\label{corollary21}
Let $m\ge2$ and suppose that the problem is monotone at every level $i=1,\ldots,m-1$.
Then, for $n=0,\ldots,N-1$,
	\begin{align}\label{Gnm1}
 		G_n^{[m]}=G_n^{[1]}+E\left[G_{n+1}^{[m-1]}
 			\mathbf1_{\{\tau_{\mathrm{ola}}^{[m-1]}\le n+1\}}\mid\F_n\right],
	\end{align}
	and $G_N^{[m]}=G_N^{[1]}=X_N$.
\end{corollary}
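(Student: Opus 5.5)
The plan is a direct computation from the definitions \eqref{xnm} and \eqref{Gnm}, using the tower property once. The only structural input is the value-function representation \eqref{Vlevel} at level $m-1$, which is available because the problem is assumed monotone at level $m-1$. Monotonicity at the lower levels $1,\ldots,m-2$ is not used directly; it is only what one needs when iterating the corollary downward.

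First I dispose of the terminal date. Since $s(N)=\partial$ and $V^{[m-1]}_\partial=0$, definition \eqref{xnm} gives $X_N^{[m]}=X_N$. Because $X^{[m]}_\partial=0$, it follows that $G_N^{[m]}=X_N=G_N^{[1]}$.

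For $0\le n\le N-1$ we have $s(n)=n+1\le N$. Inserting \eqref{xnm} at times $n$ and $n+1$ into \eqref{Gnm} and applying the tower property to $E\bigl[E[V^{[m-1]}_{s(n+1)}\mid\F_{n+1}]\mid\F_n\bigr]$ gives
\begin{align*}
G_n^{[m]}=X_n-E[X_{n+1}\mid\F_n]
+E\Bigl[V^{[m-1]}_{n+1}-E\bigl[V^{[m-1]}_{s(n+1)}\mid\F_{n+1}\bigr]\Bigm|\F_n\Bigr].
\end{align*}
The first two terms are exactly $G_n^{[1]}$. It therefore suffices to show that, almost surely,
\begin{align*}
D_{n+1}:=V^{[m-1]}_{n+1}-E\bigl[V^{[m-1]}_{s(n+1)}\mid\F_{n+1}\bigr]
=G^{[m-1]}_{n+1}\mathbf1_{\{\tau_{\mathrm{ola}}^{[m-1]}\le n+1\}}.
\end{align*}

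To prove this identity I split according to the event $\{\tau_{\mathrm{ola}}^{[m-1]}\le n+1\}$, which belongs to $\F_{n+1}$.

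On the complement $\{\tau_{\mathrm{ola}}^{[m-1]}>n+1\}$, the second line of \eqref{Vlevel} gives $D_{n+1}=0$.

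On $\{\tau_{\mathrm{ola}}^{[m-1]}\le n+1\}$, suppose first that $n+1<N$. The first line of \eqref{Vlevel} gives $V^{[m-1]}_{n+1}=X^{[m-1]}_{n+1}$. Since this event is contained in $\{\tau_{\mathrm{ola}}^{[m-1]}\le n+2\}$, the same line also gives $V^{[m-1]}_{n+2}=X^{[m-1]}_{n+2}$ there. Because the event is $\F_{n+1}$-measurable, I can pull the indicator inside the conditional expectation, which yields
$$\mathbf1\,E[V^{[m-1]}_{n+2}\mid\F_{n+1}]=\mathbf1\,E[X^{[m-1]}_{n+2}\mid\F_{n+1}].$$
Hence $D_{n+1}=G^{[m-1]}_{n+1}$ on this event.

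If instead $n+1=N$, then $\tau_{\mathrm{ola}}^{[m-1]}\le N$ always holds and the indicator equals $1$. By (e') we have $V_N^{[m-1]}=X_N^{[m-1]}$, and $V_\partial^{[m-1]}=X_\partial^{[m-1]}=0$. So $D_N=X_N^{[m-1]}=G_N^{[m-1]}$, again as claimed. (When $m-1=1$ all of this is just \eqref{V1} for the zero-extended problem, so no separate case is needed.)

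Taking $E[\,\cdot\mid\F_n]$ of the identity for $D_{n+1}$ then gives \eqref{Gnm1}.

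The main point requiring care is the step on $\{\tau_{\mathrm{ola}}^{[m-1]}\le n+1\}$. There one must use monotonicity twice:
\begin{itemize}
\item once to get $V=X$ at time $n+1$;
\item once to get $V=X$ at time $n+2$, via the inclusion $\{\tau_{\mathrm{ola}}\le n+1\}\subset\{\tau_{\mathrm{ola}}\le n+2\}$.
\end{itemize}
The second use requires the indicator to be $\F_{n+1}$-measurable, so that replacing $V$ by $X$ commutes with the conditional expectation. Integrability of every term is guaranteed by Lemma~\ref{lem21}.
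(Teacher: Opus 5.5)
Your proof is correct and follows essentially the paper's argument: expand $G_n^{[m]}$ via \eqref{xnm}, \eqref{Gnm} and the tower property, then apply \eqref{Vlevel} at level $m-1$ separately on $\{\tau_{\mathrm{ola}}^{[m-1]}>n+1\}$ and on its complement, where you use the inclusion into $\{\tau_{\mathrm{ola}}^{[m-1]}\le n+2\}$ and the $\F_{n+1}$-measurability of the indicator. Your separate treatment of $n+1=N$ via (e') and the cemetery convention is a more explicit version of the paper's parenthetical remark, and your observation that only monotonicity at level $m-1$ is actually used holds for the paper's proof as well.
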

\begin{proof}
For $n<N$, equations \eqref{Gnm} and \eqref{xnm} give
	\begin{align*}
 		G_n^{[m]}&=G_n^{[1]}+E\left[V_{n+1}^{[m-1]}
 			-E[V_{n+2}^{[m-1]}\mid\F_{n+1}]\mid\F_n\right].
	\end{align*}
On $\{\tau_{\mathrm{ola}}^{[m-1]}>n+1\}$, \eqref{Vlevel} gives
	$V_{n+1}^{[m-1]}=E[V_{n+2}^{[m-1]}\mid\F_{n+1}]$, so the difference
	inside the conditional expectation vanishes there.
On $B:=\{\tau_{\mathrm{ola}}^{[m-1]}\le n+1\}\in\F_{n+1}$, we have
	$V_{n+1}^{[m-1]}=X_{n+1}^{[m-1]}$ by \eqref{Vlevel}; moreover,
	$B\subset\{\tau_{\mathrm{ola}}^{[m-1]}\le n+2\}$, so \eqref{Vlevel}
	also gives $V_{n+2}^{[m-1]}=X_{n+2}^{[m-1]}$ a.s.\ on $B$
	(for $n=N-1$, read $V_{N+1}^{[m-1]}=X_{N+1}^{[m-1]}=0$ at the
	cemetery), and hence
	$E[V_{n+2}^{[m-1]}\mid\F_{n+1}]
		=E[X_{n+2}^{[m-1]}\mid\F_{n+1}]$ a.s.\ on $B$.
The difference therefore equals $G_{n+1}^{[m-1]}$ on $B$, which proves
	\eqref{Gnm1}.  At $n=N$, all continuation values vanish, so
	$G_N^{[m]}=X_N$.
\end{proof}

\begin{remark}\label{remarkindicator}
Under the monotonicity at level $m-1$, one has
	$\{\tau_{\mathrm{ola}}^{[m-1]}\le n+1\}=A_{n+1}^{[m-1]}$, so that
	\eqref{Gnm1} may equivalently be written as
	\begin{align*}
	 	G_n^{[m]}=G_n^{[1]}+E\left[G_{n+1}^{[m-1]}
		 \mathbf1_{A_{n+1}^{[m-1]}}\mid\F_n\right].
	\end{align*}
This is the form in which the equation is applied to the embedded
	Markov chains (see the end of Section~\ref{sec:markov}) of 
	Section~\ref{sec:applications}, where the event $A_{n+1}^{[m-1]}$ is
	determined by the state reached at the next transition.
\end{remark}

This useful recursive equation for $G_n^{[m]}$, which determines 
	$A_n^{[m]}=\{G_n^{[m]}\ge 0 \}$, 
	was, to the best of our knowledge, first obtained by 
	Ano and Tamaki~\cite{anotamaki} 
	for solving variations of the secretary problem.
	
\begin{corollary}\label{corollary22}
Suppose that the multiple-stopping problem is monotone at every level 
	$i=1,\ldots,m$.
Then, for each $n=0,\ldots,N$, 
	$G^{[m]}_n\ge G^{[m-1]}_n\ge\cdots\ge G^{[1]}_n$ a.s., 
	and consequently 
  	\begin{align*}
   		\tau_{\mathrm{ola}}^{[m]}\le \tau_{\mathrm{ola}}^{[m-1]}\le \cdots 
				\le \tau_{\mathrm{ola}}^{[1]} \quad\text{a.s.}
  	\end{align*}
Moreover, $\tau_{\mathrm{ola}}^{[1]}\le N$ a.s., since $G^{[1]}_N=X_N\ge 0$.
\end{corollary}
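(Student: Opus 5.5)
We prove $G^{[i]}_n\ge G^{[i-1]}_n$ for each $i=2,\ldots,m$ and each $n$ by applying the recursive formula of Corollary~\ref{corollary21} at two consecutive levels and comparing the results. The ordering of the stopping times then follows from the definitions. The case $n=N$ is immediate, since $G^{[i]}_N=X_N$ for every $i\ge1$ by Corollary~\ref{corollary21} and \eqref{Gnm}.

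For $n<N$ and $i\ge2$, monotonicity holds at levels $1,\ldots,i-1$, so Corollary~\ref{corollary21} applied with $m$ replaced by $i$ gives
\begin{align*}
G^{[i]}_n=G^{[1]}_n+E\bigl[G^{[i-1]}_{n+1}\mathbf1_{A^{[i-1]}_{n+1}}\mid\F_n\bigr],
\end{align*}
written in the form of Remark~\ref{remarkindicator}. The integrand $G^{[i-1]}_{n+1}\mathbf1_{A^{[i-1]}_{n+1}}$ is nonnegative by the definition of $A^{[i-1]}_{n+1}$, so $G^{[i]}_n\ge G^{[1]}_n$. This already settles $i=2$.

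For $i\ge3$ we proceed by induction on $i$, with the inductive claim that $G^{[i-1]}_k\ge G^{[i-2]}_k$ for all $k$. Subtracting the two instances of the formula gives
\begin{align*}
G^{[i]}_n-G^{[i-1]}_n=E\bigl[G^{[i-1]}_{n+1}\mathbf1_{A^{[i-1]}_{n+1}}-G^{[i-2]}_{n+1}\mathbf1_{A^{[i-2]}_{n+1}}\mid\F_n\bigr].
\end{align*}
The key pointwise inequality is $x^+\ge y^+$ whenever $x\ge y$. Here $G\mathbf1_{\{G\ge0\}}=G^+$, and the inductive claim gives $G^{[i-1]}_{n+1}\ge G^{[i-2]}_{n+1}$. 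Hence the integrand is nonnegative a.s., and so is its conditional expectation. This completes the induction and proves $G^{[m]}_n\ge\cdots\ge G^{[1]}_n$.

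For the stopping times, $G^{[i-1]}_n\ge0$ implies $G^{[i]}_n\ge0$, so $A^{[i-1]}_n\subset A^{[i]}_n$. Taking the first $n$ in these sets gives $\tau^{[i]}_{\mathrm{ola}}\le\tau^{[i-1]}_{\mathrm{ola}}$. Finally, $\tau^{[1]}_{\mathrm{ola}}\le N$ because $G^{[1]}_N=X_N\ge0$.

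The main point to check is that Corollary~\ref{corollary21} may be applied at every level $i\le m$. This requires monotonicity only at levels $1,\ldots,i-1$, which the hypothesis covers. Monotonicity at level $m$ itself is never used, except to interpret $\tau^{[m]}_{\mathrm{ola}}$.
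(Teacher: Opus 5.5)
Your proof is correct and follows essentially the same route as the paper: an induction on the level, applying Corollary~\ref{corollary21} at two consecutive levels and subtracting. The one difference is that the paper keeps the indicator $\mathbf 1_{\{\tau_{\mathrm{ola}}^{[i-1]}\le n+1\}}$ and splits it into $\{\tau_{\mathrm{ola}}^{[i-2]}\le n+1\}$ and $\{\tau_{\mathrm{ola}}^{[i-1]}\le n+1<\tau_{\mathrm{ola}}^{[i-2]}\}$, which first requires deriving $\tau_{\mathrm{ola}}^{[i-1]}\le\tau_{\mathrm{ola}}^{[i-2]}$ from the induction hypothesis. Your use of Remark~\ref{remarkindicator} instead turns each integrand into $(G^{[j]}_{n+1})^+$, so the comparison reduces to the monotonicity of $x\mapsto x^+$, which is slightly more economical.
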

\begin{proof}
We show $G^{[i]}_n\ge G^{[i-1]}_n$ a.s.\ for each $n$ by induction on 
	$i=2,\ldots,m$.
For $i=2$, from (\ref{Gnm1}), 
	$G_n^{[2]}-G_n^{[1]}
		=E[G_{n+1}^{[1]}\mathbf{1}_{\{\tau_{\mathrm{ola}}^{[1]}\le n+1\}}\mid\F_n]\ge 0$, 
	since $G^{[1]}_{n+1}\ge 0$ a.s.\ on $\{\tau_{\mathrm{ola}}^{[1]}\le n+1\}$ 
	by the monotonicity at level 1.
Assume that $G_n^{[i-1]}\ge G_n^{[i-2]}$ a.s.\ for each $n=0,\ldots,N$; 
	this implies $A^{[i-2]}_n\subset A^{[i-1]}_n$ and hence 
	$\tau_{\mathrm{ola}}^{[i-1]}\le \tau_{\mathrm{ola}}^{[i-2]}$ a.s. 
Then, using (\ref{Gnm1}) for levels $i$ and $i-1$, 
	\begin{align*}
		G_n^{[i]}-G_n^{[i-1]}
		&=E[G_{n+1}^{[i-1]}\mathbf{1}_{\{\tau_{\mathrm{ola}}^{[i-1]}\le n+1\}}\mid\F_n]
			-E[G_{n+1}^{[i-2]}\mathbf{1}_{\{\tau_{\mathrm{ola}}^{[i-2]}\le n+1\}}\mid\F_n]\\
		&=E[(G_{n+1}^{[i-1]}
			-G_{n+1}^{[i-2]})\mathbf{1}_{\{\tau_{\mathrm{ola}}^{[i-2]}\le n+1\}}\mid\F_n]
			+E[G_{n+1}^{[i-1]}
			\mathbf{1}_{\{\tau_{\mathrm{ola}}^{[i-1]}\le n+1<\tau_{\mathrm{ola}}^{[i-2]}\}}\mid\F_n]\\
		&\ge 0, 
	\end{align*}
	where the first term is nonnegative by the induction hypothesis and 
	the second is nonnegative since $G^{[i-1]}_{n+1}\ge 0$ a.s.\ on 
	$\{\tau_{\mathrm{ola}}^{[i-1]}\le n+1\}$ by the monotonicity at level $i-1$.
The a.s.\ ordering of the one-step look-ahead stopping times follows 
	immediately from the inclusions 
	$A^{[1]}_n\subset A^{[2]}_n\subset\cdots\subset A^{[m]}_n$.
\end{proof}

In some applications, the one-step look-ahead function $G^{[1]}_n$ is 
	deterministic as a function of the calendar-time index $n$ alone, 
	rather than merely a deterministic function of a Markov state.
In this case, the monotonicity of the single-stopping problem does 
	propagate to every level, as the following proposition shows, 
	so that the levelwise assumption of Theorem \ref{monotonetheorem2} 
	is automatically satisfied.
For a deterministic finite sequence $(G^{[i]}_n)_{0\le n\le N}$, 
	monotonicity at level $i$ means that 
	$G_n^{[i]}\ge 0$ implies $G_{n+1}^{[i]}\ge 0$.
In this case, $\tau_{\mathrm{ola}}^{[i]}$ reduces to the deterministic threshold 
	\begin{align*}
		i_*^{[i]}=\min\{0\le n\le N: G_n^{[i]}\ge 0\}. 
	\end{align*}
Each threshold value is recursively calculated using the 
	deterministic form of the equation (\ref{Gnm1}), 
	\begin{align}\label{Gnm1det}
		G_n^{[m]}=G_n^{[1]}+G_{n+1}^{[m-1]}\mathbf{1}_{\{n+1\ge i_*^{[m-1]}\}}.
	\end{align}
This provides further insight into the optimal multiple stopping time, 
	which is not offered by the form of 
	$\tau_*^{[m]} = \min\{0\le n\le N:V_n^{[m]}=X_n^{[m]}\}$ in (d').
\begin{proposition}\label{propdet}
Suppose that the reward process $(X_n)_{n\in\N}$ satisfies \eqref{A1}, 
	that the single-stopping problem is monotone, and that 
	$G^{[1]}_n$ is deterministic for every $n=0,\ldots,N$.
Then, for every $i=1,\ldots,m$, $G^{[i]}_n$ is deterministic for every 
	$n=0,\ldots,N$ and the multiple-stopping problem is monotone at level $i$.
Consequently, Theorem \ref{monotonetheorem2} applies.
\end{proposition}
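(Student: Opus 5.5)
The argument is an induction on $i$ that establishes two properties together: $G^{[i]}_n$ is deterministic for every $n$, and the deterministic sequence $(G^{[i]}_n)_{0\le n\le N}$ is monotone, meaning $G^{[i]}_n\ge0$ implies $G^{[i]}_{n+1}\ge0$. The case $i=1$ is exactly the hypothesis. For a deterministic sequence, this implication is the same as the inclusions $A^{[i]}_0\subset A^{[i]}_1\subset\cdots$, because each $A^{[i]}_n$ is then either $\Omega$ or $\emptyset$. For the inductive step, assume both properties hold at every level $1,\ldots,i-1$. Then the problem is monotone at levels $1,\ldots,i-1$, and Corollary~\ref{corollary21} applies with $m=i$:
\begin{align*}
G_n^{[i]}=G_n^{[1]}+E\left[G_{n+1}^{[i-1]}\mathbf1_{\{\tau_{\mathrm{ola}}^{[i-1]}\le n+1\}}\mid\F_n\right],\qquad G_N^{[i]}=X_N .
\end{align*}
The quantity $G^{[i-1]}_{n+1}$ is deterministic, and $\tau_{\mathrm{ola}}^{[i-1]}$ is the deterministic threshold $i_*^{[i-1]}$. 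The conditional expectation is therefore a constant, and we obtain the deterministic recursion \eqref{Gnm1det}. At $n=N$, we have $G^{[i]}_N=G^{[1]}_N$, which is deterministic by hypothesis; in particular $X_N$ is a.s.\ constant. This gives determinism at level $i$.

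Next comes monotonicity at level $i$. By the inductive hypothesis, $\mathbf 1_{\{n+1\ge i_*^{[i-1]}\}}G^{[i-1]}_{n+1}\ge0$ for all $n$. The recursion therefore gives $G^{[i]}_n\ge G^{[1]}_n$. I will prove the implication $G^{[i]}_n\ge 0\Rightarrow G^{[i]}_{n+1}\ge0$ by cases on the sign of $G^{[1]}_{n+1}$.

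If $G^{[1]}_{n+1}\ge0$, then $G^{[i]}_{n+1}\ge G^{[1]}_{n+1}\ge0$ immediately.

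If $G^{[1]}_{n+1}<0$, then single-level monotonicity propagates backwards and gives $G^{[1]}_k<0$ for all $k\le n+1$. Also $n+1<N$, because $G^{[1]}_N=X_N\ge0$. Now $G^{[i]}_n\ge0$ together with $G^{[1]}_n<0$ forces the added term to be positive. So $n+1\ge i_*^{[i-1]}$ and $G^{[i-1]}_{n+1}>0$. By level-$(i-1)$ monotonicity, $G^{[i-1]}_{n+2}\ge0$, and also $n+2\ge i_*^{[i-1]}$. The recursion then gives
\[
G^{[i]}_{n+1}=G^{[1]}_{n+1}+G^{[i-1]}_{n+2}.
\]
Both summands now have known signs but opposite ones, so this case is not yet closed.

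This second case is the main obstacle: we must show $G^{[1]}_{n+1}+G^{[i-1]}_{n+2}\ge0$. My first attempt will be to strengthen the induction hypothesis with a comparison between consecutive levels. The target statement is that for every level $j$ and every $n<N$ with $n+1\ge i_*^{[j]}$,
\[
G^{[j+1]}_n \le G^{[j]}_{n+1}+G^{[1]}_n ,
\]
or equivalently some monotonicity of $G^{[j]}$ in $n$ beyond the threshold. A natural route is to unroll \eqref{Gnm1det}. Beyond the threshold this gives $G^{[i-1]}_{n+2}=\sum_{k=n+2}^{n+i}G^{[1]}_k$, truncated at $N$. Then $G^{[i]}_n=\sum_{k=n}^{n+i-1}G^{[1]}_k$ over the active window, and $G^{[i]}_{n+1}$ is the same window shifted one step to the right. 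The shift drops $G^{[1]}_n<0$ and adds $G^{[1]}_{n+i}$. The added term is either nonnegative, or zero when the index passes $N$. If it is negative, then all terms in the window are negative by backward propagation, which contradicts $G^{[i]}_n\ge0$. Hence the shifted sum is at least the original sum, which is $\ge0$.

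To make this rigorous, I will first prove by induction the window formula
\[
G^{[i]}_n=\sum_{k=n}^{\min(n+r-1,N)}G^{[1]}_k ,
\]
where $r\le i$ is the number of consecutive levels whose thresholds are active. Carefully handling the threshold indicators in this unrolling is where most of the work will go. Once determinism and monotonicity hold at every level $i\le m$, Theorem~\ref{monotonetheorem2} applies directly.
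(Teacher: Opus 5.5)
\paragraph{There is a genuine gap in the monotonicity step.} Your determinism argument and your first case ($G^{[1]}_{n+1}\ge0$) are correct and match the paper. The second case ($G^{[1]}_{n+1}<0$) is the only nontrivial part of monotonicity at level $i$, and you leave it open.

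The strengthening you suggest does not help. The inequality $G^{[j+1]}_n\le G^{[j]}_{n+1}+G^{[1]}_n$ for $n+1\ge i_*^{[j]}$ is just \eqref{Gnm1det}, which holds there with equality, so it adds no information.

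The window sketch also does not close the case. It treats the window of $G^{[i]}_{n+1}$ as the window of $G^{[i]}_n$ shifted by one step. That presupposes every indicator in the unrolling is active, and you have not shown this; it would need something like $i_*^{[j]}\le i_*^{[1]}$ for all $j$, which you have not established. Once the window length $r=r(n)$ is allowed to depend on $n$, you must compare $r(n+1)$ with $r(n)$, and the proposal gives no such comparison.

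\paragraph{The missing idea is the level ordering.} Carry $G^{[i]}_n\ge G^{[i-1]}_n$ through the induction; this is assertion $(\beta)$ in the paper. For $i=2$ it is immediate from \eqref{Gnm1det}. For $i\ge3$ it follows from \eqref{Gnm1det} as in Corollary~\ref{corollary22}, using the ordering and monotonicity at levels $\le i-1$. The ordering at level $i-1$ gives $i_*^{[i-1]}\le i_*^{[i-2]}$. Then split into three cases: $n+1\ge i_*^{[i-2]}$, $i_*^{[i-1]}\le n+1<i_*^{[i-2]}$, and $n+1<i_*^{[i-1]}$.

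With the ordering, your stuck case closes at once. You already derived $n+1\ge i_*^{[i-1]}$ and $G^{[i-1]}_{n+1}>0$, so $G^{[i]}_{n+1}\ge G^{[i-1]}_{n+1}>0$.

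The paper itself splits instead on $n+1\ge i_*^{[i-1]}$ versus $n+1<i_*^{[i-1]}$. The second alternative is impossible: there $G^{[i]}_n=G^{[1]}_n\ge0$, which forces $n\ge i_*^{[1]}\ge i_*^{[i-1]}$.

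\paragraph{Your window route can also be completed, without any threshold ordering.}
\begin{itemize}
\item Each activation condition $n+k\ge i_*^{[i-k]}$ for time $n$ implies the condition $n+1+k\ge i_*^{[i-k]}$ for time $n+1$. Hence $r(n+1)\ge\min\{r(n),N-n\}$.
\item Therefore $G^{[i]}_{n+1}-G^{[i]}_n$ equals $-G^{[1]}_n$ plus the sum of $G^{[1]}_k$ over the extra indices $k\ge n+r(n)$, if there are any.
\item By level-one monotonicity, the extra terms are all nonnegative unless the first of them is negative.
\item If the first extra term were negative, every term in the window of $n$ would be negative, contradicting $G^{[i]}_n\ge0$.
\end{itemize}
Neither this argument nor the ordering argument appears in your proposal, so as written the monotonicity at level $i$ is not proved.
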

\begin{proof}
We show, by induction on $i=2,\ldots,m$, the following three assertions: 
	($\alpha$) $G^{[i]}_n$ is deterministic for every $n$; 
	($\beta$) $G^{[i]}_n\ge G^{[i-1]}_n$ for every $n$; 
	($\gamma$) the problem is monotone at level $i$.
Note that ($\beta$) implies $i_*^{[i]}\le i_*^{[i-1]}$, so that, once 
	the induction is closed up to level $i-1$, we have 
	$i_*^{[i-1]}\le i_*^{[i-2]}\le\cdots\le i_*^{[1]}$.
Assume the assertions hold up to level $i-1$; for $i=2$, level $1$ 
	satisfies ($\alpha$) and ($\gamma$) by hypothesis, and assertion 
	($\beta$) at level $1$ is never used in the argument below.
Since the problem is monotone at levels $1,\ldots,i-1$, 
	Corollary \ref{corollary21} applies, and since 
	$G^{[i-1]}_{n+1}$ and $i_*^{[i-1]}$ are deterministic, 
	the recursive equation (\ref{Gnm1}) reduces to the deterministic 
	identity (\ref{Gnm1det}) with $m$ replaced by $i$, which proves 
	($\alpha$).
The assertion ($\beta$) follows from (\ref{Gnm1det}) exactly as in 
	the proof of Corollary \ref{corollary22}, using the induction 
	hypotheses ($\beta$) and ($\gamma$) at levels $\le i-1$.
We prove ($\gamma$). 
Suppose $G^{[i]}_n\ge 0$; we show $G^{[i]}_{n+1}\ge 0$.
If $n+1\ge i_*^{[i-1]}$, then, by ($\beta$) and the monotonicity at 
	level $i-1$, 
	$G^{[i]}_{n+1}\ge G^{[i-1]}_{n+1}\ge 0$.
If $n+1<i_*^{[i-1]}$, then the indicator in (\ref{Gnm1det}) vanishes 
	and $G^{[i]}_n=G^{[1]}_n\ge 0$, whence $n\ge i_*^{[1]}$; 
	but $i_*^{[1]}\ge i_*^{[i-1]}$ by the ordering of the thresholds, 
	so $n+1>i_*^{[i-1]}$, a contradiction.
Hence the second case is vacuous, and the induction is complete.
\end{proof}
Under the hypotheses of Proposition~\ref{propdet}, the levelwise
	monotonicity yields the threshold structure
	$G^{[i]}_n\ge 0$ for all $n\ge i_*^{[i]}$, and
	Corollary~\ref{corollary22} gives
	$i_*^{[m]}\le i_*^{[m-1]}\le\cdots\le i_*^{[1]}\le N$.

Example \ref{counterexample} shows that the deterministic assumption of 
	Proposition \ref{propdet} cannot simply be dropped: there, 
	$G^{[1]}_2$ is random, and the propagation of the monotonicity 
	from level $1$ to level $2$ fails.
\subsection{Markovian case}\label{sec:markov}
Let $(Z_n)_{n=0}^N$ be a Markov chain with respect to
	$\F_n=\sigma(Z_0,\ldots,Z_n)$, started from a deterministic state $Z_0$
	(so that $\F_0$ is trivial), and let the basic reward be
	$X_n=g_n(Z_n)$, where each $g_n$ is nonnegative and measurable.
Assume \eqref{A1}.
In the Markovian case the Snell envelopes and the augmented rewards are
	functions of the current state, and we use the same symbol for the state
	function and for the random variable obtained by evaluating it at $Z_n$:
	\begin{align*}
 		V_n^{[i]}=V_n^{[i]}(Z_n),\qquad X_n^{[i]}=g_n^{[i]}(Z_n).
	\end{align*}
This convention is used throughout the applications in Section~\ref{sec:applications}, 
	where capital letters denote finite-horizon state functions and lower-case
	letters are reserved for their scaling limits.
Set $V_{\partial}^{[i]}\equiv0$, $g_n^{[1]}=g_n$, and, for $i=2,\ldots,m$,
	\begin{align*}
 		g_n^{[i]}(z)&=g_n(z)+E\! \left[V_{s(n)}^{[i-1]}(Z_{s(n)})\mid Z_n=z\right].
	\end{align*}
For $n=N$ one has $s(N)=\partial$, and every conditional expectation
	involving $Z_{s(N)}$ is read as zero; equivalently,
	$g_\partial^{[i]}\equiv V_\partial^{[i]}\equiv0$.
The Markov property and the dynamic programming principle imply
	\begin{align*}
 		V_n^{[i]}(z)=\max\left\{g_n^{[i]}(z),
 		E\left[V_{s(n)}^{[i]}(Z_{s(n)})\mid Z_n=z\right]\right\},\qquad n=0,\ldots,N.
	\end{align*}
The optimal stopping vector is therefore defined recursively by
	\begin{align*}
 		\tau_*^{[m]}&=\min\{0\le n\le N:
 			V_n^{[m]}(Z_n)=g_n^{[m]}(Z_n)\},\\
 		\tau_*^{[i]}&=\min\{s(\tau_*^{[i+1]})\le n\le N:
 			V_n^{[i]}(Z_n)=g_n^{[i]}(Z_n)\},\qquad i=m-1,\ldots,1,
	\end{align*}
	where an empty minimum is $\partial$, in accordance with the cemetery convention 
	of Section~\ref{subsec:multiple-stopping}.
Theorem~\ref{th22} applies to this vector.

For the monotone analysis, define the deterministic state function
	\begin{align*}
	 	\gamma_n^{[i]}(z):=g_n^{[i]}(z) -E[g_{s(n)}^{[i]}(Z_{s(n)})\mid Z_n=z],
	\end{align*}
	so that $G_n^{[i]}=\gamma_n^{[i]}(Z_n)$.
Theorem~\ref{monotonetheorem2} applies once the events
	$A_n^{[i]}=\{\gamma_n^{[i]}(Z_n)\ge0\}$ are a.s.\ nondecreasing in $n$
	at every level.
Corollary~\ref{corollary21} is the main recursive tool for checking this condition.
Proposition~\ref{propdet} applies only in the special case in which
	$G_n^{[1]}$ is deterministic as a function of the calendar-time index
	$n$ itself; the fact that $\gamma_n^{[1]}$ is a deterministic function
	of the state does not by itself imply that hypothesis.

\paragraph{Threshold structure versus pathwise monotonicity.}
Put
	\begin{align*}
   		 \mathcal A_n^{[i]}:=\{z:\gamma_n^{[i]}(z)\ge0\},
  		  \qquad\text{so that}\qquad
   		 A_n^{[i]}=\{Z_n\in\mathcal A_n^{[i]}\}.
	\end{align*}
If the state space is partially ordered, the problem has a
	\emph{statewise threshold structure} at level $i$ when every
	$\mathcal A_n^{[i]}$ is an upper set.  
For a real-valued state this holds,
	for instance, when $z\mapsto\gamma_n^{[i]}(z)$ crosses zero at most once,
	from negative to nonnegative, in which case $\mathcal A_n^{[i]}$ is a
	half-line determined by a scalar boundary.

By contrast, \emph{pathwise monotonicity} at level $i$ is the hypothesis of
	Theorem~\ref{monotonetheorem2}, namely
	\begin{align*}
    		P\bigl(Z_n\in\mathcal A_n^{[i]},\
    		Z_{n+1}\notin\mathcal A_{n+1}^{[i]}\bigr)=0,
    		\qquad n=0,\ldots,N-1.
	\end{align*}
This is a property of the transition kernel relative to the sets
	$\mathcal A_n^{[i]}$, not of the sets alone.
A simple sufficient condition, used for the success and record chains of
	Section~\ref{sec:applications}, is that $Z_n$ is almost surely nondecreasing
	and that
	\begin{align*}
    		\mathcal A_n^{[i]}\subset\mathcal A_{n+1}^{[i]}.
	\end{align*}
Then $Z_n\in\mathcal A_n^{[i]}$ implies
	$Z_{n+1}\in\mathcal A_n^{[i]}\subset\mathcal A_{n+1}^{[i]}$.
Without such a condition, nested or even time-independent thresholds do not
	prevent the state from moving back across the boundary.
Thus a statewise threshold structure does not by itself imply the pathwise
	monotonicity required in Theorem~\ref{monotonetheorem2}.

\paragraph{Embedded chains.}
In the applications, Sections~\ref{subsec:multiple-stopping}--\ref{sec:markov}
	are applied along an embedded chain, typically the sequence of success or
	record epochs, rather than in calendar time.
Let $(Z_k)_{k\ge0}$ be a Markov chain on $E\cup\{\partial\}$, where
	$\partial$ is absorbing, with reward $g(\partial)=0$.
If the chain is absorbed after at most $N$ transitions, the preceding
	finite-horizon results apply verbatim with filtration
	$\sigma(Z_0,\ldots,Z_k)$ and horizon $N$.
When the number of transitions is unbounded, only the single-stopping results
	of Section~\ref{subsec:single-stopping} are available.
Whether stopping can be restricted to the embedded epochs without loss of
	optimality is model specific and is verified in each application.

\section{Applications: Last-Success Problems}
\label{sec:applications}
This section applies the general multiple-stopping framework
	developed in Section~\ref{sec:optimal-multiple-stopping} to last-success selection problems.
	Throughout this section, ``odds problem'' refers to the classical independent-Bernoulli setting
	with known success probabilities and its direct multiple-selection and random-horizon extensions,
	whereas ``last-success problem'' refers to the broader class of models whose objective is to select
	the final success, including the Markov-dependent and Bayesian models considered below.  
We begin with Bruss' classical odds problem and then consider random horizons, Markov dependence, and
	Bayesian learning.
In each application, $\partial$ denotes a cemetery state placed after all
	admissible observation indices; when the index set is unbounded, we take $\partial=\infty$.  
A selection at $\partial$ has reward zero.
\subsection{The classical odds problem}
\label{subsec:odds-basic}
The classical odds problem introduced by Bruss~\cite{bruss2000} is a last-success problem for
	independent Bernoulli trials with known success probabilities.  
It contains the classical secretary problem as a special case and has generated many extensions.  
These include the last-$\ell$-success problems of Bruss and
	Paindaveine~\cite{brusspaindaveine} and Tamaki~\cite{tamaki2010},
the Markov-dependent trials studied by Hsiau and Yang~\cite{hsiauyang}, and the
	multiple-stopping odds problem of Ano, Kakinuma and Miyoshi~\cite{anokakinumamiyoshi}.
Matsui and Ano~\cite{matsuiano} established lower bounds for the
	multiple-stopping odds problem, while Matsui and Ano~\cite{matsuiano2017}
	studied related threshold rules through ratios of symmetric polynomials of the odds.

\subsubsection{Single stopping}
Let $I_1,\ldots,I_N$ be independent Bernoulli random variables, where
	$I_i=1$ denotes a success.  
Assume that $0<p_i:=P(I_i=1)<1$, write $q_i:=1-p_i$ and $r_i:=p_i/q_i$, 
	and put $\Na=\{1,\ldots,N\}$.
The objective is to stop at the last success.
If one stops at time $i$, the actual win indicator is
	\begin{align*}
 		Y_i=I_i\prod_{j=i+1}^N(1-I_j).
	\end{align*}
Its conditional expectation given the information at time $i$ is the
	adapted reward
	\begin{align*}
 		X_i:=E[Y_i\mid\F_i]=I_i\prod_{j=i+1}^Nq_j.
	\end{align*}
For every stopping time $\tau$, the tower property gives $E[Y_\tau]=E[X_\tau]$.
Thus maximizing the winning probability is equivalent to the optimal stopping
	problem with reward $(X_i)$.

On a failure, $X_i=0$, whereas continuation may have positive value.
Stopping on a failure can therefore be excluded without loss of optimality.  
The same holds with several selection opportunities: selecting a
	failure yields reward zero and uses up one opportunity, and since
	$V^{[\ell-1]}\le V^{[\ell]}$ at every state, rejecting a failure weakly
	dominates selecting it.

We thus pass to the embedded Markov chain whose
	state is the calendar time of the current success.  
At an embedded success state $i$, write
 	$x_i^{[1]}:=\prod_{j=i+1}^Nq_j$. 
The probability that the next success after $i$ occurs at $j>i$ is
	\begin{align}\label{oddstransition}
 		P_{ij}&=\prod_{k=i+1}^{j-1}q_k\,p_j=\prod_{k=i+1}^{j}q_k\,r_j.
	\end{align}
The kernel is substochastic; its defect is the probability that no
	further success occurs, in which case the continuation payoff is zero.
Let $V_i^{[1]}$ be the value at the embedded success state $i$.  
Then $V_N^{[1]}=x_N^{[1]}=1$ and
	\begin{align*}
 		V_i^{[1]}=\max\left\{x_i^{[1]},\sum_{j=i+1}^NP_{ij}V_j^{[1]}\right\},
 		\qquad i=1,\ldots,N-1.
	\end{align*}
The one-step look-ahead difference is
	\begin{align*}
		 G_i^{[1]}&:=x_i^{[1]}-\sum_{j=i+1}^NP_{ij}x_j^{[1]}=\prod_{j=i+1}^Nq_j
 			\left(1-\sum_{j=i+1}^Nr_j\right).
	\end{align*}
Put
	\begin{align*}
		 H_i^{[1]}:=\sum_{j=i+1}^Nr_j,\qquad
		 i_*^{[1]}:=\min\{i\in\Na:H_i^{[1]}\le1\}.
	\end{align*}
Since $H_i^{[1]}$ is nonincreasing in $i$, the embedded problem is monotone.  
By Theorem~\ref{monotonetheorem}, an optimal stopping time is
	 $\tau_{\mathrm{ola}}^{[1]}=\min\{i\ge i_*^{[1]}:I_i=1\}$,
	with the convention that an empty defining set gives $\partial$.
When $G_i^{[1]}=0$, stopping at $i$ is still optimal; we adopt the
	convention of stopping.

For the classical secretary problem, $p_i=1/i$ for $i\ge2$.
The deterministic first record, for which $p_1=1$, is treated separately
	(or equivalently omitted from the embedded odds calculation), so it does
	not conflict with the standing assumption $0<p_i<1$.

\subsubsection{Multiple stopping}
Suppose that $m$ selections are available.  
Since the selected success epochs are distinct, at most one selected epoch can be the last success.
Consequently, the corresponding win events are mutually exclusive, and
	the expected sum of the conditional rewards equals the probability that
	one of the selections identifies the last success.

At an embedded success state $i$, define recursively
	\begin{align*}
 		x_i^{[m]}&=x_i^{[1]}+\sum_{j=i+1}^NP_{ij}V_j^{[m-1]},\\
 		V_i^{[m]}&=\max\left\{x_i^{[m]},\sum_{j=i+1}^NP_{ij}V_j^{[m]}\right\},
 		\qquad V_N^{[m]}=1,
	\end{align*}
	and define
	\begin{align*}
 		G_i^{[m]}:=x_i^{[m]}-\sum_{j=i+1}^NP_{ij}x_j^{[m]}.
	\end{align*}
For integers $a,b$, write $a\lor b:=\max\{a,b\}$.

\begin{theorem}[Ano, Kakinuma and Miyoshi~\cite{anokakinumamiyoshi}]
\label{th31}
Set
	\begin{align*}
 		H_i^{[1]}=\sum_{j=i+1}^Nr_j,\qquad
		 i_*^{[1]}=\min\{i\in\Na:H_i^{[1]}\le1\}.
	\end{align*}
For $\ell=2,\ldots,m$, define recursively
	\begin{align}\label{Hrecursion}
	 	H_i^{[\ell]}&=H_i^{[1]}+\sum_{j=(i+1)\lor i_*^{[\ell-1]}}^N
 			r_j\bigl(H_j^{[\ell-1]}-1\bigr),\\
 		i_*^{[\ell]}&=\min\{i\in\Na:H_i^{[\ell]}\le1\}. \notag
	\end{align}
Then the vector $\vec{\tau}_{\mathrm{ola}}^{[m]}=(\tau_{\mathrm{ola}}^{[m]},\ldots,
	\tau_{\mathrm{ola}}^{[1]})$ defined by
	\begin{align*}
	 	\tau_{\mathrm{ola}}^{[m]}&=\min\{i\ge i_*^{[m]}:I_i=1\},\\
 		\tau_{\mathrm{ola}}^{[\ell]}
 			&=\min\{i\ge\max\{s(\tau_{\mathrm{ola}}^{[\ell+1]}),i_*^{[\ell]}\}: I_i=1\},
 		\qquad \ell=m-1,\ldots,1,
	\end{align*}
	is optimal.  
The minimum of an empty set is $\partial$.
\end{theorem}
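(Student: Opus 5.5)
The proof applies Theorem~\ref{monotonetheorem2} to the embedded success chain. The state is the calendar index $i$ of the current success, with cemetery $\partial$ and at most $N$ transitions, so the finite-horizon theory applies as noted at the end of Section~\ref{sec:markov}. The key is that the embedded one-step look-ahead function is a deterministic function of the state $i$, and the state increases strictly along the chain. Pathwise monotonicity at level $\ell$ therefore reduces to the statewise statement: $G_i^{[\ell]}\ge0$ implies $G_j^{[\ell]}\ge0$ for all $j>i$. This is the sufficient condition singled out in the paragraph on threshold structure versus pathwise monotonicity.

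I would prove by induction on $\ell$ the explicit formula
\begin{align*}
 G_i^{[\ell]}=\prod_{j=i+1}^Nq_j\bigl(1-H_i^{[\ell]}\bigr),
\end{align*}
together with two facts: $H_i^{[\ell]}$ is nonincreasing in $i$, and $i_*^{[\ell]}\le i_*^{[\ell-1]}$.

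\textbf{Level 1.} The formula is the computation already displayed. Monotonicity follows because $H^{[1]}_i$ is a tail sum of positive odds.

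\textbf{Inductive step.} Assume monotonicity at levels $1,\ldots,\ell-1$. Apply Corollary~\ref{corollary21} in the form of Remark~\ref{remarkindicator}, along the embedded chain and with the substochastic kernel $P_{ij}$, whose defect contributes zero:
\begin{align*}
 G_i^{[\ell]}=G_i^{[1]}+\sum_{j=i+1}^N P_{ij}\,G_j^{[\ell-1]}\mathbf 1_{\{j\ge i_*^{[\ell-1]}\}}.
\end{align*}
Insert $P_{ij}=\prod_{k=i+1}^jq_k\,r_j$ and the inductive formula $G_j^{[\ell-1]}=\prod_{k=j+1}^Nq_k(1-H_j^{[\ell-1]})$. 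The two products telescope to $\prod_{k=i+1}^Nq_k$, giving
\begin{align*}
 G_i^{[\ell]}=\prod_{k=i+1}^Nq_k\Bigl(1-H_i^{[1]}-\sum_{j=(i+1)\lor i_*^{[\ell-1]}}^N r_j(H_j^{[\ell-1]}-1)\Bigr).
\end{align*}
This is exactly the claimed formula with the recursion \eqref{Hrecursion}. Since the prefactor is positive, $A_i^{[\ell]}=\{H_i^{[\ell]}\le1\}$.

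\textbf{Main obstacle: monotonicity in $i$ of $H_i^{[\ell]}$.} Nonincreasingness would suffice, but I only need the threshold property, namely that $H_i^{[\ell]}\le1$ implies $H_{i+1}^{[\ell]}\le1$. I would argue as in the proof of Proposition~\ref{propdet}, whose deterministic-in-state setting is structurally identical here. Compare $H^{[\ell]}_i$ and $H^{[\ell]}_{i+1}$ in two cases.
\begin{itemize}
\item If $i+1\ge i_*^{[\ell-1]}$, then $H_i^{[\ell]}-H_{i+1}^{[\ell]}=r_{i+1}+r_{i+1}(H_{i+1}^{[\ell-1]}-1)=r_{i+1}H_{i+1}^{[\ell-1]}\ge0$.
\item If $i+1<i_*^{[\ell-1]}$, the difference is $r_{i+1}>0$.
\end{itemize}
So $H^{[\ell]}$ is nonincreasing in every case, and monotonicity at level $\ell$ follows.

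For the ordering of thresholds, use Corollary~\ref{corollary22}: $G^{[\ell]}\ge G^{[\ell-1]}$, or directly $H_i^{[\ell]}\le H_i^{[\ell-1]}$, which gives $i_*^{[\ell]}\le i_*^{[\ell-1]}$. An empty defining set is harmless, since $H_N^{[\ell]}=0\le1$.

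\textbf{Conclusion.} Monotonicity now holds at every level. Theorem~\ref{monotonetheorem2} yields optimality of the recursive one-step look-ahead vector. On the embedded chain this vector is $\sigma_\ell=\max\{s(\sigma_{\ell+1}),i_*^{[\ell]}\}$, interpreted as the first success at or after that index. In calendar time this is exactly the stated $\tau_{\mathrm{ola}}^{[\ell]}$, with $\partial$ when no such success exists.

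Finally, the reduction to selecting only at successes needs justification. It has already been argued in the text: rejecting a failure weakly dominates selecting it because $V^{[\ell-1]}\le V^{[\ell]}$. The embedded optimum therefore equals the calendar-time optimum.
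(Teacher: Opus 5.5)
Your proposal is correct and follows the paper's proof. Both arguments apply Corollary~\ref{corollary21} along the embedded success chain, telescope $P_{ij}$ to obtain the recursion \eqref{Hrecursion}, split on $i+1$ versus $i_*^{[\ell-1]}$ to get $H_i^{[\ell]}-H_{i+1}^{[\ell]}\in\{r_{i+1}H_{i+1}^{[\ell-1]},\,r_{i+1}\}$, and conclude with Theorem~\ref{monotonetheorem2}. The one step you leave implicit is $H_{i+1}^{[\ell-1]}\ge0$; it follows from your inductive nonincreasingness together with $H_N^{[\ell-1]}=0$ (empty sums), which is slightly more economical than the paper's separate nonnegativity check via its two-case representation.
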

\begin{proof}
For level one, $G_i^{[1]}=\prod_{j=i+1}^Nq_j(1-H_i^{[1]})$, and
	$H_i^{[1]}$ is nonnegative and nonincreasing; in particular, the
	embedded problem is monotone at level one.
Assume inductively that $H_i^{[m-1]}$ is nonnegative and nonincreasing in $i$, 
	so that the embedded problem is monotone at level
	$m-1$, with threshold $i_*^{[m-1]}$; in particular,
	$0\le H_j^{[m-1]}\le1$ for $j\ge i_*^{[m-1]}$.
Corollary~\ref{corollary21}, in the form of
	Remark~\ref{remarkindicator} and applied to the embedded Markov chain,
	gives
	\begin{align}\label{oddsgim}
	  	G_i^{[m]}=G_i^{[1]} +\sum_{j=(i+1)\lor i_*^{[m-1]}}^NP_{ij}G_j^{[m-1]}.
	\end{align}
Using \eqref{oddstransition} and writing
	$G_i^{[m]}=\prod_{j=i+1}^Nq_j(1-H_i^{[m]})$, equation
	\eqref{oddsgim} is exactly the recursion \eqref{Hrecursion}.
Equivalently,
	\begin{align*}
		 H_i^{[m]}=\begin{cases}
	 		\displaystyle\sum_{j=i+1}^Nr_jH_j^{[m-1]},
   				&i+1\ge i_*^{[m-1]},\\[8pt]
 			\displaystyle\sum_{j=i+1}^Nr_j+\sum_{j=i_*^{[m-1]}}^Nr_j(H_j^{[m-1]}-1),
   				&i+1<i_*^{[m-1]}.
 			\end{cases}
	\end{align*}
This representation shows that $H_i^{[m]}\ge0$: the first line is a
	sum of nonnegative terms, while in the second line, since
	$0\le H_j^{[m-1]}\le1$ for $j\ge i_*^{[m-1]}$ and $i_*^{[m-1]}\ge i+1$,
	\begin{align*}
 		\sum_{j=i_*^{[m-1]}}^Nr_j\bigl(1-H_j^{[m-1]}\bigr)
 		\le\sum_{j=i_*^{[m-1]}}^Nr_j \le\sum_{j=i+1}^Nr_j.
	\end{align*}
Moreover,
	\begin{align*}
 		H_i^{[m]}-H_{i+1}^{[m]}=
 		\begin{cases}
 			r_{i+1}H_{i+1}^{[m-1]},&i+1\ge i_*^{[m-1]},\\
 			r_{i+1},&i+1<i_*^{[m-1]},
 		\end{cases}
	\end{align*}
	(the case $i+1=i_*^{[m-1]}$ falls under the first line, and the
	case $i+2=i_*^{[m-1]}$ under the second).
Thus $H_i^{[m]}$ is nonnegative and nonincreasing, which closes the
	induction; the problem is monotone at every level $\ell=1,\ldots,m$.
Theorem~\ref{monotonetheorem2} then gives the stated recursively defined
	optimal vector.
\end{proof}

\begin{corollary}\label{corollary31}
The optimal thresholds satisfy
	\begin{align*}
		 1\le i_*^{[m]}\le i_*^{[m-1]}\le\cdots\le i_*^{[1]}\le N.
	\end{align*}
\end{corollary}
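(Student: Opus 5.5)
The ordering of the thresholds will follow from a pointwise comparison of the functions $H_i^{[\ell]}$: I will show, by induction on $\ell$, that $H_i^{[\ell]}\le H_i^{[\ell-1]}$ for every $i\in\Na$ and every $\ell=2,\ldots,m$. Since $i_*^{[\ell]}=\min\{i\in\Na:H_i^{[\ell]}\le1\}$, this inequality gives $\{i:H_i^{[\ell-1]}\le1\}\subset\{i:H_i^{[\ell]}\le1\}$, and hence $i_*^{[\ell]}\le i_*^{[\ell-1]}$. This is the embedded-chain analogue of Corollary~\ref{corollary22}. Indeed, $G_i^{[\ell]}=\prod_{j=i+1}^Nq_j(1-H_i^{[\ell]})$, so $H^{[\ell]}\le H^{[\ell-1]}$ is the same as $G^{[\ell]}\ge G^{[\ell-1]}$. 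Alternatively, one could invoke that corollary directly, since the proof of Theorem~\ref{th31} established monotonicity at every level.

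The outer bounds are immediate. Every $i_*^{[\ell]}$ is a minimum over $\Na$, so $i_*^{[m]}\ge1$. The set defining it is nonempty, because $H_N^{[\ell]}=0\le1$: the defining sums are empty at $i=N$, including the correction term in \eqref{Hrecursion}, since $(N+1)\lor i_*^{[\ell-1]}>N$. Hence $i_*^{[1]}\le N$.

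For the base step $\ell=2$, \eqref{Hrecursion} gives $H_i^{[2]}-H_i^{[1]}=\sum_{j\ge(i+1)\lor i_*^{[1]}}r_j(H_j^{[1]}-1)$. Each summand is $\le0$ because $H_j^{[1]}\le1$ for $j\ge i_*^{[1]}$, by the nonincreasing property of $H^{[1]}$ shown in Theorem~\ref{th31}.

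For the inductive step, assume $H^{[\ell-1]}\le H^{[\ell-2]}$ pointwise, so that $i_*^{[\ell-1]}\le i_*^{[\ell-2]}$. I will subtract the two recursions \eqref{Hrecursion} and split the range as in Corollary~\ref{corollary22}:
\begin{align*}
H_i^{[\ell]}-H_i^{[\ell-1]}
=\sum_{j\ge(i+1)\lor i_*^{[\ell-2]}}r_j\bigl(H_j^{[\ell-1]}-H_j^{[\ell-2]}\bigr)
+\sum_{(i+1)\lor i_*^{[\ell-1]}\le j<(i+1)\lor i_*^{[\ell-2]}}r_j\bigl(H_j^{[\ell-1]}-1\bigr).
\end{align*}
The first sum is $\le0$ by the induction hypothesis. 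The second is $\le0$ because $j\ge i_*^{[\ell-1]}$ and $H^{[\ell-1]}$ is nonincreasing, so $H_j^{[\ell-1]}\le1$. The only delicate point is this index bookkeeping with the $\lor$ in the lower limits. It is straightforward once the induction hypothesis $i_*^{[\ell-1]}\le i_*^{[\ell-2]}$ is carried along, since that guarantees the second range is well defined, possibly empty.
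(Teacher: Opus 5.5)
Your proof is correct and follows the paper's own argument. The paper also obtains $G_i^{[m]}\ge G_i^{[m-1]}$ by induction from \eqref{oddsgim}, using that the extra terms are nonnegative on the level-$(m-1)$ stopping region. That is exactly your comparison $H^{[\ell]}\le H^{[\ell-1]}$ after the rescaling $G_i^{[\ell]}=\prod_{j>i}q_j\,(1-H_i^{[\ell]})$. Your version just writes out the index splitting (as in Corollary~\ref{corollary22}) and the endpoint bounds $1\le i_*^{[m]}$ and $i_*^{[1]}\le N$, which the paper leaves implicit.
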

\begin{proof}
Equation~\eqref{oddsgim} and induction give $G_i^{[m]}\ge G_i^{[m-1]}$ for all $i$, 
	because the additional terms are
	nonnegative on the level-$(m-1)$ stopping region.
Hence the level-$m$ stopping region contains the level-$(m-1)$ stopping
	region, which yields $i_*^{[m]}\le i_*^{[m-1]}$.
\end{proof}

Starting from $H_i^{[1]}$, recursion \eqref{Hrecursion} determines all
	higher-level functions and thresholds.  
Let $P_N^{[m]}(\mathrm{win})$ denote the optimal winning probability with $m$
	selection opportunities.
Bruss~\cite{bruss2003} proved the lower bound for single stopping: 
 	$P_N^{[1]}(\mathrm{win})\ge e^{-1}$ whenever $\sum_{j=1}^Nr_j\ge1$. 
For $m=2$, Ano, Kakinuma and Miyoshi~\cite{anokakinumamiyoshi} obtained,
	under suitable conditions,
 	$\lim_{N\to\infty}P_N^{[2]}(\mathrm{win})=e^{-1}+e^{-3/2}$.

For completeness, we state explicitly the equality system used by Matsui and Ano.  
For $k\ge1$, let $\Xi_k$ be the set of vectors
	$b=(b_k,\ldots,b_1)\in\mathbb Z_+^k$ for which there exists
	$k^*\in\{1,\ldots,k\}$ satisfying
	\begin{align*}
 		\sum_{j=\ell}^kb_j&<1+k-\ell,\qquad \ell=k,k-1,\ldots,k^*+1,\\
 		\sum_{j=k^*}^kb_j&=1+k-k^*, \qquad b_{k^*-1}=\cdots=b_1=0.
	\end{align*}

\begin{theorem}[Matsui and Ano~\cite{matsuiano}]\label{th32}
There is a unique positive sequence $(\lambda_k)_{k\ge1}$ satisfying,
for every $k\ge1$,
	\begin{align}\label{equalitysystem}
 		\sum_{b\in\Xi_k}\prod_{\ell=1}^k\frac{\lambda_\ell^{b_\ell}}{b_\ell!}=1.
	\end{align}
In particular,
	$\lambda_1=1$,
	$\lambda_1+\lambda_2=3/2$, and
	$\lambda_1+\lambda_2+\lambda_3=47/24$.
If
	$\prod_{i=1}^Nq_i <\exp\{-\sum_{j=1}^m\lambda_j\}$, 
	then
	\begin{align}\label{boundsofwin}
		 P_N^{[m]}(\mathrm{win})\ge\sum_{k=1}^m
	 	\exp\left\{-\sum_{j=1}^k\lambda_j\right\}.
	\end{align}
For the classical secretary problem,
	$i_*^{[k]}/N\to\exp\{-\sum_{j=1}^k\lambda_j\}$, and the bound is
	asymptotically attained.
\end{theorem}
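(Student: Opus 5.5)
The plan is to pass to a Poisson limit, prove the inequality~\eqref{boundsofwin} at the level of the Bellman equations, and then transfer it back to the discrete problem. The abstract calls this approach Poisson--Bellman.

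First, reparametrize the embedded success chain by the remaining odds mass. For a success at $i$, set $t_i:=H_i^{[1]}=\sum_{j>i}r_j$. Since $\prod_{j>i}q_j\le e^{-t_i}$, a success at remaining mass $t$ has reward at least $e^{-t}$ times a correction factor, which we control below. Condition on the successes after $i$ and let the odds be infinitesimal. The future successes then form a Poisson process of unit intensity in the $t$-variable on $[0,t]$. In this limit, Theorem~\ref{th31} with recursion~\eqref{Hrecursion} becomes a system of Volterra-type equations for functions $h^{[\ell]}(t)$ with $h^{[1]}(t)=t$:
\[
h^{[\ell]}(t)=t+\int_0^{t\wedge c_{\ell-1}}\bigl(h^{[\ell-1]}(u)-1\bigr)\,du ,
\]
together with thresholds $c_\ell:=\sup\{t:h^{[\ell]}(t)\le 1\}$. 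We will show by induction that $c_\ell=\sum_{j\le\ell}\lambda_j$. To do this, expand $h^{[\ell]}(c_\ell)=1$ as a Poisson probability. The event that the $\ell$-th level stopping region is entered exactly once is a sum over occupation patterns $b\in\Xi_\ell$, where $b_j$ counts the successes in the band $(c_{j-1},c_j]$ of width $\lambda_j$. The product $\prod\lambda_\ell^{b_\ell}/b_\ell!$ is exactly the Poisson weight of such a pattern, up to the common factor $e^{-c_\ell}$. The ballot-type constraints defining $\Xi_k$ encode the condition that the level-$k$ selector has not yet been able to pass all its remaining rights. This identification gives existence and uniqueness in~\eqref{equalitysystem}: at each step the left side is strictly increasing in $\lambda_k$, goes from a value below $1$ to $\infty$, and the earlier $\lambda_j$ are already fixed. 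The quoted values $1,3/2,47/24$ then follow by direct evaluation.

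Second, in the Poisson model the one-step look-ahead vector is optimal; this is Theorem~\ref{monotonetheorem2}, via the continuous analogue of Proposition~\ref{propdet}. Its value is $\sum_{k=1}^m c_k e^{-c_k}$ with $c_k=\sum_{j\le k}\lambda_j$, and we can read off the winning probability of each right separately. For the discrete bound, we will not use the optimal rule. Instead we use the suboptimal threshold rule that stops with the $k$-th-from-last right at successes with $H_i^{[1]}\le c_k$. For this rule, the hypothesis $\prod q_i<e^{-c_m}$, together with the concavity estimate $\prod_{j>i}q_j\ge e^{-H_i}$ in the form used by Bruss~\cite{bruss2003}, gives the following. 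The number of successes in each band is stochastically compared to a Poisson count, and the probability of winning with the $k$-th right is at least $c_k e^{-c_k}$ minus terms that are nonnegative by monotonicity in the odds. The intended tool is a generating-function argument: the win probability is a symmetric function of the odds, minimized when the odds are spread infinitely thin.

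The hard step is this last comparison, namely showing that the discrete win probability of each band rule dominates its Poisson value whenever the total odds exceed $c_m$. I would first try Bruss's method for $m=1$, which minimizes $\prod q_j\sum r_j$ subject to a fixed $\sum r_j$. I would extend it bandwise, using the fact that the relevant symmetric polynomials of the odds satisfy Maclaurin-type inequalities, in the same spirit as Matsui--Ano~\cite{matsuiano2017}. If needed, I would split the odds at the band boundaries, accepting an overshoot at each boundary, which can only help.

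Finally, for the secretary problem we have $r_j=1/(j-1)$, so $H_i^{[1]}\approx\log(N/i)$. The discrete recursion~\eqref{Hrecursion} is then a Riemann sum for the Volterra system above, giving $i_*^{[k]}/N\to e^{-c_k}$ and attainment of the bound.
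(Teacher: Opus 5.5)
Your Volterra system is exactly \eqref{delta-recursion}, so the equality-system part is close to the paper. The discrete half of the argument, however, does not go through.

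First, the Poisson value you state is wrong. On $[C_m,\infty)$ the optimal value is $\sum_{k\le m}e^{-c_k}$, which is what \eqref{boundsofwin} asserts, not $\sum_k c_ke^{-c_k}$; the latter form belongs to the uniform random-horizon theorem. Only the first-used right wins with probability $c_me^{-c_m}$, namely when there is exactly one point in the last $c_m$ units of time. For $m=2$ the remaining right wins with probability $e^{-1}-\tfrac12e^{-3/2}<e^{-1}=c_1e^{-c_1}$. So your bandwise target ``the $k$th right wins with probability at least $c_ke^{-c_k}$'' is false. Moreover, $\sum_k c_ke^{-c_k}$ cannot be a lower bound at all, since the secretary limit is $e^{-1}+e^{-3/2}<e^{-1}+\tfrac32e^{-3/2}$.

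Second, the step you call hard, that the discrete win probability dominates the Poisson value, is only a list of candidate tools (symmetric functions of the odds, Maclaurin inequalities, splitting at band boundaries) with no argument. Your bands are also measured in odds, while the hypothesis controls $\prod_iq_i$. In odds-time a band of width $\lambda$ is empty with probability $\prod_jq_j\ge e^{-\lambda}$, since $\log(1+r)\le r$; this also means your first inequality $\prod_{j>i}q_j\le e^{-t_i}$ is reversed. You give no coupling with Poisson counts in this parametrization.

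The missing idea is the paper's Poisson-block embedding (Lemma~\ref{lemma:poisson-block-comparison}):
\begin{enumerate}
\item Take block lengths $\kappa_i=-\log q_i$, so that the occupancy indicators of consecutive blocks of a rate-one Poisson process have exactly the law of $(I_i)$.
\item Generate the configuration in $B_i$ sequentially from $I_i$ and an independent uniform.
\item Let the discrete rule select $i$ whenever the optimal Poisson rule selects a point of $B_i$.
\end{enumerate}
This randomized adapted rule uses at most $m$ selections and wins whenever the Poisson rule wins. Hence $P_N^{[m]}(\mathrm{win})\ge v^{[m]}(R)$ with $R=-\log\prod_iq_i$ for every $N$. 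The hypothesis gives $R>C_m$, which puts $R$ on the plateau $\sum_{k\le m}e^{-C_k}$. No specific discrete threshold rule and no symmetric-function inequality is needed, and this is why the hypothesis is phrased through $\prod_iq_i$.

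Several smaller gaps remain.
\begin{enumerate}
\item Optimality of the threshold rule in continuous time does not follow from the discrete Theorem~\ref{monotonetheorem2}. The paper solves the Bellman ODE \eqref{poisson-bellman-ode} and uses Gronwall for uniqueness.
\item The identity $h^{[k]}(c_k)=\sum_{b\in\Xi_k}\prod_\ell\lambda_\ell^{b_\ell}/b_\ell!$ is asserted only through a loose probabilistic reading. The paper proves it by integrating the triangular system \eqref{triangular-system} band by band and unrolling \eqref{interval-recursion}.
\item The $k$th equation is linear, $\lambda_k=1-F_k(\lambda_1,\ldots,\lambda_{k-1})$, so uniqueness is immediate. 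Positivity, however, needs $F_k=\Delta^{[k]}(C_{k-1})<1$, which you assume. The paper proves it from $\Delta^{[k]}<\Delta^{[k-1]}$ on $(0,C_{k-1}]$.
\item Asymptotic attainment in the secretary problem needs a matching upper bound on the optimal value, and threshold convergence alone does not give it. The paper gets this bound from the Bellman-convergence argument of Theorem~\ref{th:markov-poisson-odds}.
\end{enumerate}
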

\subsubsection{A Poisson--Bellman proof of Theorem~\ref{th32}}\label{sec:poisson-proof}
We give an alternative proof of Theorem~\ref{th32}. 
The argument is probabilistic and is based on a continuous-time last-arrival problem, its
	dynamic programming equation, and a Poisson-block comparison. 
The embedding is one-dimensional in time and uses only arrival epochs.

The idea of the reduction is as follows.  
Since the objective is to select the \emph{last} success, the only feature of the trial sequence
	that matters is the pattern of success epochs.  
Such a pattern can be generated by a rate-one Poisson process: a block of length
	$\kappa_i:=-\log q_i$ is empty with probability $e^{-\kappa_i}=q_i$, so the block
	occupancy indicators of consecutive blocks of lengths $\kappa_1,\ldots,\kappa_N$
	are independent and have exactly the law of $(I_i)$.  
The Bernoulli problem thus becomes a coarsened version of a continuous-time problem in
	which at most $m$ arrivals are selected and the aim is to select the
	last arrival on an interval of total length
	\begin{align*}
 		R=\sum_{i=1}^N \kappa_i=-\log\prod_{i=1}^N q_i .
	\end{align*}
Three facts then yield Theorem~\ref{th32}.  
First, the continuous-time problem is Markov in the remaining time alone, 
	so its dynamic programming equation
	is a scalar differential equation whose successive free boundaries
	$C_1<C_2<\cdots$ are determined uniquely
	(Lemmas~\ref{lemma:poisson-last-arrival}--\ref{lemma:poisson-boundaries}).
Second, iterated integration of the boundary condition
	$\Delta^{[k]}(C_k)=1$ reproduces the equality system \eqref{equalitysystem},
	which identifies $C_k=\lambda_1+\cdots+\lambda_k$
	(Lemma~\ref{lemma:poisson-equality-system}).  
Third, any Poisson strategy can be imitated by a randomized Bernoulli strategy that wins
	whenever the Poisson strategy wins, so the Bernoulli value dominates the
	Poisson value $v^{[m]}(R)$ (Lemma~\ref{lemma:poisson-block-comparison}).
Under the hypothesis $\prod_iq_i<e^{-C_m}$, one has $R>C_m$, where
	$v^{[m]}$ has already reached its plateau $\sum_{k\le m}e^{-C_k}$, and the
	lower bound follows.  Note that the embedding uses $-\log q_i$ rather
	than Bruss' odds $p_i/q_i$; the two agree asymptotically when
	$\max_ip_i\to0$, which is the regime of the secretary limit.

\begin{lemma}\label{lemma:poisson-last-arrival}
Let a rate-one Poisson process be observed on an interval of length $t$.
The decision maker may select at most $m$ arrival epochs and wins when the
	last arrival, if any, is among the selected epochs.  
Let $v^{[m]}(t)$ be the optimal winning probability before the observation begins, with the convention
	$v^{[0]}(t)=0$.  
Then $v^{[m]}(0)=0$ and
	\begin{align}
		 v^{[m]\prime}(t)=\left[e^{-t}+v^{[m-1]}(t)-v^{[m]}(t)\right]^+,\qquad t>0.                                   
		   \label{poisson-bellman-ode}
	\end{align}
At an arrival epoch with $t$ units of time remaining, it is optimal to
	select the arrival precisely when
	\begin{align}
		 e^{-t}+v^{[m-1]}(t)\ge v^{[m]}(t).                  
		   \label{poisson-stop-test}
	\end{align}
Equivalently, let the observation interval have length $T$, let
	$0<T_1<T_2<\cdots$ be the arrival epochs of the Poisson process, and let
	$\mathcal S^{[m]}:=\{t\ge0:\ e^{-t}+v^{[m-1]}(t)\ge v^{[m]}(t)\}$ be the level-$m$
	selection set in terms of the remaining time.  
Then the optimal $m$-selection rule is given recursively by
\begin{align*}
 	\tau^{[m]}&:=\inf\{T_j:\ T-T_j\in\mathcal S^{[m]}\},\\
 	\tau^{[k]}&:=\inf\{T_j>\tau^{[k+1]}:\ T-T_j\in\mathcal S^{[k]}\},\quad k=m-1,\ldots,1,                          
\end{align*}
	with the convention $\inf\emptyset=\infty$, where $\tau^{[k]}=\infty$ means
	that the corresponding right is never used.
\end{lemma}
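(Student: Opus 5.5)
The plan is to derive the Bellman equation by a first-step (infinitesimal) analysis in the remaining time $t$. The key structural fact is that the problem is Markov in $t$ alone: by the memoryless and independent-increments properties of the Poisson process, the future arrivals on the remaining interval form a rate-one Poisson process on an interval of length $t$, independent of the past. The past is also irrelevant to the payoff. A selected earlier arrival can only win if no further arrival occurs, and the decision maker's future choices do not affect whether later arrivals occur.

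The value with $k$ rights and $t$ time remaining, just before observation, is therefore a deterministic function $v^{[k]}(t)$. At an arrival with remaining time $t$, selecting it yields
\begin{align*}
 P(\text{no arrival in remaining } t)+v^{[m-1]}(t)=e^{-t}+v^{[m-1]}(t),
\end{align*}
since the events ``this arrival is last'' and ``a later selection is the last arrival'' are disjoint and their probabilities add. Skipping it yields $v^{[m]}(t)$. This gives the stop test \eqref{poisson-stop-test}, with ties broken toward selecting.

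Next, condition on the first arrival. With $t$ remaining, the first arrival occurs after elapsed time $s$ with density $e^{-s}$, or there is none, with payoff $0$. Hence
\begin{align*}
 v^{[m]}(t)=\int_0^t e^{-s}\max\bigl\{e^{-(t-s)}+v^{[m-1]}(t-s),\,v^{[m]}(t-s)\bigr\}\,ds .
\end{align*}
Substituting $u=t-s$ gives $e^{t}v^{[m]}(t)=\int_0^t e^{u}\max\{\cdots\}(u)\,du$. Differentiating, with continuity of the integrand obtained by induction on $m$ and from the integral equation itself, gives
\begin{align*}
 v^{[m]\prime}+v^{[m]}=\max\{e^{-t}+v^{[m-1]},\,v^{[m]}\},
\end{align*}
which is \eqref{poisson-bellman-ode}. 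The condition $v^{[m]}(0)=0$ is immediate.

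To justify the conditioning rigorously, I would discretize: cut $[0,t]$ into $n$ blocks of length $t/n$ and apply the finite-horizon multiple-stopping theory of Theorem~\ref{th22} on the embedded chain of arrival epochs. Alternatively, and more simply, I would verify directly that the integral equation has a unique bounded solution, by a Gronwall or contraction argument in sup norm, and that it is attained by the recursive rule. The optimal rule then takes the recursive form stated, in the same way as in (d') and Theorem~\ref{th22}: each right uses the selection set $\mathcal S^{[k]}$ from the epoch after the previous selection.

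The main obstacle is the optimality of the recursive rule over all strategies, including randomized or history-dependent ones, in continuous time. For this I would use a verification argument. Let
\begin{align*}
 M_s=v^{[k_s]}(T-s)+(\text{payoff of selections already made}),
\end{align*}
where $k_s$ is the number of rights still unused at time $s$. Using the ODE and the compensator of the Poisson process, $M_s$ is a supermartingale under any strategy, and a martingale under the stated rule; its value at $s=T$ is the win indicator.
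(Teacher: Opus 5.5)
Your proposal is correct and follows the paper's proof. The paper uses exactly your steps: the select/reject values $e^{-t}+v^{[m-1]}(t)$ and $v^{[m]}(t)$, the first-arrival equation $e^{t}v^{[m]}(t)=\int_0^t e^{u}\max\{e^{-u}+v^{[m-1]}(u),\,v^{[m]}(u)\}\,du$, differentiation, and Gronwall uniqueness.

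The one difference is that you replace the paper's one-line appeal to the principle of optimality (for the decision process observed at arrival epochs) with a compensator verification. That argument works provided the past-payoff term in $M_s$ is taken to be $e^{-(T-s)}$ times the indicator that the latest arrival in $[0,s]$ was selected. With that choice the indicator cancels from the compensated drift. The drift, evaluated at $t=T-s$, is then $-v^{[k]\prime}+(e^{-t}+v^{[k-1]}-v^{[k]})$ if you select and $-v^{[k]\prime}$ if you do not. Both are $\le0$ by the ODE, with equality under the stated rule.

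Drop the discretization alternative via Theorem~\ref{th22}. That theorem needs a finite horizon, and the arrival-epoch chain has unboundedly many transitions.
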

\begin{proof}
Let $(M_u)_{u\ge 0}$ denote a rate-one Poisson counting process, so that
	the number of arrivals in any interval of length $u$ is distributed as
	$M_u$ and $P(M_u=0)=e^{-u}$.
Observed at the arrival epochs, the decision problem is a Markov decision
	process whose state consists of the remaining time and the number of
	remaining rights; the rewards are bounded and the number of decision
	epochs is almost surely finite, so that the principle of optimality applies.
Suppose that the first arrival occurs when $s$ units of time remain.
If this arrival is selected, the decision maker wins in exactly one of
	two mutually exclusive ways: either no further arrival occurs in the
	remaining time, an event of probability $P(M_s=0)=e^{-s}$, or at least
	one further arrival occurs and the last of these later arrivals is selected
	using one of the remaining $m-1$ rights.
By the independence of the increments of the Poisson process, the optimal
	probability of the second alternative is $v^{[m-1]}(s)$, which already accounts for the
	probability $P(M_s\ge 1)=1-e^{-s}$ that a later arrival exists.
The value of selecting is therefore $e^{-s}+v^{[m-1]}(s)$.
The value of rejecting is $v^{[m]}(s)$. 

Let $U$ denote the waiting time until the first arrival.  
Since the arrival process is Poisson with unit rate, $U$ is exponentially
	distributed with $P(U\in du)=e^{-u}\,du$ and $P(U>t)=P(M_t=0)=e^{-t}$.
If $U>t$, no arrival occurs within the remaining time and the decision
	maker loses, so this event contributes nothing.  
If $U=u\le t$, the first arrival occurs when $s=t-u$ units of time remain, and the decision
	maker then acts optimally, obtaining
	$\max\{e^{-s}+v^{[m-1]}(s),\,v^{[m]}(s)\}$.
Conditioning on $U$ therefore gives
	\begin{align*}
 		v^{[m]}(t)&= E\bigl[\max\{e^{-(t-U)}+v^{[m-1]}(t-U),\,v^{[m]}(t-U)\}
    			 \,\mathbf 1_{\{U\le t\}}\bigr]\\
 			&= \int_0^t e^{-u}\max\{e^{-(t-u)}+v^{[m-1]}(t-u),\,v^{[m]}(t-u)\}\,du .
	\end{align*}
Substituting $s=t-u$, the remaining time at the first arrival, and noting
	that $e^{-u}=e^{-t}e^{s}$, we obtain
	\begin{align}
	 	v^{[m]}(t)=e^{-t}\int_0^t e^{s}\max\{e^{-s}+v^{[m-1]}(s),\,v^{[m]}(s)\}\,ds .
 		\label{poisson-first-arrival}
	\end{align}
Here $e^{-(t-s)}=e^{-t}e^{s}$ is the probability that no arrival occurs
	while the remaining time decreases from $t$ to $s$, so that the integrand
	is the probability density of the first arrival at remaining time $s$,
	weighted by the optimal value at that instant.

The right-hand side is absolutely continuous.  
Differentiating \eqref{poisson-first-arrival} yields
	\begin{align*}
	 	v^{[m]\prime}(t)&=-v^{[m]}(t)+\max\{e^{-t}+v^{[m-1]}(t),v^{[m]}(t)\}\\	 	
			&=\left[e^{-t}+v^{[m-1]}(t)-v^{[m]}(t)\right]^+,
	\end{align*}
	which proves both assertions.
For a given continuous $v^{[m-1]}$, the right-hand side of
	\eqref{poisson-bellman-ode} is Lipschitz in $v^{[m]}$ with constant one,
	uniformly in $t$.  Hence, as in Step~3 of the proof of
	Theorem~\ref{th:uniform-random-horizon-odds}, the Gronwall inequality
	applied to the difference of two solutions shows that
	\eqref{poisson-bellman-ode} with $v^{[m]}(0)=0$ has a unique solution on
	every compact interval.  
In particular, any function constructed to satisfy \eqref{poisson-bellman-ode}, 
	as in Lemma~\ref{lemma:poisson-boundaries}, coincides with the value function $v^{[m]}$.
\end{proof}

For the analysis of the free boundaries, set
	\begin{align*}
	 	w^{[m]}(t):=e^t v^{[m]}(t),\qquad
 		\Delta^{[m]}(t):=w^{[m]}(t)-w^{[m-1]}(t),\qquad w^{[0]}\equiv0.
	\end{align*}
The stopping advantage in \eqref{poisson-stop-test} can then be written as
	\begin{align*}
 		e^{-t}+v^{[m-1]}(t)-v^{[m]}(t)=e^{-t}\{1-\Delta^{[m]}(t)\}.                         
	\end{align*}
Since $e^{-t}>0$, the selection rule \eqref{poisson-stop-test} is
	equivalent to $\Delta^{[m]}(t)\le 1$. 
As shown in the next lemma, $\Delta^{[m]}$
	starts from $\Delta^{[m]}(0)=0$ and crosses the level one exactly once, so
	that the free boundary at level $m$ is the unique point $C_m$ at which
	$\Delta^{[m]}(C_m)=1$.

\begin{lemma}\label{lemma:poisson-boundaries}
There are unique constants $0=C_0<C_1<C_2<\cdots$
	such that the level-$m$ Poisson rule selects an arrival if and only if the
	remaining time is at most $C_m$.  
Moreover, if
 	$\lambda_m:=C_m-C_{m-1},$ 
	then $\lambda_m>0$ and 
	\begin{align}
 		\Delta^{[m]}(t)=t+\int_0^{t\wedge C_{m-1}}
      		 \{\Delta^{[m-1]}(s)-1\}\,ds,
		 \qquad 0\le t\le C_m,     \label{delta-recursion}
	\end{align}
	where $\Delta^{[0]}\equiv0$ is used only as a notational convention.  
For $t\ge C_m$, instead, $\Delta^{[m]}(t)=e^{t-C_m}$, 
	because both $v^{[m]}$ and $v^{[m-1]}$ are constant there. 
The boundary is characterized by
	\begin{align}
 		\Delta^{[m]}(C_m)=1.            \label{delta-boundary}
	\end{align}
Finally, the limiting Poisson winning probability
	$p^{[m]}:=\lim_{t\to\infty}v^{[m]}(t)$ satisfies
	\begin{align}
 		p^{[m]}=p^{[m-1]}+e^{-C_m}=\sum_{k=1}^m e^{-C_k}.                      
		 \label{poisson-value-sum}
	\end{align}
\end{lemma}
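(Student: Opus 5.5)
The plan is an induction on $m$ carried out on the transformed quantities $w^{[m]}=e^tv^{[m]}$ and $\Delta^{[m]}=w^{[m]}-w^{[m-1]}$. Multiplying \eqref{poisson-bellman-ode} by $e^t$ gives
\begin{align*}
 w^{[m]\prime}(t)=w^{[m]}(t)+\bigl[1+w^{[m-1]}(t)-w^{[m]}(t)\bigr]^+=w^{[m]}(t)+\bigl[1-\Delta^{[m]}(t)\bigr]^+ .
\end{align*}
Subtracting the same identity at level $m-1$ yields
\begin{align*}
 \Delta^{[m]\prime}=\Delta^{[m]}+[1-\Delta^{[m]}]^+-[1-\Delta^{[m-1]}]^+,\qquad \Delta^{[m]}(0)=0,
\end{align*}
with the convention $[1-\Delta^{[0]}]^+\equiv 1$ coming from $w^{[0]}\equiv0$ (at level $m-1=0$ the subtracted term is the derivative of $w^{[0]}$, which is $0$; so at level one $\Delta^{[1]\prime}=1$ while $\Delta^{[1]}\le1$). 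On any interval where $\Delta^{[m]}\le 1$ this reduces to $\Delta^{[m]\prime}=1-[1-\Delta^{[m-1]}]^+$.

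\emph{Induction hypothesis.} Assume that $\Delta^{[m-1]}$ increases strictly from $0$, crosses $1$ exactly at $C_{m-1}$, and satisfies $\Delta^{[m-1]}(t)=e^{t-C_{m-1}}>1$ for $t>C_{m-1}$. Then for $t\le C_{m-1}$ the equation is $\Delta^{[m]\prime}=\Delta^{[m-1]}$, and for $t\ge C_{m-1}$ it is $\Delta^{[m]\prime}=1$, as long as $\Delta^{[m]}\le1$. Integrating gives \eqref{delta-recursion}, since
\begin{align*}
 \int_0^t\min\{\Delta^{[m-1]},1\}\,ds=t+\int_0^{t\wedge C_{m-1}}(\Delta^{[m-1]}-1)\,ds .
\end{align*}

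\emph{Crossing time.} The right-hand side of \eqref{delta-recursion} is continuous and strictly increasing, because its derivative $\min\{\Delta^{[m-1]},1\}$ is positive for $t>0$. At $t=C_{m-1}$ it equals $\int_0^{C_{m-1}}\Delta^{[m-1]}\,ds<C_{m-1}\cdot 1$, using the strict monotonicity of $\Delta^{[m-1]}$ below $C_{m-1}$. More to the point, I will show $\int_0^{C_{m-1}}\Delta^{[m-1]}<1$. The cleanest route is to note that $\Delta^{[m]}\le\Delta^{[m-1]}$ on $[0,C_{m-1}]$, which I would prove by comparing derivatives inductively: $\Delta^{[m]\prime}=\Delta^{[m-1]}\le\Delta^{[m-1]\prime}$, and the latter holds since $\Delta^{[m-1]\prime}=\min(\Delta^{[m-2]},1)\ge\Delta^{[m-1]}$ by the previous level's comparison. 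This gives $\Delta^{[m]}(C_{m-1})\le 1$. Strictness comes from $\Delta^{[m-1]}(0)=0<$ its derivative there. After $C_{m-1}$, $\Delta^{[m]}$ grows with slope $1$, so there is a unique $C_m>C_{m-1}$ with $\Delta^{[m]}(C_m)=1$. This gives $\lambda_m>0$ and \eqref{delta-boundary}.

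\emph{Behaviour past the boundary.} Beyond $C_m$, the $[\cdot]^+$ term vanishes as long as $\Delta^{[m]}\ge1$, and $[1-\Delta^{[m-1]}]^+=0$ because $C_m>C_{m-1}$. So $\Delta^{[m]\prime}=\Delta^{[m]}$, which gives $\Delta^{[m]}=e^{t-C_m}>1$, consistent with the assumption; this closes the induction. In terms of $v$, for $t\ge C_m$ one has $v^{[m]\prime}=e^{-t}[1-\Delta^{[m]}]^+=0$, so $v^{[m]}$ and likewise $v^{[m-1]}$ are constant there. Since the selection test \eqref{poisson-stop-test} is equivalent to $\Delta^{[m]}\le1$, it holds iff $t\le C_m$; uniqueness of $C_m$ follows from the single crossing.

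\emph{Limiting value.} At $t=C_m$ we have $w^{[m]}-w^{[m-1]}=1$, so $v^{[m]}(C_m)=v^{[m-1]}(C_m)+e^{-C_m}$. Both functions are constant afterwards, hence $p^{[m]}=p^{[m-1]}+e^{-C_m}$, and iterating gives \eqref{poisson-value-sum}.

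Uniqueness of the ODE solution (Lemma~\ref{lemma:poisson-last-arrival}) justifies identifying the constructed function with $v^{[m]}$. The main obstacle is the strict inequality $\Delta^{[m]}(C_{m-1})<1$, i.e., the derivative comparison $\Delta^{[m]}\le\Delta^{[m-1]}$ on $[0,C_{m-1}]$. I would carry this along as part of the induction hypothesis, in the stronger form $\Delta^{[m]\prime}\ge\Delta^{[m]}$ on $[0,C_m]$.
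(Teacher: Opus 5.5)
Your plan follows the paper's proof nearly step for step. It uses the same passage to $w^{[m]}$ and $\Delta^{[m]}$, the same piecewise slopes ($\Delta^{[m-1]}$ before $C_{m-1}$, then $1$), and the same comparison $\Delta^{[m]}<\Delta^{[m-1]}$ on $(0,C_{m-1}]$, carried by the strengthened hypothesis $\Delta^{[m]\prime}\ge\Delta^{[m]}$. The value-matching argument for $p^{[m]}$ is also the same. Your single equation $\Delta^{[m]\prime}=\Delta^{[m]}+[1-\Delta^{[m]}]^+-[1-\Delta^{[m-1]}]^+$ is a tidy way to handle both regions and the uniqueness identification at once.

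The one step that fails as written is the strict inequality $\Delta^{[m]}(C_{m-1})<1$, which is exactly what gives $\lambda_m>0$ and $C_{m-1}<C_m$. You justify it by ``$\Delta^{[m-1]}(0)=0<$ its derivative there.'' That works only for $m=2$. For $m\ge3$ one has $\Delta^{[m-1]\prime}(0^+)=\Delta^{[m-2]}(0)=0$; for instance $\Delta^{[2]}(t)=t^2/2$ on $[0,1]$, so $\Delta^{[2]\prime}(0)=0$. Hence the integrand $\Delta^{[m-1]\prime}-\Delta^{[m-1]}$ of $\Delta^{[m-1]}-\Delta^{[m]}$ vanishes at $0$, and the non-strict comparison alone cannot exclude $\Delta^{[m]}(C_{m-1})=1$.

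The repair is immediate from your own hypotheses. On $(C_{m-2},C_{m-1})$ the slope $\Delta^{[m-1]\prime}$ equals $1$, while $\Delta^{[m-1]}<1$ there because it increases strictly and first reaches $1$ at $C_{m-1}$. So the integrand is strictly positive on a nondegenerate interval, which yields the strict inequality. Alternatively, carry the strict ordering $\Delta^{[m-1]}<\Delta^{[m-2]}$ on $(0,C_{m-2}]$ through the induction, as the paper does; the integrand is then also strictly positive on $(0,C_{m-2})$.

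There is also a small slip at level zero. The convention should be $[1-\Delta^{[0]}]^+\equiv0$, since $w^{[0]\prime}-w^{[0]}=0$; equivalently, level $0$ is already in its continuation region with $C_0=0$. With $\equiv1$, your reduced equation would give $\Delta^{[1]\prime}=0$ rather than the correct $\Delta^{[1]\prime}=1$ that you state.
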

\begin{proof}
For $m=1$, equation \eqref{poisson-bellman-ode} gives
	$v^{[1]\prime}(t)=e^{-t}-v^{[1]}(t)$ as long as stopping is preferred.  
Hence $v^{[1]}(t)=te^{-t}$ and $\Delta^{[1]}(t)=t$ up to the first contact with one.
Consequently, $C_1=1$.

We now proceed inductively.  
Before the level-$m$ boundary is reached,
	\eqref{poisson-bellman-ode} and the definition of $w^{[m]}$ give
	\begin{align*}
	 	w^{[m]\prime}(t)=1+w^{[m-1]}(t).                              
	\end{align*}
After the level-$(m-1)$ boundary has been crossed, $v^{[m-1]}$ is constant,
	and hence $w^{[m-1]\prime}=w^{[m-1]}$.  Therefore,
	\begin{align*}
 	\Delta^{[m]\prime}(t)=
		\begin{cases}
   			\Delta^{[m-1]}(t),&0<t<C_{m-1},\\
   			1,&C_{m-1}<t<C_m.
  		\end{cases}
	\end{align*}
Integration gives \eqref{delta-recursion}; by the uniqueness
	statement at the end of the proof of
	Lemma~\ref{lemma:poisson-last-arrival}, the function constructed in
	this way is the value function.
It remains to verify that the new boundary lies strictly to the right of
	the preceding one.  We prove simultaneously that
	\begin{align}
 		0\le \Delta^{[m]}(t)<\Delta^{[m-1]}(t)\le1,
 		\qquad 0<t\le C_{m-1},                            \label{delta-order}
	\end{align}
	and that $\Delta^{[m]\prime}(t)\ge\Delta^{[m]}(t)$ for
	$0<t<C_m$.  For $m=1$, the latter assertion follows from
	$\Delta^{[1]}(t)=t$ on $[0,1]$.  
Suppose it holds at level $m-1$.  
On $[0,C_{m-1}]$,
	\begin{align*}
	 	\frac{d}{dt}\{\Delta^{[m-1]}(t)-\Delta^{[m]}(t)\}
 		=\Delta^{[m-1]\prime}(t)-\Delta^{[m-1]}(t)\ge0.
	\end{align*}
Both functions vanish at zero.  Moreover, the derivative
	$\Delta^{[m-1]\prime}-\Delta^{[m-1]}$ is strictly positive almost
	everywhere on $(0,C_{m-1})$: on $(0,C_{m-2})$ it equals
	$\Delta^{[m-2]}-\Delta^{[m-1]}>0$ by \eqref{delta-order} at level $m-1$,
	and on $(C_{m-2},C_{m-1})$ it equals $1-\Delta^{[m-1]}>0$, because
	$\Delta^{[m-1]}$ increases with slope one and reaches the level one only
	at $C_{m-1}$ (for $m=2$ this is $1-t>0$ on $(0,1)$).  
Hence $\Delta^{[m]}<\Delta^{[m-1]}$ on $(0,C_{m-1}]$.  
Moreover, $\Delta^{[m]}(0)=0$ and
	$\Delta^{[m]\prime}=\Delta^{[m-1]}\ge0$ on $(0,C_{m-1})$, so that
	$\Delta^{[m]}\ge0$ there.
This proves \eqref{delta-order}.  
In particular, $\Delta^{[m]}(C_{m-1})<1$. 
Since
	$\Delta^{[m]\prime}(t)=1$ thereafter until the level one is reached,
	$\Delta^{[m]}$ reaches the level one for the first and only time at
	\begin{align*}
		 C_m=C_{m-1}+1-\Delta^{[m]}(C_{m-1}),
	\end{align*}
	so that
	\begin{align}
 		\lambda_m=1-\Delta^{[m]}(C_{m-1})>0.            \label{lambda-delta}
	\end{align}
On $(0,C_{m-1})$, $\Delta^{[m]\prime}=\Delta^{[m-1]}>\Delta^{[m]}$ by
	\eqref{delta-order}, and on $[C_{m-1},C_m]$,
	$\Delta^{[m]\prime}=1\ge\Delta^{[m]}$, which completes the induction.

At $t=C_m$, stopping and continuation are equal.  
Hence $v^{[m]}(C_m)=e^{-C_m}+v^{[m-1]}(C_m)$.
Since $C_m>C_{m-1}$, the preceding value has already reached its
	plateau: $v^{[m-1]}(C_m)=p^{[m-1]}$.  For $t\ge C_m$, the bracket in
	\eqref{poisson-bellman-ode} equals $e^{-t}-e^{-C_m}\le0$, so the
	level-$m$ value remains constant, and therefore
	\begin{align*}
	 	p^{[m]}=e^{-C_m}+p^{[m-1]}.
	\end{align*}
Iteration proves \eqref{poisson-value-sum}.
\end{proof}

In particular, the optimal rule of Lemma~\ref{lemma:poisson-last-arrival}
	has the threshold form
	$\tau^{[m]}=\inf\{T_j:\ T_j\ge T-C_m\}$ and
	$\tau^{[k]}=\inf\{T_j>\tau^{[k+1]}:\ T_j\ge T-C_k\}$, $k=m-1,\ldots,1$.

The next lemma connects the analytic free-boundary construction to the
finite equality system in Theorem~\ref{th32}.

\begin{lemma}
\label{lemma:poisson-equality-system}
Let $C_k=\lambda_1+\cdots+\lambda_k$ be the boundaries constructed in
	Lemma~\ref{lemma:poisson-boundaries}.  
Then, for every $k\ge1$,
	\begin{align}
 	\Delta^{[k]}(C_k)=\sum_{b\in\Xi_k}
  	 \prod_{\ell=1}^k\frac{\lambda_\ell^{b_\ell}}{b_\ell!}.
                                                               \label{delta-Xi}
	\end{align}
Consequently, $(\lambda_k)_{k\ge1}$ is the unique positive solution of
	\eqref{equalitysystem}.
\end{lemma}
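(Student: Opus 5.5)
The plan is to unroll the integral recursion \eqref{delta-recursion} into an iterated integral and then evaluate it with a combinatorial decomposition. Set $C_0=0$ and write $t_\ell:=C_\ell$. By \eqref{delta-recursion}, on $[0,C_k]$ the function $\Delta^{[k]}$ satisfies $\Delta^{[k]}(0)=0$, with $\Delta^{[k]\prime}=\Delta^{[k-1]}$ on $(0,C_{k-1})$ and $\Delta^{[k]\prime}=1$ on $(C_{k-1},C_k)$. Since $\Delta^{[k-1]}(C_{k-1})=1$, the second interval can be written uniformly: there $\Delta^{[k-1]}$ is replaced by its frozen boundary value $1$. This suggests introducing the truncated functions $D^{[k]}(t):=\Delta^{[k]}(t\wedge C_k)$, extended by the boundary value $1$ after $C_k$. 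With this convention $D^{[k]}(t)=\int_0^t D^{[k-1]}(s)\,ds$ for $t\le C_k$, where $D^{[0]}\equiv 1$; the last convention reproduces $\Delta^{[1]}(t)=t$. Iterating gives
\begin{align*}
  \Delta^{[k]}(C_k)=\int_{0<s_1<s_2<\cdots<s_k<C_k}\prod_{j=1}^{k-1}\mathbf 1\{s_j<C_j\ \text{or}\ \ldots\}\,ds ,
\end{align*}
but the truncation couples the levels. I will therefore not write this closed form directly. Instead I will track polynomial pieces on the intervals $I_\ell:=(C_{\ell-1},C_\ell]$, of lengths $\lambda_\ell$.

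The main step is a Taylor-type expansion of $\Delta^{[k]}(C_k)$ over the intervals $I_1,\ldots,I_k$. Integrating step by step, the value at $C_k$ is a sum over ways of distributing the $k$ successive integrations among the intervals. Let $b_\ell$ be the number of integrations whose variable falls in $I_\ell$. This contributes $\lambda_\ell^{b_\ell}/b_\ell!$, since an ordered simplex inside an interval of length $\lambda_\ell$ has volume $\lambda_\ell^{b}/b!$. The constraints defining $\Xi_k$ should arise as follows. The outermost integration variables, those belonging to levels $k,k-1,\ldots$, ascend through intervals $I_k,I_{k-1},\ldots$. A chain terminates at the interval $k^*$ where it hits a region in which the integrand has been frozen to the constant $1$. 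This is exactly where the accumulated count first satisfies $\sum_{j=\ell}^k b_j=1+k-\ell$; the lower intervals then carry $b_{k^*-1}=\cdots=b_1=0$. The strict inequalities for $\ell>k^*$ encode that the chain has not yet terminated. I will prove \eqref{delta-Xi} by induction on $k$. Using $\Delta^{[k]}(C_k)=\Delta^{[k]}(C_{k-1})+\lambda_k$, the remaining task is to express $\Delta^{[k]}(C_{k-1})=\int_0^{C_{k-1}}\Delta^{[k-1]}$. This in turn reduces to a strengthened hypothesis: a formula for $\int\cdots\int\Delta^{[j]}$ iterated $r$ times up to $C_j$, expressed as a sum over vectors of the $\Xi$ type with the modified deficit $1+r+j-\ell$. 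Stating and proving this generalized identity, indexed by an extra integration count $r$, is where I expect the main difficulty. The required bookkeeping is the decomposition
\begin{align*}
  \int_{0<s_1<\cdots<s_r<C_j}=\sum_{a=0}^{r}\int_{\{s_{r-a+1},\ldots,s_r\}\subset I_j}\int_{0<s_1<\cdots<s_{r-a}<C_{j-1}},
\end{align*}
which peels off the top interval, contributing $\lambda_j^a/a!$, together with the fact that on $I_j$ the function $\Delta^{[j]}$ is affine, equal to $\Delta^{[j]}(C_{j-1})+(s-C_{j-1})$.

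Finally, for the ``consequently'' statement, I combine \eqref{delta-Xi} with \eqref{delta-boundary}. This shows that $(\lambda_k)$ solves \eqref{equalitysystem}, and positivity follows from \eqref{lambda-delta}. For uniqueness I will argue triangularly. In \eqref{equalitysystem} at level $k$, $\lambda_k$ appears only through the terms with $b_k\ge1$; the strict constraint at $\ell=k$ forces $b_k=0$ unless $k^*=k$, in which case $b_k=1$ and all other entries vanish. So the equation reads $\lambda_k+F_k(\lambda_1,\ldots,\lambda_{k-1})=1$, which determines $\lambda_k$ uniquely given its predecessors. Inductively the positive solution is unique, and since the constructed sequence is one such solution, it is the solution.
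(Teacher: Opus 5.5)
Your route is the paper's in different notation, and nothing in it is wrong. As written, however, it stops exactly where the proof has to happen. The generalized identity with ``modified deficit $1+r+j-\ell$'' is announced but neither stated nor derived, and it is the entire content of \eqref{delta-Xi}.

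It closes with the ingredients you already list. Put $F_{j,0}:=\Delta^{[j]}(C_j)$ and, for $r\ge1$, $F_{j,r}:=\int_{0<s_1<\cdots<s_r<C_j}\Delta^{[j]}(s_1)\,ds_1\cdots ds_r$. Because $\Delta^{[i+1]\prime}=\Delta^{[i]}$ on $(0,C_i)$, one has $F_{j,r}=\Delta^{[j+r]}(C_j)$. In your peeling decomposition, the terms with $a<r$ leave an $(r-a)$-fold integral of $\Delta^{[j]}$ over $[0,C_{j-1}]$. There $\Delta^{[j]}$ is the antiderivative of $\Delta^{[j-1]}$, so the count rises to $r-a+1$. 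The term $a=r$ uses the affine form on $I_j$ and yields both $\frac{\lambda_j^r}{r!}\Delta^{[j]}(C_{j-1})=\frac{\lambda_j^r}{r!}F_{j-1,1}$ and the source term $\lambda_j^{r+1}/(r+1)!$. Hence
\begin{align*}
 F_{j,r}=\sum_{a=0}^{r}\frac{\lambda_j^a}{a!}\,F_{j-1,r+1-a}+\frac{\lambda_j^{r+1}}{(r+1)!},\qquad F_{0,\cdot}:=0,
\end{align*}
which is precisely the paper's \eqref{interval-recursion}; its $r$ is your $j+r$.

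Let $\Xi_j^{(r)}$ be defined like $\Xi_j$, with $1+k-\ell$ replaced by $1+r+j-\ell$. Write $b=(a,b')$. Then the condition at $\ell=j$ is $a\le r$, and the remaining conditions say exactly $b'\in\Xi_{j-1}^{(r+1-a)}$. The source term is the unique vector with $k^*=j$, namely $b_j=r+1$ and all other entries zero. Induction on $j$, with base case $F_{1,r}=\lambda_1^{r+1}/(r+1)!$ and $\Xi_1^{(r)}=\{(r+1)\}$, therefore gives $F_{j,r}=\sum_{b\in\Xi_j^{(r)}}\prod_\ell\lambda_\ell^{b_\ell}/b_\ell!$. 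The case $r=0$, $j=k$ is \eqref{delta-Xi}. This is the paper's downward unrolling of \eqref{interval-recursion}, phrased in iterated integrals.

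You also did not need to abandon the direct closed form. Expand $D^{[k-1]}(s)=\mathbf 1\{s>C_{k-1}\}+\mathbf 1\{s\le C_{k-1}\}\int_0^s D^{[k-2]}(u)\,du$ repeatedly. This shows that $\Delta^{[k]}(C_k)$ is the sum, over $i=1,\ldots,k$, of the volumes of
\begin{align*}
R_i:=\{0<s_i<\cdots<s_k<C_k:\ s_\ell\le C_{\ell-1}\ (i<\ell\le k),\ s_i>C_{i-1}\}.
\end{align*}
Note that a chain has $k-i+1$ variables, not $k$. In terms of the occupation counts $b_\ell$ of the intervals $I_\ell$, the condition $s_\ell\le C_{\ell-1}$ reads $\sum_{j\ge\ell}b_j\le k-\ell$, and $s_i>C_{i-1}$ reads $\sum_{j\ge i}b_j=k-i+1$, with $b_\ell=0$ for $\ell<i$. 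These are exactly the defining conditions of $\Xi_k$ with $k^*=i$. Each cell therefore has volume $\prod_\ell\lambda_\ell^{b_\ell}/b_\ell!$, and \eqref{delta-Xi} follows with no auxiliary induction.

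Your triangular uniqueness argument is correct and is the paper's.
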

\begin{proof}
Partition the time axis into the successive boundary intervals
	\begin{align*}
				\Lambda_j:=[C_{j-1},C_j],\qquad |\Lambda_j|=\lambda_j.
	\end{align*}
On the interior of $\Lambda_j$, levels below $j$ have already entered their
	continuation plateaus, level $j$ is in the final part of its stopping
	region, and all levels above $j$ are still in their stopping regions.
Thus
	\begin{align}
		 \Delta^{[r]\prime}(t)=
		 \begin{cases}
 			 \Delta^{[r]}(t),&r<j,\\
  			1,&r=j,\\
  			\Delta^{[r-1]}(t),&r>j.
 		\end{cases}                                             \label{triangular-system}
	\end{align}
For $r\ge j$, the system \eqref{triangular-system} on $\Lambda_j$ is
	triangular: the level-$j$ equation has the constant source $1$, and each
	higher level is driven by the level immediately below it.  
Write $t=C_{j-1}+x$ with $0\le x\le\lambda_j$.  
Integrating the level-$j$ equation from $C_{j-1}$ gives
	\begin{align*}
 		\Delta^{[j]}(C_{j-1}+x)=\Delta^{[j]}(C_{j-1})+\int_0^x 1\,dy
  		 =\Delta^{[j]}(C_{j-1})+x .
	\end{align*}
Substituting this into the level-$(j+1)$ equation and integrating again,
	\begin{align*}
 		\Delta^{[j+1]}(C_{j-1}+x)&=\Delta^{[j+1]}(C_{j-1})+\int_0^x \Delta^{[j]}(C_{j-1}+y)\,dy\\
   			&=\Delta^{[j+1]}(C_{j-1})+\int_0^x\bigl\{\Delta^{[j]}(C_{j-1})+y\bigr\}\,dy\\
  			 &=\Delta^{[j+1]}(C_{j-1})+x\,\Delta^{[j]}(C_{j-1})+\frac{x^2}{2!}.
	\end{align*}
In general, by induction on $r$,
	\begin{align}
 		\Delta^{[r]}(C_{j-1}+x)=\sum_{a=0}^{r-j}\frac{x^a}{a!}\,\Delta^{[r-a]}(C_{j-1})
    			+\frac{x^{r-j+1}}{(r-j+1)!},\qquad 0\le x\le\lambda_j .                       
		\label{interval-recursion-x}
	\end{align}
Indeed, if \eqref{interval-recursion-x} holds at level $r-1$, then
	\begin{align*}
 	\Delta^{[r]}(C_{j-1}+x)&=\Delta^{[r]}(C_{j-1})+\int_0^x \Delta^{[r-1]}(C_{j-1}+y)\,dy\\
   		&=\Delta^{[r]}(C_{j-1})+\sum_{a=0}^{r-1-j}\Delta^{[r-1-a]}(C_{j-1})\int_0^x\frac{y^a}{a!}\,dy
     			+\int_0^x\frac{y^{r-j}}{(r-j)!}\,dy\\
   		&=\Delta^{[r]}(C_{j-1}) +\sum_{a=0}^{r-1-j}\frac{x^{a+1}}{(a+1)!}\,\Delta^{[r-1-a]}(C_{j-1})
     			+\frac{x^{r-j+1}}{(r-j+1)!},
	\end{align*}
	and re-indexing the sum by $a+1\mapsto a$ yields
	\eqref{interval-recursion-x} at level $r$.  
The sum collects the initial values at $C_{j-1}$ transported upward by repeated integration, and the
	last term is the contribution of the constant source at level $j$.
Setting $x=\lambda_j$ in \eqref{interval-recursion-x} gives
	\begin{align}
 		\Delta^{[r]}(C_j)=\sum_{a=0}^{r-j}\frac{\lambda_j^a}{a!}\Delta^{[r-a]}(C_{j-1})
  		+\frac{\lambda_j^{r-j+1}}{(r-j+1)!}.             
		 \label{interval-recursion}
	\end{align}

Starting from $\Delta^{[r]}(C_0)=0$, unroll \eqref{interval-recursion} downward from $(r,j)=(k,k)$.  
Suppose that the current level at step $j$ is $\nu_j$, with $\nu_k=k$.  
Equation \eqref{interval-recursion} offers two alternatives: either choose
	$b_j\in\{0,\ldots,\nu_j-j\}$, collect the factor $\lambda_j^{b_j}/b_j!$ and
	pass to $\Delta^{[\nu_{j-1}]}(C_{j-1})$ with $\nu_{j-1}:=\nu_j-b_j\ge j$; or take
	the source term, that is, put $b_j:=\nu_j-j+1$, collect
	$\lambda_j^{b_j}/b_j!$ and stop.  
Paths that reach $j=0$ without stopping contribute zero because $\Delta^{[r]}(C_0)=0$.  
Since $\nu_j=k-\sum_{i>j}b_i$, the nonterminal condition $b_j\le \nu_j-j$ is
	equivalent to $\sum_{i=j}^kb_i<1+k-j$, the terminal condition is
	equivalent to $\sum_{i=j}^kb_i=1+k-j$, and after termination the
	remaining coordinates are set to zero.  
Denoting the terminal index by $k^*$, the terminating paths are therefore
	in one-to-one correspondence with the vectors of $\Xi_k$, and the path
	associated with $b\in\Xi_k$ contributes
	$\prod_{\ell=1}^k\lambda_\ell^{b_\ell}/b_\ell!$.  
This proves \eqref{delta-Xi}.

By \eqref{delta-boundary}, the left-hand side of \eqref{delta-Xi} equals
	one, so the constructed sequence satisfies \eqref{equalitysystem}.  
In the $k$th equation the only term containing $\lambda_k$ is the vector 
	$(b_k,b_{k-1},\ldots,b_1)=(1,0,\ldots,0)$; all other terms depend only on
	$\lambda_1,\ldots,\lambda_{k-1}$.  
Hence each equation determines $\lambda_k$ uniquely from the preceding constants.  
Positivity follows from \eqref{lambda-delta}.  
Thus the positive solution is unique.
\end{proof}

For example, the first three boundary equations are
	\begin{align*}
	 	\lambda_1&=1,\\
 		\lambda_2+\frac{\lambda_1^2}{2!}&=1,\\
 		\lambda_3+\frac{\lambda_2^2}{2!}
 			+\frac{\lambda_2\lambda_1^2}{2!}+\frac{\lambda_1^3}{3!}&=1.
	\end{align*}
They give
 	$C_1=1$, $C_2=3/2$ and $C_3=47/24$.

\begin{lemma}
\label{lemma:poisson-block-comparison}
Let $I_1,\ldots,I_N$ be independent Bernoulli variables with
	$P(I_i=0)=q_i\in[0,1]$, put $\kappa_i:=-\log q_i\in[0,\infty]$ and
	$R:=\sum_{i=1}^N\kappa_i=-\log\prod_{i=1}^Nq_i\in[0,\infty]$, and set
	$v^{[m]}(\infty):=p^{[m]}=\lim_{t\to\infty}v^{[m]}(t)$.
Then the optimal $m$-stopping winning probability satisfies
	\begin{align*}
 		P_N^{[m]}(\mathrm{win})\ge v^{[m]}(R).                 
	\end{align*}
\end{lemma}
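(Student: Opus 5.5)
The plan is a coupling argument. We build the Bernoulli trials from a rate-one Poisson process $\Pi$ on $[0,R]$, so that the Bernoulli decision maker can imitate the optimal Poisson rule of Lemma~\ref{lemma:poisson-last-arrival}.

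\textbf{Construction.} Partition $[0,R)$ into consecutive blocks $B_i$ of lengths $\kappa_i$. Set $I_i:=\mathbf 1\{\Pi(B_i)\ge1\}$. By independent increments the $I_i$ are independent, and $P(I_i=0)=e^{-\kappa_i}=q_i$, so $(I_i)$ has exactly the prescribed law. The degenerate cases are harmless. If $\kappa_i=0$ then $I_i\equiv0$. If $\kappa_i=\infty$ (i.e.\ $q_i=0$), we handle it by truncation: only the last such index matters, since a sure success at $i_0$ makes all earlier trials irrelevant. So we can restrict to trials $i\ge i_0$, and $R=\infty$ is then covered by a limit $v^{[m]}(t)\uparrow p^{[m]}$ along the finite blocks after $i_0$. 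Alternatively, one argues directly with the same coupling on a half-line.

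\textbf{Imitation.} Let $\vec\tau$ be the optimal threshold rule for $\Pi$ on $[0,R]$ with $m$ selections; by Lemma~\ref{lemma:poisson-boundaries} it selects arrivals in the windows $T_j\ge R-C_k$. Define a Bernoulli strategy as follows: at trial $i$ with $I_i=1$, select $i$ iff the Poisson rule selects the \emph{last} arrival inside $B_i$.

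This is not adapted to $\sigma(I_1,\dots,I_i)$, because it uses the positions of the points in $B_i$. However, it is adapted to the enlarged filtration $\mathcal G_i:=\sigma(\Pi|_{B_1\cup\dots\cup B_i},U)$, where $U$ is auxiliary randomization. Because the future blocks are independent of $\mathcal G_i$ given nothing, and the win event depends on the future only through $(I_{i+1},\dots,I_N)$, the conditional reward $E[Y_i\mid\mathcal G_i]=I_i\prod_{j>i}q_j=X_i$ is the same as before. This is the main obstacle: showing that a $\mathcal G$-adapted (randomized) strategy cannot beat $P_N^{[m]}(\mathrm{win})$. I would handle it through the Snell-envelope characterization of Theorem~\ref{th22}. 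Since $(I_j)_{j>i}$ is independent of $\mathcal G_i$, the level-$k$ value processes computed in $\mathcal G$ coincide with the functions $V^{[k]}$ computed from the original filtration; by induction via the dynamic programming recursion (a'), they depend only on the current index and the remaining rights. Step~1 of the proof of Theorem~\ref{th22} then bounds the reward of any $\mathcal G$-strategy by $V_0^{[m]}=P_N^{[m]}(\mathrm{win})$.

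\textbf{Comparison.} It remains to show that the imitating strategy wins whenever the Poisson rule wins. The imitating strategy selects at most one index per block, and it selects only when the Poisson rule selects the last arrival of that block. So it uses at most as many rights as the Poisson rule (at most $m$), and the selected indices are strictly increasing, giving an admissible vector in $\bar\T_N^{[m]}$. If the Poisson rule wins, it selected the last arrival of $\Pi$ on $[0,R]$; this lies in the last nonempty block $B_{i}$, which is the last success, and the imitation selects $i$. Hence the win events are nested: the Bernoulli win probability is at least the Poisson win probability, which equals $v^{[m]}(R)$. Combined with the previous step, this gives $P_N^{[m]}(\mathrm{win})\ge v^{[m]}(R)$.
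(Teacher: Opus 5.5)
Your argument is correct, but it is organized differently from the paper's. The paper runs the coupling in the opposite direction. It generates the Poisson configuration \emph{from} the Bernoulli data and independent uniforms $\xi_i$: block $B_i$ is empty when $I_i=0$, and carries a configuration conditioned to be nonempty when $I_i=1$. For an \emph{arbitrary} Poisson strategy, the imitating rule $A_i$ (select $i$ if the strategy selects at least one point of $B_i$) is then a function of $(I_1,\dots,I_i,\xi_1,\dots,\xi_i)$. Fixing $\xi$ makes it an ordinary rule adapted to $\sigma(I_1,\dots,I_i)$, so its winning probability is at most $P_N^{[m]}(\mathrm{win})$. Averaging over $\xi$ and taking the supremum over Poisson strategies gives the bound.

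Because you start from the Poisson process, your imitation is adapted only to the enlarged filtration $\mathcal G$, and you pay for this by re-deriving the value in $\mathcal G$. Independence of $(I_j)_{j>i}$ from $\mathcal G_i$ makes the backward recursion (a') produce the same Snell envelopes, and Step~1 of Theorem~\ref{th22} then bounds every $\mathcal G$-strategy. (These envelopes are functions of $i$, the number of rights, and $I_i$ --- not of $i$ and the rights alone, as you wrote --- but this does not affect the argument.) This is heavier than conditioning on $\xi$, but it isolates a reusable principle: information independent of the future cannot raise the value. It also needs no auxiliary randomization. Imitating only the optimal threshold rule, and selecting block $i$ only when the last arrival of $B_i$ is chosen, are harmless variations; the inclusion of win events holds either way.

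The one point to tighten is the case $q_i=0$. The paper replaces each zero $q_i$ by $\varepsilon$ and uses that $P_N^{[m]}(\mathrm{win})$ is a maximum of finitely many polynomials in $(p_1,\dots,p_N)$, hence continuous. Your first suggestion does not work as written. After restricting to $i\ge i_0$, the blocks after $i_0$ have a fixed finite total length $R'$, so there is no limit to take along them. Making $B_{i_0}$ finite instead changes the Bernoulli law, which requires exactly such a continuity or comparison step, and you do not supply it. Your half-line alternative is sound: put $B_{i_0}=(-\infty,0]$ and the later blocks in $(0,R']$. You should, however, state explicitly that the threshold rule never selects at remaining time above $C_m$. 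Consequently its winning probability on the half-line equals its winning probability on an interval of any length $t\ge\max\{C_m,R'\}$, namely $v^{[m]}(t)=p^{[m]}$. This step is needed because Lemma~\ref{lemma:poisson-last-arrival} itself concerns only finite intervals.
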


\begin{proof}
Assume first that $R<\infty$, that is, $q_i>0$ for every $i$; the case
	where some $q_i=0$ is treated at the end.  
Let $M=(M_t)_{t\ge0}$ be a rate-one Poisson counting process, and set
	\begin{align*}
		 a_0=0,\qquad a_i=\sum_{j=1}^i \kappa_j,\qquad
		 B_i=(a_{i-1},a_i],\qquad R=a_N.
	\end{align*}
Then the counts $M(B_i)=M_{a_i}-M_{a_{i-1}}\sim\operatorname{Poisson}(\kappa_i)$
	are independent, and
	\begin{align*}
		 P(M(B_i)=0)=e^{-\kappa_i}=q_i,\qquad P(M(B_i)\ge1)=1-e^{-\kappa_i}=p_i.
	\end{align*}
	(If $q_i=1$, then $B_i=\varnothing$ and $I_i=0$ almost surely.)

We realize this Poisson configuration \emph{sequentially} from the
	Bernoulli data, so that the imitation strategy constructed below is adapted.  
Let $\xi_1,\ldots,\xi_N$ be i.i.d.\ uniform random variables, independent of $(I_i)$.  
If $I_i=0$, declare $B_i$ empty; if $I_i=1$, use $\xi_i$ to generate a
	configuration in $B_i$ distributed as the Poisson configuration in $B_i$
	conditioned on being nonempty.  
The resulting point process has the law of $M$ on $(0,R]$, its restriction
	to $B_1\cup\cdots\cup B_i$ is a function of
	$(I_1,\ldots,I_i,\xi_1,\ldots,\xi_i)$, and it is independent of
	$(I_{i+1},\ldots,I_N,\xi_{i+1},\ldots,\xi_N)$.  
In particular, $I_i$ is the occupancy indicator of $B_i$.

Fix an arbitrary stopping strategy $\pi$ for the Poisson embedding problem,
	which we call a Poisson stopping strategy, and denote its finite
	selection times by $\sigma_1<\cdots<\sigma_L$, where $0\le L\le m$.
Define
	\begin{align*}
		 A_i=\mathbf 1_{\{\exists\,\ell\le L:\ \sigma_\ell\in B_i\}},
	\end{align*}
	so that $A_i$ indicates whether the strategy selects at least one point
	in $B_i$.  
Since all selections of $\pi$ inside $B_i$ are determined by the
	configuration on $(0,a_i]$, $A_i$ is
	$\sigma(I_1,\ldots,I_i,\xi_1,\ldots,\xi_i)$-measurable.  
In the odds problem, select time $i$ once if $A_i=1$.
Even if the Poisson strategy selects two or more points in the same
	interval, these selections correspond to a single selection in the
	discrete problem, whereas the number of selections remaining for the
	Poisson strategy is reduced by the actual number of points it selects.
Consequently,
	\begin{align*}
		 A_i\le I_i,\qquad \sum_{i=1}^N A_i\le L\le m.
	\end{align*}

If the Poisson strategy selects the last arrival, which lies in the last
	nonempty interval $B_J$, then $A_J=1$ and $J$ is the last success, so
	the imitating strategy selects the last success.  
The converse need not hold.  
Therefore, writing $W_{\mathrm{Poisson}}$ and $W_{\mathrm{Odds}}$ for the
	respective winning events, we have
	\begin{align}\label{eq:win-inclusion}
 		W_{\mathrm{Poisson}}\subseteq W_{\mathrm{Odds}}.
	\end{align}
If there are no points, neither strategy is regarded as winning.
For each fixed realization of $\xi_1,\ldots,\xi_N$, the rule $(A_i)$ is a
	deterministic adapted rule for $(I_i)$ with at most $m$ selections, and
	hence its winning probability is at most $P_N^{[m]}(\mathrm{win})$.
Averaging over the independent variables $\xi_i$ and using
	\eqref{eq:win-inclusion}, we obtain
	$P(W_{\mathrm{Poisson}})\le P(W_{\mathrm{Odds}})\le P_N^{[m]}(\mathrm{win})$.  
Taking the supremum over Poisson strategies gives
	\begin{align*}
		 P_N^{[m]}(\mathrm{win})\ge v^{[m]}(R).
	\end{align*}

For example, suppose that the last nonempty interval $B_J$ contains three
	points $T_{J,1}<T_{J,2}<T_{J,3}$.  
If the Poisson strategy selects the last point $T_{J,3}$, the discrete strategy selects $J$.  
If the Poisson strategy selects only $T_{J,1}$ in this interval, the discrete strategy still selects
	$J$ and wins.  
Thus the winning events need not coincide, and the inclusion
	has the direction stated in \eqref{eq:win-inclusion}.
Figure~\ref{fig:poisson-blocks} shows one realization for $N=5$ and $m=2$.
Each vertical band represents $B_i=(a_{i-1},a_i]$.  
From top to bottom, the figure shows the point configuration, the number of points, 
	the success indicators, and the selections in the odds problem.
In this example, the Poisson strategy selects one point in $B_2$ and the
	last point in $B_4$, so the corresponding discrete selection epochs are
	$2$ and $4$.  Since $B_5$ is empty, the last success time is $J=4$, and
	\begin{align*}
		\sigma_2=T_*\in B_4\quad\Longrightarrow\quad J=4\text{ is selected}.
	\end{align*}

\begin{figure}[htbp]
 \centering
 \resizebox{0.8\linewidth}{!}{%
 \begin{tikzpicture}[x=1cm,y=1cm,
   every node/.style={font=\Large,text=blue!25!black},
   line cap=round]
  \definecolor{poissonnavy}{RGB}{27,47,66}
  \definecolor{poissonblue}{RGB}{41,109,145}
  \definecolor{poissonred}{RGB}{192,40,73}
  \definecolor{poissongray}{RGB}{112,126,139}
  \path[use as bounding box] (-0.3,-1.15) rectangle (15.2,6.9);
  \fill[gray!7] (1.6,0) rectangle (3.85,5.5);
  \fill[cyan!4!white] (3.85,0) rectangle (6.475,5.5);
  \fill[cyan!4!white] (6.475,0) rectangle (8.35,5.5);
  \fill[red!5!white] (8.35,0) rectangle (11.725,5.5);
  \fill[gray!7] (11.725,0) rectangle (13.975,5.5);
  \foreach \x in {1.6,3.85,6.475,8.35,11.725,13.975}{
   \draw[gray!35,dashed,line width=0.5pt] (\x,0)--(\x,5.5);
  }
  \node[anchor=west] at (-0.3,6.45)
   {$B_i=(a_{i-1},a_i],\qquad \kappa_i=a_i-a_{i-1}=-\log q_i$};
  \node[anchor=east] at (13.975,6.45) {$m=2$};
  \foreach \x/\i in {2.725/1,5.1625/2,7.4125/3,10.0375/4,12.85/5}{
   \node at (\x,5.08) {$B_{\i}$};
  }
  \draw[->,poissonnavy,line width=0.9pt] (1.6,3.65)--(14.4,3.65);
  \node at (14.6,3.65) {$t$};
  \node[anchor=east] at (1.03,3.65) {$M$};
  \foreach \x in {1.6,3.85,6.475,8.35,11.725,13.975}{
   \draw[poissonnavy,line width=0.8pt] (\x,3.53)--(\x,3.77);
  }
  \foreach \x in {5.70625,7.28125,8.9125,9.90625}{
   \fill[poissonblue] (\x,3.65) circle (0.074);
  }
  \foreach \x in {4.50625,10.975}{
   \filldraw[fill=white,draw=poissonred,line width=1.1pt]
    (\x,3.65) circle (0.12);
   \fill[poissonred] (\x,3.65) circle (0.072);
  }
  \node[text=poissonred] at (4.50625,4.14) {$\sigma_1$};
  \node[text=poissonred] at (10.975,4.14) {$\sigma_2=T_*$};
  \node[text=poissongray] at (1.6,3.08) {$a_0=0$};
  \foreach \x/\i in {3.85/1,6.475/2,8.35/3,11.725/4}{
   \node[text=poissongray] at (\x,3.08) {$a_{\i}$};
  }
  \node[text=poissongray] at (13.975,3.08) {$a_5=R$};
  \node[anchor=east] at (1.03,2.30) {$M(B_i)$};
  \node[anchor=east] at (1.03,1.43) {$I_i$};
  \node[anchor=east] at (1.03,0.55) {$A_i$};
  \foreach \x/\n/\i in {2.725/0/0,5.1625/2/1,7.4125/1/1,10.0375/3/1,12.85/0/0}{
   \node at (\x,2.30) {$\n$};
   \node at (\x,1.43) {$\i$};
  }
  \foreach \x in {2.725,7.4125,12.85}{
   \node[text=poissongray] at (\x,0.55) {$0$};
  }
  \foreach \x in {5.1625,10.0375}{
   \node[circle,draw=poissonred,text=poissonred,line width=0.8pt,
    inner sep=1.5pt] at (\x,0.55) {$1$};
  }
  \node[anchor=west] at (-0.3,-0.78)
   {$I_i=\mathbf 1_{\{M(B_i)\ge1\}}$};
  \node[text=poissonred] at (8.85,-0.78)
   {$\sigma_2=T_*\in B_4\quad\Longrightarrow\quad J=4\text{ is selected}$};
 \end{tikzpicture}%
 }
 \caption{\footnotesize Correspondence between Poisson intervals and discrete
 	observations and selections ($N=5$, $m=2$).
 	The top row, labeled $M$, shows the arrival points of the constructed counting
 		process; red circles mark the points selected by the strategy.
	 Here $M(B_i)$ is the number of points in the interval,
 	$I_i=\mathbf 1_{\{M(B_i)\ge1\}}$ indicates the success or failure observed
		 in the discrete problem, and $A_i$ indicates whether time $i$ is selected.
 	The interval $B_4$, which contains the last point $T_*$, is the last nonempty 
		interval.  
	If the Poisson strategy selects $T_*$, the discrete strategy
 		selects the last success time $J=4$.}
 \label{fig:poisson-blocks}
\end{figure}

The case where some $q_i=0$ follows by approximation with finite values of $R$.  
Replace only the zero components $q_i$ by $\varepsilon\in(0,1)$,
	leaving all other components unchanged.  
Denote the modified failure
	probabilities by $q_i^{(\varepsilon)}$ and the corresponding optimal
	winning probability by $V^{(\varepsilon)}$.
The comparison established above gives
	\begin{align*}
 		V^{(\varepsilon)}\ge v^{[m]}(R_\varepsilon),\qquad
 		R_\varepsilon=-\sum_{i=1}^N\log q_i^{(\varepsilon)}.
	\end{align*}
Since at least one original component is zero, $R_\varepsilon\to\infty$
	as $\varepsilon\downarrow0$.  
Lemma~\ref{lemma:poisson-boundaries} therefore
	implies $v^{[m]}(R_\varepsilon)\to p^{[m]}$.

For a finite horizon $N$, there are only finitely many deterministic
	strategies: each specifies whether to select or reject at each of the
	finitely many observation histories.  
The winning probability of each strategy is a polynomial in $(p_1,\ldots,p_N)$, and the maximum of these
	finitely many polynomials is continuous.  
Consequently,
	\begin{align*}
		 V^{(\varepsilon)}\longrightarrow P_N^{[m]}(\mathrm{win}).
	\end{align*}
It follows that $P_N^{[m]}(\mathrm{win})\ge p^{[m]}=v^{[m]}(\infty)$, which proves
	the assertion in this case as well.
\end{proof}

\begin{proof}[Alternative proof of Theorem~\ref{th32}]
Lemma~\ref{lemma:poisson-equality-system} proves the existence, uniqueness,
	and positivity of the sequence $(\lambda_k)$ in
	\eqref{equalitysystem}, with $C_k=\sum_{j=1}^k\lambda_j$.
Suppose now that $\prod_{i=1}^Nq_i<e^{-C_m}$.
Then $R=-\log\prod_iq_i>C_m$.  
By Lemma~\ref{lemma:poisson-boundaries}, $v^{[m]}$ is constant on
	$[C_m,\infty)$, and by Lemma~\ref{lemma:poisson-block-comparison},
	\begin{align*}
	 	P_N^{[m]}(\mathrm{win})\ge v^{[m]}(R)=p^{[m]}=\sum_{k=1}^m e^{-C_k}.
	\end{align*}
This is exactly \eqref{boundsofwin}.
It remains to identify the secretary limit.  
For the classical secretary problem, the record indicators are independent and, 
	for $i\ge2$, $p_i=1/i$ and $q_i=1-1/i$. 
Introduce the remaining logarithmic time $t_{i,N}:=-\log(i/N)$. 
For every fixed $0\le a<b<\infty$, the number of record indicators whose
	logarithmic times lie in $(a,b]$ is a sum of independent Bernoulli
	variables with maximal success probability tending to zero and with mean
	converging to $b-a$.  
Counts on disjoint intervals are independent.
Therefore the record point process in logarithmic time converges to a rate-one Poisson process.

The same conclusion is visible directly in the threshold recursion.
Throughout, $\lfloor a\rfloor$ denotes the integer part of $a$, so that
	$\lfloor Ne^{-t}\rfloor$ is the observation index corresponding to the
	logarithmic time $t=-\log(i/N)$.
Using $r_i=1/(i-1)$ in \eqref{Hrecursion}, a Riemann-sum induction gives,
	for every fixed $k$ and locally uniformly in $t$,
	\begin{align}
 		H_{\lfloor Ne^{-t}\rfloor}^{[k]}
 			\longrightarrow \widetilde\Delta^{[k]}(t),      
		 \label{H-to-delta}
	\end{align}
	where
	\begin{align*}
		\widetilde\Delta^{[k]}(t):=t+\int_0^{t\wedge C_{k-1}}
		\{\Delta^{[k-1]}(s)-1\}\,ds,\qquad t\ge0.
	\end{align*}
By \eqref{delta-recursion}, $\widetilde\Delta^{[k]}=\Delta^{[k]}$ on
	$[0,C_k]$; for $t>C_k$, $\widetilde\Delta^{[k]}$ is the linear 
	continuation of $\Delta^{[k]}$ and differs from
	$\Delta^{[k]}(t)=e^{t-C_k}$.
For $k=1$, this is the harmonic-sum limit
	$\sum_{j>Ne^{-t}}1/(j-1)\to t$.  
If \eqref{H-to-delta} holds at level $k-1$, the additional sum in \eqref{Hrecursion} converges to
	\begin{align*}
	 	\int_0^{t\wedge C_{k-1}} \{\Delta^{[k-1]}(s)-1\}\,ds,
	\end{align*}
	which is precisely \eqref{delta-recursion}; only values of
	$H^{[k-1]}$ at indices $j\ge i_*^{[k-1]}$, that is, at logarithmic times
	not exceeding $C_{k-1}$ in the limit, enter this sum, and there
	$\widetilde\Delta^{[k-1]}=\Delta^{[k-1]}$.  
Since $\widetilde\Delta^{[k]}$ is strictly increasing and
	$\widetilde\Delta^{[k]}(t)=1$ has the unique solution $t=C_k$, convergence of the
	crossing points yields
	\begin{align*}
 		\frac{i_*^{[k]}}{N}\longrightarrow e^{-C_k}
			=\exp\left\{-\sum_{j=1}^k\lambda_j\right\}.
	\end{align*}
The Poisson-block comparison supplies only the lower bound.  Thus, to identify
	the secretary limit, it remains to show that there is no asymptotic gain over
	the Poisson value.  In the secretary problem $q_1=0$, so $R=\infty$ and
	Lemma~\ref{lemma:poisson-block-comparison} gives
	$P_N^{[m]}(\mathrm{win})\ge p^{[m]}$ for \emph{every} $N$.  
To obtain the matching upper bound, independent trials are the special case
	$\alpha_i=1-p_{i+1}$, $\beta_i=p_{i+1}$ of the Markov model of
	Section~\ref{subsec:odds-markov}.  
For $p_i=1/i$ the localized conditions \eqref{MP1}, \eqref{MP2} and
	\eqref{MP3} of Theorem~\ref{th:markov-poisson-odds} hold, because
	$\sum_{\lfloor Na\rfloor\le i<\lfloor Nb\rfloor}1/(i+1)\to\log(b/a)$
	locally uniformly and $\max_{i\ge\delta N}1/(i+1)\to0$, and the
	single-crossing condition \eqref{markovmonotonicitycondition} holds
	since $H_i^{[1]}$ is nonincreasing.  
Theorem~\ref{th:markov-poisson-odds} therefore gives
	\begin{align*}
 		P_N^{[m]}(\mathrm{win})\longrightarrow p^{[m]}
 		=\sum_{k=1}^m\exp\left\{-\sum_{j=1}^k\lambda_j\right\}.
	\end{align*}
The proof of Theorem~\ref{th:markov-poisson-odds} uses only
	Lemmas~\ref{lemma:poisson-last-arrival}--\ref{lemma:poisson-equality-system}
	and not Theorem~\ref{th32} itself, so the argument is not circular.
This completes the alternative proof.
\end{proof}

For example, $C_1=1$, $C_2=3/2$, $C_3=47/24$, $C_4=2761/1152$,
$C_5=4162637/1474560$, $C_6\approx3.2410$, and $C_7\approx3.6530$, and,
under the hypothesis of Theorem~\ref{th32} for the corresponding $m$,
	\begin{align*}
 		P_N^{[3]}(\mathrm{win})
		 	&\ge e^{-1}+e^{-3/2}+e^{-47/24}\approx0.7321,\\
		P_N^{[4]}(\mathrm{win})
 			&\ge e^{-1}+e^{-3/2}+e^{-47/24}+e^{-2761/1152}
  		 	\approx0.8231,\\
		P_N^{[5]}(\mathrm{win})
 			&\ge e^{-1}+e^{-3/2}+e^{-47/24}+e^{-2761/1152}
				+e^{-4162637/1474560}\approx0.8825.
	\end{align*}
For three selection opportunities, begin accepting successes after
	approximately $e^{-47/24}N$.
After the first accepted success, accept the first subsequent success
	occurring no earlier than $e^{-3/2}N$; if that threshold has already
	passed, accept the next success immediately.
After the second accepted success, apply the same rule with threshold $e^{-1}N$.
The resulting asymptotic winning probability is approximately $0.7321$.

The deterministic thresholds
	$i_*^{[m]}\le\cdots\le i_*^{[1]}$, calculated recursively from the
	one-step look-ahead functions, give a more explicit description of the
	optimal policy than the abstract contact-time representation by the Snell
	envelopes.

\subsection{Random-horizon odds problem}
\label{subsec:odds-random-horizon}
A natural extension of the classical secretary/last-success setting allows the
	number of observations to be random.  
Presman and Sonin~\cite{presmansonin} initiated the study of this model 
	and identified a single-crossing condition
	under which the optimal rule is monotone.  
Porosi\'nski~\cite{porosinski} studied the corresponding full-information problem.  
Related random-horizon models include the secretary problem with a random observation horizon 
	of Ano and Sakaguchi~\cite{anosakaguchi}, Poisson-arrival models of
	Bruss~\cite{bruss1987} and Ano and Ando~\cite{anoando}, Bayesian Poisson
	models of Kurushima and Ano~\cite{kurushimaano2003} and Gnedin and
	Derbazi~\cite{gnedinderbazi}, and related random-arrival selection models of
	Gnedin~\cite{gnedin2022}.  
We review here the multiple-stopping extension of Kurushima and Ano~\cite{kurushimaano2016} 
	and express it in the notation of Section~\ref{sec:optimal-multiple-stopping}.
\subsubsection{Single stopping}
Let $N_0$ be a nonnegative integer-valued random variable, independent of
	an infinite sequence $(I_i)_{i\ge1}$ of independent Bernoulli random
	variables.  
Write
	\begin{align*}
 		\delta_k&:=P(N_0=k), &&k\in\N,\\
		 \pi_i&:=P(N_0\ge i)=\sum_{k=i}^{\bar N}\delta_k,
   		&&i\ge0,
	\end{align*}
	where $\bar N:=\sup\{k\ge0:\delta_k>0\}\in\N\cup\{\infty\}$.
Only indices $i$ for which $\pi_i>0$ are relevant.  
The event $N_0=0$
	means that no observation is available and the decision maker loses.  For
	$i\ge1$, assume that $0<p_i:=P(I_i=1)<1$, and write $q_i:=1-p_i$ and $r_i:=p_i/q_i$.
Throughout, an empty product $\prod_{k=a}^{b}q_k$ with $a>b$ is
	interpreted as $1$.

Suppose that the $i$th observation exists and is a success.  
If that success is selected, the conditional probability that it is the last
	success before the random horizon is
	\begin{align}\label{randomhorizonreward}
 		x_i^{[1]}=P(I_{i+1}=\cdots=I_{N_0}=0\mid N_0\ge i)
 		=\frac{1}{\pi_i}\sum_{k=i}^{\bar N}\delta_k \prod_{n=i+1}^{k}q_n.
	\end{align}
The probability that, after a success at $i$, the next success occurs at $j>i$ is
	\begin{align*}
		P_{ij}:=\Bigl(\prod_{k=i+1}^{j-1}q_k\Bigr)p_j=\Bigl(\prod_{k=i+1}^{j}q_k\Bigr)r_j.
	\end{align*}
After conditioning also on the survival of the horizon to time $j$, the
	substochastic transition kernel of the embedded success process is
	$P_{ij}\pi_j/\pi_i$, $j>i$.
Its defect accounts for termination of the observation sequence before the
	next success, or for the absence of any later success.

Let $V_i^{[1]}$ be the maximal conditional winning probability at the
	embedded success state $i$, before deciding whether to select that success.
Then
	\begin{align*}
 		V_i^{[1]}=\max\left\{x_i^{[1]},
   		\sum_{j=i+1}^{\bar N}P_{ij}\frac{\pi_j}{\pi_i}V_j^{[1]}\right\}.
	\end{align*}
When $\bar N<\infty$, the terminal condition is
	$V_{\bar N}^{[1]}=x_{\bar N}^{[1]}=1$.
The one-step look-ahead difference is
	\begin{align}\label{psGi1}
 		G_i^{[1]} :=x_i^{[1]}-\sum_{j=i+1}^{\bar N}P_{ij}\frac{\pi_j}{\pi_i}x_j^{[1]}.
	\end{align}
For $j<\bar N$, define
	\begin{align*}
 		d_j:=\delta_j-r_{j+1}\sum_{k=j+1}^{\bar N}\delta_k\prod_{n=j+1}^{k}q_n;
	\end{align*}
	when $\bar N<\infty$, set $d_{\bar N}:=\delta_{\bar N}$.  
The same formula also defines $d_0$.  
A direct rearrangement of \eqref{randomhorizonreward}--\eqref{psGi1} gives
	\begin{align}\label{psHi1}
		 H_i^{[1]}:=\pi_iG_i^{[1]}=\sum_{j=i}^{\bar N}\Bigl(\prod_{n=i+1}^{j}q_n\Bigr)d_j.
	\end{align}
In particular,
	\begin{align}\label{randomhorizonHrelation}
 		H_i^{[1]}=d_i+q_{i+1}H_{i+1}^{[1]}.
	\end{align}
Since $\pi_i>0$, $G_i^{[1]}$ and $H_i^{[1]}$ have the same sign.

We impose the Presman--Sonin single-crossing condition: 
	\begin{align}\tag{PS}\label{PScondition}
 		d_i\ge0\quad\Longrightarrow\quad d_j\ge0
		 \quad\text{for every }j>i.
	\end{align}
The stopping convention at equality is the same as in Section~\ref{sec:optimal-multiple-stopping}.

\begin{theorem}[Kurushima and Ano~\cite{kurushimaano2016}]
\label{th33}
Assume \eqref{PScondition}.  Then the single-stopping problem is monotone,
	and an optimal stopping time is
	\begin{align}\label{randomhorizontau1}
 		\tau_{\mathrm{ola}}^{[1]}&=\inf\{i\ge i_*^{[1]}:N_0\ge i,\ I_i=1\},\\
 		i_*^{[1]}&:=\inf\{i\ge1:\pi_i>0,\ H_i^{[1]}\ge0\}. \notag
	\end{align}
The infimum of an empty set is the cemetery time $\partial$. 
 If $\bar N<\infty$, then $H_{\bar N}^{[1]}=\delta_{\bar N}>0$, so the threshold is finite.
\end{theorem}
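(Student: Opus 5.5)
We reduce Theorem~\ref{th33} to Theorem~\ref{monotonetheorem}, applied along the embedded success chain. The two things to check are that the one-step look-ahead region $\{i: G_i^{[1]}\ge0\}$ is an upper set of indices, and that stopping may be restricted to success epochs.

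\textit{Step 1 (the look-ahead region).} Since $\pi_i>0$ on the relevant indices, $G_i^{[1]}$ and $H_i^{[1]}$ have the same sign. It therefore suffices to show that $H_i^{[1]}\ge0$ implies $H_{i+1}^{[1]}\ge0$ whenever $\pi_{i+1}>0$. We use the representation \eqref{psHi1} together with \eqref{PScondition}. Let $j_0:=\min\{j:d_j\ge0\}$. By \eqref{PScondition}, $d_j<0$ for $j<j_0$ and $d_j\ge0$ for $j\ge j_0$.

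\begin{itemize}
\item If $i\ge j_0$, every term in \eqref{psHi1} is nonnegative, so $H_i^{[1]}\ge0$. The same holds at $i+1$.
\item If $i<j_0$, we use the recursion \eqref{randomhorizonHrelation}, $H_i^{[1]}=d_i+q_{i+1}H_{i+1}^{[1]}$, with $d_i<0$ and $q_{i+1}>0$. If $H_i^{[1]}\ge0$, then $q_{i+1}H_{i+1}^{[1]}\ge -d_i>0$, hence $H_{i+1}^{[1]}>0$.
\end{itemize}

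In both cases nonnegativity propagates forward. This is the main step and is elementary. The only care needed is at indices with $\pi_{i+1}=0$, which do not matter because the horizon has ended. When $\bar N<\infty$ we also have $H_{\bar N}^{[1]}=d_{\bar N}=\delta_{\bar N}>0$, so the threshold is finite.

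\textit{Step 2 (pathwise monotonicity).} Along the embedded chain the state is the index of the current success. This index is strictly increasing, and the process is killed at $\partial$. The sufficient condition from Section~\ref{sec:markov} (a nondecreasing state, together with the fact that the stopping region $\{i\ge i_*^{[1]}\}$ does not depend on the step) then yields $A_k\subset A_{k+1}$ almost surely. Hence the embedded single-stopping problem is monotone.

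\textit{Step 3 (integrability and restriction to successes).} The rewards are bounded by $1$, so \eqref{A1} holds. Stopping on a failure, or after $N_0$, yields reward zero, while continuation has nonnegative value. As argued for the classical odds problem, this means no loss in restricting to success epochs with $N_0\ge i$.

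\textit{Step 4 (conclusion).} Theorem~\ref{monotonetheorem} shows that the one-step look-ahead time is optimal. Since the look-ahead region is $\{i\ge i_*^{[1]}\}$, that time is exactly \eqref{randomhorizontau1}.

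The main obstacle arises when $\bar N=\infty$. In that case the embedded chain may have unboundedly many transitions, so only the single-stopping theory of Section~\ref{subsec:single-stopping} applies. Theorem~\ref{monotonetheorem} is valid for infinite horizons under \eqref{A1}, and we handle the cemetery by setting $X_\infty=0$, since the chain is absorbed almost surely once the horizon ends ($N_0<\infty$ a.s.). If some $\pi_i>0$ for all $i$ but $H_i^{[1]}<0$ throughout, then $i_*^{[1]}=\partial$ and the theorem still holds with $\tau=\partial$.
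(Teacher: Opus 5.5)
Your proposal is correct and follows the paper's own proof. Your Step~1 is exactly its case split on the sign of $d_i$: the recursion \eqref{randomhorizonHrelation} forces $H_{i+1}^{[1]}>0$ when $d_i<0$, and \eqref{PScondition} makes every term of \eqref{psHi1} at $i+1$ nonnegative when $d_i\ge0$. Your Steps~2--4 spell out the embedded-chain monotonicity, boundedness and restriction to success epochs that the paper compresses into an appeal to the monotone stopping theorem.

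One minor point concerns your closing case $i_*^{[1]}=\partial$ with $\bar N=\infty$: it cannot occur. If $G_i^{[1]}<0$ for every $i$, the bounded embedded rewards would be a submartingale with positive initial mean, yet they tend to $0$ almost surely because $N_0<\infty$ a.s. That remark is therefore vacuous rather than a genuine extra case.
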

\begin{proof}
Suppose that $H_i^{[1]}\ge0$.  If $d_i<0$, then
	\eqref{randomhorizonHrelation} gives $H_{i+1}^{[1]}>0$.  
If $d_i\ge0$, condition \eqref{PScondition} implies that every term in the series defining
	$H_{i+1}^{[1]}$ is nonnegative.  
Hence
	\begin{align*}
 		H_i^{[1]}\ge0\quad\Longrightarrow\quad H_{i+1}^{[1]}\ge0,
	\end{align*}
	so the stopping region has a threshold form.  
The one-step look-ahead rule is therefore optimal by the monotone stopping theorem.
\end{proof}

The following easily checked conditions imply \eqref{PScondition} when the
	support of $N_0$ is finite:
		\begin{align}
 		&\delta_i\text{ is nondecreasing on }\{0,1,\ldots,\bar N\},
 			\tag{A2}\label{A2}\\
 		&r_i\text{ is nonincreasing in }i.\tag{A3}\label{A3}
	\end{align}
To verify this, put
	\begin{align*}
 		A_i:=\sum_{k=i+1}^{\bar N}\delta_k\prod_{n=i+1}^{k}q_n =q_{i+1}(\delta_{i+1}+A_{i+1}).
	\end{align*}
If $d_i=\delta_i-r_{i+1}A_i\ge0$, then
	\begin{align*}
 		\delta_{i+1}\ge\delta_i \ge p_{i+1}(\delta_{i+1}+A_{i+1}),
	\end{align*}
	which implies
	$\delta_{i+1}\ge r_{i+1}A_{i+1}\ge r_{i+2}A_{i+1}$ and hence $d_{i+1}\ge0$.  
Iteration proves \eqref{PScondition}.  
Notice that \eqref{A2}--\eqref{A3} imply the single-crossing property; they do not in general imply
	that the numerical sequence $(d_i)$ itself is nondecreasing.

\subsubsection{Multiple stopping}
Suppose that $m$ selection opportunities are available.  
At an embedded success state $i$, define recursively
	\begin{align*}
 		x_i^{[m]} &=x_i^{[1]}+  \sum_{j=i+1}^{\bar N}P_{ij}\frac{\pi_j}{\pi_i}V_j^{[m-1]},\\
		 V_i^{[m]} &=\max\left\{x_i^{[m]}, \sum_{j=i+1}^{\bar N}P_{ij}\frac{\pi_j}{\pi_i}V_j^{[m]}\right\},
	\end{align*}
	with $V_{\bar N}^{[m]}=1$ when $\bar N<\infty$.  
Here $x_i^{[m]}$ is the winning probability obtained by selecting the current success 
	and then using the remaining $m-1$ opportunities optimally.  
Define
	\begin{align*}
 		G_i^{[m]}:=x_i^{[m]}-\sum_{j=i+1}^{\bar N}P_{ij}\frac{\pi_j}{\pi_i}x_j^{[m]},
		 \qquad H_i^{[m]}:=\pi_iG_i^{[m]}.
	\end{align*}
For integers $a,b$, recall that $a\lor b=\max\{a,b\}$.

\begin{theorem}[Kurushima and Ano~\cite{kurushimaano2016}]
\label{th34}
Assume $\bar N<\infty$ and \eqref{PScondition}.  
Set $H_i^{[1]}$ as in \eqref{psHi1} and $i_*^{[1]}$ as in \eqref{randomhorizontau1}.  
For $\ell=2,\ldots,m$, define recursively
	\begin{align}\label{psHiell}
		 H_i^{[\ell]}&=H_i^{[1]}+ \sum_{j=(i+1)\lor i_*^{[\ell-1]}}^{\bar N}
      			P_{ij}H_j^{[\ell-1]},\\
		 i_*^{[\ell]}&:=\inf\{i\ge1:\pi_i>0,\ H_i^{[\ell]}\ge0\}. \notag
	\end{align}
Then the problem is monotone at every level.  
Define
	\begin{align*}
		\vec\tau_{\mathrm{ola}}^{[m]}&:=(\tau_{\mathrm{ola}}^{[m]},\ldots,
			\tau_{\mathrm{ola}}^{[1]}).
		\end{align*}
This recursively defined vector is optimal, where
	\begin{align}\label{randomhorizonmultitau}
 		\tau_{\mathrm{ola}}^{[m]}&=\inf\{i\ge i_*^{[m]}:N_0\ge i,\ I_i=1\},\\
	 	\tau_{\mathrm{ola}}^{[\ell]}&=\inf\left\{i\ge
  			 \max\{s(\tau_{\mathrm{ola}}^{[\ell+1]}),i_*^{[\ell]}\}:
  			 N_0\ge i,\ I_i=1\right\},
 			\qquad \ell=m-1,\ldots,1. \notag
	\end{align}
An empty defining set gives $\partial$.
\end{theorem}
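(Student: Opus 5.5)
We follow the pattern of the proof of Theorem~\ref{th31}: induct on the level and show at each stage that the sign pattern of $H_i^{[\ell]}$ is a single crossing from negative to nonnegative. Then the embedded problem is monotone at every level, and Theorem~\ref{monotonetheorem2} yields optimality of $\vec\tau_{\mathrm{ola}}^{[m]}$. The embedded chain is the success process killed at the horizon, and it is absorbed after at most $\bar N$ transitions. Hence the finite-horizon results of Sections~\ref{subsec:multiple-stopping}--\ref{sec:markov} apply verbatim. The reduction to success epochs is justified as in the classical case, since selecting a failure, or an index beyond $N_0$, yields zero and $V^{[\ell-1]}\le V^{[\ell]}$.

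\emph{Step 1: the recursion.} Assume monotonicity at levels $1,\ldots,\ell-1$, with thresholds $i_*^{[\ell-1]}$. Corollary~\ref{corollary21}, in the form of Remark~\ref{remarkindicator} and with the kernel $P_{ij}\pi_j/\pi_i$, gives
\begin{align*}
G_i^{[\ell]}=G_i^{[1]}+\sum_{j=(i+1)\lor i_*^{[\ell-1]}}^{\bar N}P_{ij}\frac{\pi_j}{\pi_i}G_j^{[\ell-1]}.
\end{align*}
Multiplying by $\pi_i$ yields \eqref{psHiell} exactly. Because $\pi_i>0$, the sets $A_i^{[\ell]}$ are $\{H_i^{[\ell]}\ge0\}$.

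\emph{Step 2: single crossing at level $\ell$.} This is the main obstacle. Unlike the odds case, $H^{[1]}$ is not monotone in value; only its sign is. We therefore use the first-order relation \eqref{randomhorizonHrelation} instead of monotonicity of values. Since $P_{ij}=q_{i+1}P_{i+1,j}$ for $j>i+1$ and $P_{i,i+1}=p_{i+1}$, we split into two regimes.

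For $i+1\ge i_*^{[\ell-1]}$, we get
\begin{align*}
H_i^{[\ell]}=d_i+p_{i+1}H_{i+1}^{[\ell-1]}+q_{i+1}H_{i+1}^{[\ell]}.
\end{align*}
For $i+1<i_*^{[\ell-1]}$, we get $H_i^{[\ell]}=d_i+q_{i+1}H_{i+1}^{[\ell]}$.

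We also carry along the inductive claim $H^{[\ell]}\ge H^{[\ell-1]}$, proved exactly as in Corollary~\ref{corollary22}, because the added terms are nonnegative on the level-$(\ell-1)$ region. This gives $i_*^{[\ell]}\le i_*^{[\ell-1]}\le\cdots\le i_*^{[1]}$.

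Now suppose $H_i^{[\ell]}\ge0$; we want $H_{i+1}^{[\ell]}\ge0$. If $i+1\ge i_*^{[\ell-1]}$, then $H_{i+1}^{[\ell]}\ge H_{i+1}^{[\ell-1]}\ge0$ by the ordering and the monotonicity at level $\ell-1$.

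Otherwise $i+1<i_*^{[\ell-1]}$, and the second regime applies. If $d_i<0$, then $q_{i+1}H_{i+1}^{[\ell]}\ge -d_i>0$. If $d_i\ge0$, then \eqref{PScondition} gives $d_j\ge0$ for all $j\ge i$. By \eqref{psHi1} this makes $H_j^{[1]}\ge0$ for $j\ge i$, so $i_*^{[1]}\le i$. This contradicts $i<i_*^{[\ell-1]}\le i_*^{[1]}$, so this subcase is vacuous.

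Either way $H_{i+1}^{[\ell]}\ge0$, which closes the induction. At level one the claim is Theorem~\ref{th33}. The condition $\bar N<\infty$ guarantees $H_{\bar N}^{[\ell]}\ge H_{\bar N}^{[1]}=\delta_{\bar N}>0$, so every threshold is finite.

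\emph{Step 3: the stopping vector.} Monotonicity at each level gives $\sigma_\ell=\max\{s(\sigma_{\ell+1}),\tau_{\mathrm{ola}}^{[\ell]}\}$ on the embedded chain. Translated back to calendar time, this is the first success $i$ with $N_0\ge i$ and $i\ge\max\{s(\tau_{\mathrm{ola}}^{[\ell+1]}),i_*^{[\ell]}\}$, which is \eqref{randomhorizonmultitau}. Theorem~\ref{monotonetheorem2} then gives optimality. The delicate point to check is the vacuous-case argument in Step 2, which needs the ordering of thresholds established inductively at the lower levels.
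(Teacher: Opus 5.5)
Your proposal is correct and follows essentially the same route as the paper. Both proofs induct on the level while carrying $H^{[\ell]}\ge H^{[\ell-1]}$, obtain \eqref{psHiell} from Corollary~\ref{corollary21}, and split into the regime $i+1\ge i_*^{[\ell-1]}$ (handled by the ordering) and the regime $i+1<i_*^{[\ell-1]}$ (handled by $H_i^{[\ell]}=d_i+q_{i+1}H_{i+1}^{[\ell]}$).

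The one difference is the subcase $d_i\ge0$ with $i+1<i_*^{[\ell-1]}$. You show it is vacuous via $i_*^{[\ell-1]}\le i_*^{[1]}\le i$, as in the proof of Proposition~\ref{propdet}. The paper instead argues directly that \eqref{PScondition} gives $H_{i+1}^{[1]}\ge0$ and that the added sum is nonnegative. Both arguments are valid.
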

Here $s(\cdot)$ is the successor map of Section~\ref{subsec:multiple-stopping}, so that
	$s(\tau^{[\ell+1]}_{\mathrm{ola}})=\tau^{[\ell+1]}_{\mathrm{ola}}+1$
	when $\tau^{[\ell+1]}_{\mathrm{ola}}\le\bar N$: the $\ell$th search starts
	strictly after the previous selection.
\begin{proof}
We prove simultaneously, by induction on $m$, that the level-$m$ problem is
	monotone and that
	\begin{align}\label{randomhorizonorderingH}
 		H_i^{[m]}\ge H_i^{[m-1]}
 		\quad\text{for every relevant }i.
	\end{align}
The level-one monotonicity was proved in Theorem~\ref{th33}.  
Assume that the claims hold through level $m-1$.  Corollary~\ref{corollary21}, in
	the form of Remark~\ref{remarkindicator} and applied to the embedded
	success process, yields
	\begin{align*}
 		G_i^{[m]}=G_i^{[1]}+\sum_{j=(i+1)\lor i_*^{[m-1]}}^{\bar N}
     			 P_{ij}\frac{\pi_j}{\pi_i}G_j^{[m-1]}.
	\end{align*}
Multiplication by $\pi_i$ gives precisely
\eqref{psHiell}.

For $m=2$, \eqref{randomhorizonorderingH} follows immediately because the
	additional terms in \eqref{psHiell} are nonnegative.  
For $m\ge3$, the induction hypothesis gives
	$i_*^{[m-1]}\le i_*^{[m-2]}$, and therefore
	\begin{align*}
		H_i^{[m]}-H_i^{[m-1]}&=\sum_{j=(i+1)\lor i_*^{[m-1]}}^{\bar N}
     			P_{ij}H_j^{[m-1]}-\sum_{j=(i+1)\lor i_*^{[m-2]}}^{\bar N}
    			 P_{ij}H_j^{[m-2]}\\
 			&\ge0.
	\end{align*}
Indeed, on the additional range created by the lower threshold,
	$H_j^{[m-1]}\ge0$, and on the common range the difference
	$H_j^{[m-1]}-H_j^{[m-2]}$ is nonnegative.

It remains to prove monotonicity at level $m$.  
If $i+1\ge i_*^{[m-1]}$, then
 	$H_{i+1}^{[m]}\ge H_{i+1}^{[m-1]}\ge0$. 
If $i+1<i_*^{[m-1]}$, the lower limit in the additional sum in
	\eqref{psHiell} is unchanged when $i$ is replaced by $i+1$, and
	$P_{ij}=q_{i+1}P_{i+1,j}$ on that range.  
Consequently,
	\begin{align}\label{randomhorizonlevelrelation}
	 	H_i^{[m]}=d_i+q_{i+1}H_{i+1}^{[m]}.
	\end{align}
If $H_i^{[m]}\ge0$ and $d_i<0$, equation
	\eqref{randomhorizonlevelrelation} gives $H_{i+1}^{[m]}>0$.  If
	$d_i\ge0$, condition \eqref{PScondition} implies
	$H_{i+1}^{[1]}\ge0$, and the additional sum in \eqref{psHiell} is
	nonnegative; hence $H_{i+1}^{[m]}\ge0$.  
Thus
	\begin{align*}
 		H_i^{[m]}\ge0\quad\Longrightarrow\quad H_{i+1}^{[m]}\ge0.
	\end{align*}
The problem is monotone at level $m$, and
	Theorem~\ref{monotonetheorem2} gives the optimal vector
	\eqref{randomhorizonmultitau}.
\end{proof}

\begin{corollary}\label{corollary32}
If $\bar N<\infty$ and \eqref{PScondition} holds, the optimal thresholds satisfy
	\begin{align*}
 		1\le i_*^{[m]}\le i_*^{[m-1]}\le\cdots\le i_*^{[1]}\le\bar N.
	\end{align*}
\end{corollary}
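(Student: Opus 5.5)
The plan is to deduce the ordering of the thresholds from the pointwise ordering \eqref{randomhorizonorderingH} established in the proof of Theorem~\ref{th34}, in the same way that Corollary~\ref{corollary31} follows from \eqref{oddsgim}. The claim has three parts: the middle chain $i_*^{[m]}\le\cdots\le i_*^{[1]}$, the upper bound $i_*^{[1]}\le\bar N$, and the lower bound $1\le i_*^{[m]}$.

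\emph{Middle chain.} By induction, the proof of Theorem~\ref{th34} gives $H_i^{[\ell]}\ge H_i^{[\ell-1]}$ for every relevant $i$ (those with $\pi_i>0$) and every $\ell=2,\ldots,m$. Hence the level-$(\ell-1)$ set $\{i\ge1:\pi_i>0,\ H_i^{[\ell-1]}\ge0\}$ is contained in the level-$\ell$ set $\{i\ge1:\pi_i>0,\ H_i^{[\ell]}\ge0\}$. Taking infima over nested sets yields $i_*^{[\ell]}\le i_*^{[\ell-1]}$, and chaining over $\ell$ gives the middle inequalities. Since every level is monotone, each of these sets is an upper interval of the relevant indices, so the thresholds are genuinely the entry points of the stopping regions. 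The ordering of infima, however, needs only the set inclusion.

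\emph{Upper bound.} Because $\bar N<\infty$, Theorem~\ref{th33} gives $H_{\bar N}^{[1]}=d_{\bar N}=\delta_{\bar N}>0$, and $\pi_{\bar N}=\delta_{\bar N}>0$. So $\bar N$ lies in the level-one set, and $i_*^{[1]}\le\bar N$. This also shows that every threshold is finite rather than $\partial$.

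\emph{Lower bound.} The bound $1\le i_*^{[m]}$ holds by definition, since each infimum ranges over $i\ge1$. The one point needing care is that \eqref{randomhorizonorderingH} is applied only at indices with $\pi_i>0$. Since $\pi_i$ is nonincreasing in $i$, the relevant indices form an initial segment $\{1,\ldots,\bar N\}$, and every infimum is taken within it, so no issue arises. I expect this bookkeeping to be the only real obstacle; otherwise the argument mirrors Corollary~\ref{corollary31}.
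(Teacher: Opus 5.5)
Your proposal is correct and follows the paper's proof: the pointwise ordering \eqref{randomhorizonorderingH} from the proof of Theorem~\ref{th34} gives nested stopping sets, so $i_*^{[\ell]}\le i_*^{[\ell-1]}$, and iterating over $\ell$ gives the chain. Your treatment of the endpoints fills in what the paper leaves implicit: $i_*^{[1]}\le\bar N$ follows from $H_{\bar N}^{[1]}=\delta_{\bar N}>0$ in Theorem~\ref{th33}, and $1\le i_*^{[m]}$ follows from the definition of the infimum over $i\ge1$.
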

\begin{proof}
Inequality \eqref{randomhorizonorderingH} implies that the level-$m$
	stopping region contains the level-$(m-1)$ stopping region.  
Hence $i_*^{[m]}\le i_*^{[m-1]}$, and iteration gives the result.
\end{proof}

For $\bar N=\infty$, the displayed recursions can be applied to finite truncations.  
Passing to an infinite-support optimal rule additionally
	requires convergence of the truncated value functions and attainment of
	the limiting value by the proposed stopping times.  
These properties do not follow from \eqref{PScondition} alone and are therefore not asserted
	in Theorem~\ref{th34}.

For distributions with $\delta_j>0$, it is often convenient to write
	\begin{align*}
 		S_j:=\sum_{k=j+1}^{\bar N}\frac{\delta_k}{\delta_j}\prod_{n=j+1}^{k}q_n,
 			\qquad d_j=\delta_j(1-r_{j+1}S_j).
	\end{align*}
Thus \eqref{PScondition} is satisfied whenever the sequence
	$r_{j+1}S_j$ crosses the level $1$ at most once, from above to below.  
In the special case $r_{j+1}S_j\le1$ for every $j$, all $d_j$ are
	nonnegative and the single-stopping threshold equals $1$.  
For multiple selections, Theorem~\ref{th34} requires a finite support.  
If $N_0$ is truncated to a finite support $\{0,\ldots,n\}$, that is,
	conditioned on $N_0\le n$, then the ratios $\delta_k/\delta_j$ with
	$k\le n$ are unchanged and $S_j$ only loses terms, so the condition
	$r_{j+1}S_j\le1$ is preserved.  
For the truncated distribution, all thresholds therefore equal $1$, 
	and the optimal policy is to select
	successive successes from the beginning until the available opportunities
	are exhausted or the random horizon ends.  
The stopping times themselves
	remain strictly ordered; it is only their deterministic thresholds that coincide.

\paragraph{Uniform distribution.} 
Suppose that $N_0$ is uniform on $\{1,\ldots,N\}$.  
Then $\delta_0=0$, $\delta_k=1/N$ for $1\le k\le N$, and
	\begin{align*}
 		d_j=\frac1N\left(1-r_{j+1}\sum_{k=j+1}^N\prod_{n=j+1}^{k}q_n\right),
 			\qquad j=1,\ldots,N-1,
	\end{align*}
	with $d_N=1/N$.  Conditions \eqref{A2}--\eqref{A3}, and hence
	\eqref{PScondition}, hold whenever the odds are nonincreasing.  
Equivalently, if $d_j\ge0$, then $\sum_{k=j+1}^N\prod_{n=j+1}^{k}q_n\le 1/r_{j+1}$, 
	and the recurrence
	$\sum_{k=j+1}^N\prod_{n=j+1}^{k}q_n=q_{j+1}
		(1+\sum_{k=j+2}^N\prod_{n=j+2}^{k}q_n)$, together with
	$r_{j+2}\le r_{j+1}$, implies $d_{j+1}\ge0$.

In the secretary problem, $p_i=1/i$ for $i\ge2$ and
	$r_i=1/(i-1)$; the deterministic first record is treated separately, 
	as in Section~\ref{subsec:odds-basic}.  
Although $p_1=1$ violates the standing assumption $0<p_i<1$, this is
	harmless: the first observation is always a record, and the functions
	$H_i^{[\ell]}$, $i\ge1$, and the kernel $P_{ij}\pi_j/\pi_i$, $j>i\ge1$,
	involve only $p_j$, $q_j$ and $r_j$ with $j\ge2$, so that
	Theorems~\ref{th33} and~\ref{th34} apply without change.

Presman and Sonin~\cite{presmansonin} showed for one
	selection that
	\begin{align*}
 		\frac{i_*^{[1]}}{N}\longrightarrow e^{-2}, \qquad
 		P_N^{[1]}(\mathrm{win})\longrightarrow2e^{-2}.
	\end{align*}
For $m\ge2$, Kurushima and Ano~\cite{kurushimaano2016} formulated the
	corresponding asymptotic statement as a conjecture.  
The following theorem proves the secretary specialization of that conjecture for every fixed number
	of selections.

\begin{theorem}
\label{th:uniform-random-horizon-odds}
Let $N_0^{(N)}$ be uniformly distributed on $\{1,\ldots,N\}$ and
	independent of the secretary record indicators
	 $I_1=1$, $P(I_i=1)=1/i$, $i=2,\ldots,N$. 
Suppose that at most $m$ record epochs may be selected and that the decision
	maker wins when one of the selected records is the last record among the
	first $N_0^{(N)}$ observations.
Put $\widehat\rho_0(t)\equiv0$ and $\widehat C_0:=0$.  
Recursively, for $k\ge1$, define
	\begin{align}
 		\widehat g_k(t)&:=t-\frac{t^2}{2}+\int_0^t\widehat\rho_{k-1}(s)\,ds,
       		\label{uniform_odds_g}\\
 		\widehat C_k&:=\textnormal{the unique positive zero of }\widehat g_k,\notag\\
 		\widehat\rho_k(t)&:=\bigl(\widehat g_k(t)\bigr)^+.\notag
	\end{align}
Then
	\begin{align}
 		2=\widehat C_1<\widehat C_2<\cdots,
 		\label{uniform_odds_C_order}
	\end{align}
	and, for every fixed $k=1,\ldots,m$, the optimal threshold
	$i_{*,N}^{[k]}$ satisfies
	\begin{align}
 		\frac{i_{*,N}^{[k]}}{N}\longrightarrow e^{-\widehat C_k}.
 		\label{uniform_odds_threshold_limit}
	\end{align}
Moreover,
	\begin{align}
 		P_N^{[m]}(\mathrm{win}) \longrightarrow  \widehat p_m
 			:=\sum_{k=1}^m\widehat C_ke^{-\widehat C_k}.
			 \label{uniform_odds_probability_limit}
	\end{align}
The boundary constants are equivalently determined by the scalar recursion
	\begin{align}
 		\frac{\widehat C_k^2}{2}-\widehat C_k
 		=\int_0^{\widehat C_{k-1}} \widehat\rho_{k-1}(s)\,ds,\qquad k\ge1,
	 		\label{uniform_odds_C_recursion}
	\end{align}
	where the right-hand side is zero for $k=1$.
\end{theorem}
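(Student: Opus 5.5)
Our plan is to pass to logarithmic remaining time $t=-\log(i/N)$ in the exact recursion \eqref{psHiell} of Theorem~\ref{th34}, identify the limits of the normalized functions $H^{[k]}$ as the functions $\widehat g_k$, and then read off the thresholds and the winning probability. Theorem~\ref{th34} applies because the secretary odds $r_i=1/(i-1)$ are nonincreasing, so \eqref{A2}--\eqref{A3} and hence \eqref{PScondition} hold for the uniform horizon. Therefore the optimal rule is the threshold rule with thresholds $i_{*,N}^{[k]}$, and all work reduces to asymptotics of deterministic sequences.

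\emph{Step 1 (level one).} For uniform $N_0$ we have $\delta_j=1/N$ and $\prod_{n=j+1}^k q_n=j/k$. Hence
\[
d_j=\frac1N\Bigl(1-\frac1j\sum_{k=j+1}^N\frac jk\Bigr)=\frac1N\Bigl(1-\sum_{k=j+1}^N\frac1k\Bigr).
\]
By \eqref{psHi1},
\[
N H_i^{[1]}=\sum_{j\ge i}\frac ij\Bigl(1-\sum_{k>j}\tfrac1k\Bigr).
\]
Writing $j=Ne^{-s}$ and $i=Ne^{-t}$, a Riemann-sum argument gives, locally uniformly in $t$,
\[
\frac{N H^{[1]}_{\lfloor Ne^{-t}\rfloor}}{i}\to\int_0^t(1-s)\,ds=\widehat g_1(t).
\]
This function vanishes at $\widehat C_1=2$ and changes sign only once. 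So $i^{[1]}_*/N\to e^{-2}$, which recovers Presman--Sonin.

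\emph{Step 2 (induction).} For $j\ge i$ the kernel satisfies $P_{ij}=\frac ij\cdot\frac1{j-1}$. Dividing \eqref{psHiell} by $i/N$, the extra term becomes $\sum_{j\ge (i+1)\vee i_*^{[k-1]}}\frac1{j-1}\cdot \frac{N H_j^{[k-1]}}{j}$. This is a Riemann sum for $\int_0^{t\wedge \widehat C_{k-1}}\widehat g_{k-1}(s)\,ds$. Since $\widehat g_{k-1}\ge0$ exactly on $[0,\widehat C_{k-1}]$, this equals $\int_0^t\widehat\rho_{k-1}(s)\,ds$, and so the normalized $H^{[k]}$ converges to $\widehat g_k$.

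Uniform convergence on compacts, together with threshold convergence at level $k-1$, handles the moving lower summation limit. Here it helps that the summand is bounded near the boundary, where it tends to $0$. Then $\widehat g_k(t)=t-t^2/2+\int_0^{t\wedge\widehat C_{k-1}}\widehat g_{k-1}$. Its derivative $1-t+\widehat\rho_{k-1}$ exceeds that of $\widehat g_{k-1}$, so $\widehat g_k>\widehat g_{k-1}$ on $(0,\widehat C_{k-1}]$, giving $\widehat C_k>\widehat C_{k-1}$. Beyond $\widehat C_{k-1}$, $\widehat g_k$ is a concave quadratic, so it has a unique zero, and evaluating there gives \eqref{uniform_odds_C_recursion}. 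Because $\widehat g_k$ changes sign transversally at $\widehat C_k$ (its derivative there is $1-\widehat C_k<0$), monotonicity of $H^{[k]}$ in $i$ converts convergence into \eqref{uniform_odds_threshold_limit}.

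\emph{Step 3 (value).} We expect this to be the main obstacle. Rather than evaluating the exact value, we would compute the limiting winning probability of the level-$m$ threshold rule directly. In logarithmic time, records form a rate-one Poisson process, and the horizon $N_0/N$ is uniform, i.e.\ $-\log(N_0/N)$ is exponential. This gives a continuous-time problem analogous to Lemma~\ref{lemma:poisson-last-arrival} whose value $\widehat v^{[k]}$ solves a first-arrival integral equation with reward $\widehat x(t)=t e^{-t}$-type terms. The key point to check is that selecting at boundary $\widehat C_k$ adds exactly $\widehat C_ke^{-\widehat C_k}$, mirroring \eqref{poisson-value-sum}. Concretely, at $t=\widehat C_k$ stopping and continuing are indifferent, and beyond it the level-$k$ value is constant.

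Convergence of the discrete value to this Poisson value follows from the optimality of the threshold rule (Theorem~\ref{th34}) and the convergence of thresholds. The win probability of a fixed-threshold rule is a continuous functional of the record process and horizon, both of which converge (records in logarithmic time to a Poisson process, as in the proof of Theorem~\ref{th32}). Dominated convergence with respect to the thresholds then finishes \eqref{uniform_odds_probability_limit}.
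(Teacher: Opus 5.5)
\paragraph{Thresholds: correct, by a different route.}
Your threshold argument is correct and differs from the paper's. You pass to the limit directly in the explicit recursion \eqref{psHiell}. Using $\prod_{n=j+1}^kq_n=j/k$ and $P_{ij}=i/(j(j-1))$, the rescaled quantities $(N/i)H_i^{[k]}$ at $i=\lfloor Ne^{-t}\rfloor$ are Riemann sums converging locally uniformly to $\widehat g_k$. The transversal zero at $\widehat C_k$ then yields \eqref{uniform_odds_threshold_limit}. One correction: what converts convergence into threshold convergence is the upper-interval property of $\{i:H_i^{[k]}\ge0\}$ from Theorem~\ref{th34}. It is not monotonicity of $H^{[k]}$ in $i$, which fails here; $\widehat g_k$ itself is not monotone.

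This is the analogue of the argument around \eqref{H-to-delta} in Theorem~\ref{th32}. The paper instead derives the limiting Bellman equation and identifies its residual $\widehat Z_k-\int_0^t\widehat Z_k$ with $(\widehat g_k)^+$. It then proves uniform convergence of the finite values via the contraction $\sum_jP^{(N)}_{ij}\le1-c_T$, and reads the thresholds off the strict crossing. Your route is more elementary for the thresholds. But it produces $\widehat C_k$ only as limits of discrete thresholds and says nothing about the limiting control problem.

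\paragraph{The gap: the value limit.}
The gap is in \eqref{uniform_odds_probability_limit}. Your continuous-mapping step is acceptable, since almost surely no limiting record sits at $Y=-\log U$ or at any $\widehat C_k$. It could replace the paper's contraction argument. However, it only shows that $P_N^{[m]}(\mathrm{win})$ converges to the winning probability of the threshold rule with boundaries $\widehat C_m>\cdots>\widehat C_1$ in the limit model. That this number equals $\sum_k\widehat C_ke^{-\widehat C_k}$ is exactly what you leave as ``the key point to check''. Your heuristic does not establish it, for two reasons:
\begin{itemize}
\item Indifference at $\widehat C_k$ is a statement about the limit model, which your Steps~1--2 never analyze.
\item The conditional value $\widehat v^{[k]}(t)=\widehat p_k/(1-e^{-t})$ is not constant beyond $\widehat C_k$, contrary to your claim.
\end{itemize}

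\paragraph{How to close it.}
What is missing is the content of the paper's Steps~2--4. In your setting it is most naturally done for the unconditional value $u^{[k]}(t)$: the winning probability of the threshold rule started at logarithmic time $t$, with the horizon not conditioned on. A record at $s$ is observable and last with probability $(1-e^{-s})\,s/(e^s-1)=se^{-s}$. Hence
\begin{itemize}
\item $u^{[k]}(t)=\int_0^te^{-(t-s)}\{se^{-s}+u^{[k-1]}(s)\}\,ds$ for $t\le\widehat C_k$;
\item $u^{[k]}(t)=u^{[k]}(\widehat C_k)$ for $t\ge\widehat C_k$.
\end{itemize}
This is Lemma~\ref{lemma:poisson-last-arrival} with reward $se^{-s}$ in place of $e^{-s}$.

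Put $w_k=e^tu^{[k]}$ and $D_k=t+w_{k-1}-w_k$. Then $D_k(0)=0$ and, by induction on $k$, $D_k'=1-t+\widehat\rho_{k-1}$ on $[0,\widehat C_k]$. So $D_k=\widehat g_k$ there and $D_k(\widehat C_k)=0$. This gives
\[
u^{[k]}(\widehat C_k)=\widehat C_ke^{-\widehat C_k}+u^{[k-1]}(\widehat C_k)=\widehat C_ke^{-\widehat C_k}+\widehat p_{k-1},
\]
where the second equality uses $\widehat C_k>\widehat C_{k-1}$. Only with this identification of $\widehat g_k$ as the residual of the limiting problem does the value limit follow. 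As written, your proof establishes the thresholds and the constants $\widehat C_k$, but not the limiting winning probability.
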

In particular,
	\begin{align*}
		 		 \widehat C_1&=2, \qquad \widehat C_2 =1+\sqrt{\frac73}
		 	=2.527525231651946\ldots,\\
		 \widehat C_3 &=1+\sqrt{\frac{-\widehat C_2^3+3\widehat C_2^2
          		+4\widehat C_2-1}{3}}=3.010675763770513\ldots,
	\end{align*}
	and hence
	\begin{align*}
		 \widehat p_1 &=2e^{-2} =0.270670566473225\ldots,\\
 		\widehat p_2&=2e^{-2}+\widehat C_2e^{-\widehat C_2}
  			 =0.472509637549045\ldots,\\
 		\widehat p_3 &=2e^{-2}+\widehat C_2e^{-\widehat C_2}
    			 +\widehat C_3e^{-\widehat C_3}
   			=0.620810649828549\ldots.
	\end{align*}
\begin{proof}
We divide the proof into five steps.

\medskip
\noindent
\textit{Step 1.  Logarithmic-time limit of the record process and the random horizon.} 
Write the calendar location as $x=i/N$ and introduce the logarithmic time $t=-\log x$. 
The secretary record indicators are independent and satisfy $P(I_i=1)=1/i$.  
Let $0\le a<b<\infty$.  
The expected number of record indicators in the logarithmic interval $(a,b]$ is
	\begin{align*}
 		\sum_{Ne^{-b}<i\le Ne^{-a}}\frac1i
 		\longrightarrow b-a,
	\end{align*}
	whereas the largest summand tends to zero.  
Counts in disjoint index intervals are independent.  
The law of small numbers therefore implies
	that the record points, expressed in logarithmic time, converge on compact
	sets to a Poisson point process of rate one.

Independently,
	\begin{align*}
   		\frac{N_0^{(N)}}{N}\Longrightarrow U,
 		\qquad U\sim U(0,1).
	\end{align*}
Consequently, $Y:=-\log U$ 
	has the exponential distribution with parameter one and is independent of
	the limiting record process.  A record point at logarithmic time $t$ is
	observable precisely on the event $Y\le t$.

Condition on the observation of a record at time $t$.  
If this record is selected, it is the last observable record when, for some $0\le y\le t$,
	the horizon satisfies $Y\in dy$ and the Poisson process has no point in
	$(y,t)$.  
Hence the immediate winning probability is
	\begin{align*}
 		\widehat a(t)&=\frac{1}{1-e^{-t}}\int_0^t e^{-y}e^{-(t-y)}\,dy
 		=\frac{t}{e^t-1}.
	\end{align*}
Similarly, the substochastic density of the next observable record at $s<t$ is
	\begin{align*}
 		\widehat K(t,ds)&=e^{-(t-s)}\frac{1-e^{-s}}{1-e^{-t}}\,ds
 			=\frac{e^s-1}{e^t-1}\,ds.
	\end{align*}
The defect of this kernel is the probability that the horizon occurs before
	the next record. 
For a bounded measurable function $f$, put
	\begin{align*}
 		(\widehat{\mathcal K}f)(t):=\frac{1}{e^t-1}\int_0^t(e^s-1)f(s)\,ds.
	\end{align*}

\medskip
\noindent
\textit{Step 2.  Limiting dynamic programming equation.} 
Let $\widehat v^{[k]}(t)$ denote the maximal conditional winning probability
	at an observable record point $t$, before deciding whether to select that
	record, when $k$ selections remain. 
Set $\widehat v^{[0]}\equiv0$.  Selecting
	the current record gives
	 $\widehat a(t)+(\widehat{\mathcal K}\widehat v^{[k-1]})(t)$, 
	whereas rejecting it gives
	$(\widehat{\mathcal K}\widehat v^{[k]})(t)$.  Thus
	\begin{align}
 		\widehat v^{[k]}(t)=\max\left\{ \frac{t}{e^t-1}+(\widehat{\mathcal K}\widehat v^{[k-1]})(t),
  		 (\widehat{\mathcal K}\widehat v^{[k]})(t)\right\}.
 		\label{uniform_odds_Bellman}
	\end{align}
Introduce $\widehat Z_k(t):=(e^t-1)\widehat v^{[k]}(t)$, with $\widehat Z_0\equiv0$. 
Multiplication of \eqref{uniform_odds_Bellman} by $e^t-1$ yields
	\begin{align}
		 \widehat Z_k(t)=\max\left\{t+\int_0^t\widehat Z_{k-1}(s)\,ds,
   		\int_0^t\widehat Z_k(s)\,ds \right\}.
		 \label{uniform_odds_Z_Bellman}
	\end{align}
Define the nonnegative Bellman residual
	\begin{align}
 		\widehat R_k(t):=\widehat Z_k(t)-\int_0^t\widehat Z_k(s)\,ds.
 		\label{uniform_odds_residual}
	\end{align}
It is nonnegative in the stopping region and equal to zero in the
	continuation region.

Suppose first that $t$ belongs to the stopping region and that every
	$s\le t$ also belongs to that region.  
Then the first branch in \eqref{uniform_odds_Z_Bellman} is used on $[0,t]$, 
	and differentiation of \eqref{uniform_odds_residual} gives
	\begin{align*}
 		\widehat R_k'(t)&=1+\widehat Z_{k-1}(t)-\widehat Z_k(t)\\
 			&=1-t+\widehat Z_{k-1}(t)-\int_0^t\widehat Z_{k-1}(s)\,ds\\
			&=1-t+\widehat R_{k-1}(t).
	\end{align*}
Since $\widehat R_k(0)=0$, integration gives
	\begin{align*}
 		\widehat R_k(t)=t-\frac{t^2}{2}+\int_0^t\widehat R_{k-1}(s)\,ds.
	\end{align*}
Thus, as long as stopping remains optimal, the residual is
	$\widehat g_k$ in \eqref{uniform_odds_g}; after the free boundary it is zero.  
Therefore, the candidate residual is precisely
	\begin{align*}
 		\widehat R_k(t)=\widehat\rho_k(t) =\bigl(\widehat g_k(t)\bigr)^+.
	\end{align*}

\medskip
\noindent
\textit{Step 3.  Existence, uniqueness and ordering of the free boundaries.} 
For $k=1$, $\widehat g_1(t)=t-t^2/2$, 
	so its unique positive zero is $\widehat C_1=2$, and
	$\widehat\rho_1$ is positive on $(0,2)$ and zero on $[2,\infty)$.
Assume inductively that $\widehat\rho_{k-1}$ is positive on
	$(0,\widehat C_{k-1})$ and zero thereafter, and that
	$\widehat\rho_{k-1}\ge\widehat\rho_{k-2}$.  
From \eqref{uniform_odds_g},
	\begin{align*}
	 	\widehat g_k(t)-\widehat g_{k-1}(t)=\int_0^t
   		\{\widehat\rho_{k-1}(s)-\widehat\rho_{k-2}(s)\}\,ds\ge0.
	\end{align*}
The inequality is strict at $t=\widehat C_{k-1}$.  
Hence $\widehat g_k(t)>0$ for $0<t\le\widehat C_{k-1}$. 
For $t\ge\widehat C_{k-1}$, the integral in
	\eqref{uniform_odds_g} is constant and
	 $\widehat g_k'(t)=1-t<0$. 
Moreover, $\widehat g_k(t)\to-\infty$ as $t\to\infty$.  
Therefore $\widehat g_k$ has exactly one positive zero $\widehat C_k>\widehat C_{k-1}$.  
Taking positive parts in the preceding comparison gives
	$\widehat\rho_k\ge\widehat\rho_{k-1}$, completing the induction and
	proving \eqref{uniform_odds_C_order}.

Because $\widehat\rho_{k-1}$ vanishes after
	$\widehat C_{k-1}$, the equation $\widehat g_k(\widehat C_k)=0$ is equivalent to
	\eqref{uniform_odds_C_recursion}.

We also verify that the constructed boundary solves the dynamic programming equation.
For $0\le t\le\widehat C_k$, the residual is nonnegative, so the first
	branch in \eqref{uniform_odds_Z_Bellman} is optimal.  
For $t\ge\widehat C_k$, define $\widehat Z_k$ by the continuation equation
	$\widehat Z_k(t)=\int_0^t\widehat Z_k(s)\,ds$.
It follows that $\widehat Z_k'(t)=\widehat Z_k(t)$ and hence
	\begin{align}
 		\widehat Z_k(t)=\widehat p_ke^t,\qquad t\ge\widehat C_k,
		 \label{uniform_odds_continuation_Z}
	\end{align}
	for a constant $\widehat p_k$.  
At the boundary the two branches agree.
For $t>\widehat C_k$, the difference between the stopping branch and the
	continuation branch has derivative
	\begin{align*}
 		1+\widehat Z_{k-1}(t)-\widehat Z_k(t)=1-\widehat C_ke^{t-\widehat C_k}<0,
	\end{align*}
	where the identity established in the next step is used. 
Since the difference is zero at $\widehat C_k$, it is negative thereafter.  
Thus continuation is indeed optimal for $t>\widehat C_k$.  
Finally, the dynamic programming equation \eqref{uniform_odds_Z_Bellman} determines its solution
	uniquely on every compact interval, by the Gronwall inequality applied
	to the difference of two solutions; hence the constructed function is
	the value function $\widehat Z_k$, and $[0,\widehat C_k]$ is its exact
	stopping region.

\medskip
\noindent
\textit{Step 4.  Limiting winning probability.} 
From \eqref{uniform_odds_continuation_Z},
	\begin{align*}
 		\widehat v^{[k]}(t)=\frac{\widehat p_ke^t}{e^t-1}=\frac{\widehat p_k}{1-e^{-t}},
		 \qquad t\ge\widehat C_k,
	\end{align*}
	so $\widehat p_k=\lim_{t\to\infty}\widehat v^{[k]}(t)$.
At $t=\widehat C_k$, value matching and the stopping branch in
	\eqref{uniform_odds_Z_Bellman} give
	\begin{align*}
 		\widehat p_ke^{\widehat C_k}&=\widehat C_k+\int_0^{\widehat C_k}\widehat Z_{k-1}(s)\,ds.
	\end{align*}
Since $\widehat C_k>\widehat C_{k-1}$, level $k-1$ is already in its
	continuation region at $\widehat C_k$.  Hence
	\begin{align*}
 		\int_0^{\widehat C_k}\widehat Z_{k-1}(s)\,ds
 		=\widehat Z_{k-1}(\widehat C_k)=\widehat p_{k-1}e^{\widehat C_k}.
	\end{align*}
Consequently,
	\begin{align}
 		\widehat p_k=\widehat p_{k-1}+\widehat C_ke^{-\widehat C_k},
 		\qquad \widehat p_0:=0.
 		\label{uniform_odds_p_recursion}
	\end{align}
Iteration proves the expression in
	\eqref{uniform_odds_probability_limit}.  
It also justifies the derivative calculation at the end of Step~3. 
For the first constants,
	\begin{align*}
 		\widehat\rho_1(t)=\begin{cases}
   			t-t^2/2,&0\le t\le2,\\
   			0,&t>2.
  			\end{cases}
	\end{align*}
Therefore $\int_0^2\widehat\rho_1(t)\,dt=2/3$, 
	and \eqref{uniform_odds_C_recursion} gives $\widehat C_2=1+\sqrt{7/3}$.  
Moreover,
	\begin{align*}
 		\widehat\rho_2(t)=\begin{cases}
  				t-t^3/6,&0\le t\le2,\\[1mm]
  				t-t^2/2+2/3,&2\le t\le\widehat C_2,\\
  				0,&t>\widehat C_2.
 				\end{cases}
	\end{align*}
Thus
	\begin{align*}
 		\int_0^{\widehat C_2}\widehat\rho_2(t)\,dt
			=-\frac{\widehat C_2^3}{6}+\frac{\widehat C_2^2}{2}
   			+\frac{2\widehat C_2}{3}-\frac23.
	\end{align*}
Substitution in \eqref{uniform_odds_C_recursion} yields the displayed
	formula for $\widehat C_3$, and \eqref{uniform_odds_p_recursion} yields the stated values of
	$\widehat p_1,\widehat p_2$ and $\widehat p_3$.

\medskip
\noindent
\textit{Step 5.  Convergence of the finite-horizon dynamic programming equations.}
For the uniform horizon and the secretary record indicators, the immediate
	stopping reward at a record epoch $i$ is
	\begin{align*}
		 x_{i,N}^{[1]}=\frac{i}{N-i+1}\sum_{n=i}^N\frac1n.
	\end{align*}
The probability that the next observable record after $i$ occurs at $j>i$ is
	\begin{align*}
		 P_{ij}^{(N)}=\frac{i(N-j+1)}{(N-i+1)j(j-1)}.
	\end{align*}
These quantities satisfy the exact identity
	\begin{align}
		 x_{i,N}^{[1]}+\sum_{j=i+1}^N P_{ij}^{(N)}=1.
		 \label{uniform_odds_kernel_defect}
	\end{align}
Let $V_{i,N}^{[k]}$ be the finite-$N$ value at record state $i$ with $k$
selections remaining.  Its dynamic programming equation is
	\begin{align}
 		V_{i,N}^{[k]} =\max\left\{x_{i,N}^{[1]}+\sum_{j=i+1}^NP_{ij}^{(N)}V_{j,N}^{[k-1]},
 		\ \sum_{j=i+1}^NP_{ij}^{(N)}V_{j,N}^{[k]}\right\}.
		 \label{uniform_odds_finite_Bellman}
	\end{align}

For $t\ge0$, put $i_N(t)=\lfloor Ne^{-t}\rfloor$ and extend the function
	$\widehat a(t)=t/(e^t-1)$ of Step~1
	continuously to $t=0$ by $\widehat a(0):=1$.  
For every fixed $T<\infty$,
	\begin{align}
 		\sup_{0\le t\le T}\left|x_{i_N(t),N}^{[1]}-\widehat a(t)\right|&\longrightarrow0,
		 	\label{uniform_odds_reward_uniform}\\
 		\sup_{0\le t\le T}\left|\sum_{j=i_N(t)+1}^N P_{i_N(t),j}^{(N)}f\left(-\log\frac jN\right)
 			-\frac1{e^t-1}\int_0^t(e^s-1)f(s)\,ds\right|&\longrightarrow0
 			\label{uniform_odds_kernel_uniform}
	\end{align}
	for every continuous $f$ on $[0,T]$, where the integral operator at
	$t=0$ is defined to be zero.  
On every interval $[\delta,T]$ these are ordinary uniform Riemann-sum limits.  
To include the endpoint $t=0$, note that
	\begin{align*}
		 \frac{i}{N}\le x_{i,N}^{[1]}\le1,
	\end{align*}
	because $x_{i,N}^{[1]}$ is the average of $i/n$, $n=i,\ldots,N$.
Hence, when $0\le t\le\delta$ and $i=i_N(t)$,
	$1-x_{i,N}^{[1]}\le1-i/N=O(\delta)+O(N^{-1})$.
By \eqref{uniform_odds_kernel_defect}, the same bound controls the total
	mass of the discrete kernel, while the total mass of the limiting kernel is
	$1-\widehat a(t)=O(\delta)$.  
Letting first $N\to\infty$ and then $\delta\downarrow0$
	proves \eqref{uniform_odds_reward_uniform}--\eqref{uniform_odds_kernel_uniform}
	on all of $[0,T]$.

We now prove, by induction on $k$, that
	\begin{align}
 		V_{i_N(t),N}^{[k]}\longrightarrow\widehat v^{[k]}(t)
 		\label{uniform_odds_value_convergence}
	\end{align}
	uniformly for $0\le t\le T$.  
Write
	\begin{align*}
 		e_N^{[k]}(t):=\left|V_{i_N(t),N}^{[k]}-\widehat v^{[k]}(t)\right|,
 			\qquad
 		E_N^{[k]}(T):=\sup_{0\le t\le T}e_N^{[k]}(t).
	\end{align*}
For $k\ge1$ we extend $\widehat v^{[k]}$ continuously to $t=0$ by
	$\widehat v^{[k]}(0):=1$: by Step~2, $\widehat Z_k(t)=t+O(t^2)$ as
	$t\downarrow0$, so that
	$\widehat v^{[k]}(t)=\widehat Z_k(t)/(e^t-1)\to1$.  
With this convention, $\widehat v^{[k]}$ and $\widehat v^{[k-1]}$ are continuous on $[0,T]$, 
	and \eqref{uniform_odds_kernel_uniform} applies with $f=\widehat v^{[k]}$ and
	$f=\widehat v^{[k-1]}$.
Using $|\max\{a,b\}-\max\{a',b'\}|\le\max\{|a-a'|,|b-b'|\}$ in
	\eqref{uniform_odds_finite_Bellman} and
	\eqref{uniform_odds_Bellman}, together with
	\eqref{uniform_odds_reward_uniform}--\eqref{uniform_odds_kernel_uniform},
	we obtain
	\begin{align}
 		e_N^{[k]}(t) \le \varepsilon_{N,k}(T)+\sum_{j=i_N(t)+1}^N P_{i_N(t),j}^{(N)}
 			e_N^{[k]}\!\left(-\log\frac jN\right),
 		\label{uniform_odds_error_inequality}
	\end{align}
	where $\varepsilon_{N,k}(T)\to0$ by the induction hypothesis at level
	$k-1$ (with the level-$0$ error equal to zero).

The key point is that the kernel is uniformly contractive on every fixed
	logarithmic interval.  
Since $\widehat a$ is positive on $[0,T]$, put
	\begin{align*}
 		c_T:=\frac12\min_{0\le t\le T}\widehat a(t)>0.
	\end{align*}
By \eqref{uniform_odds_reward_uniform}, for all sufficiently large $N$,
	$x_{i_N(t),N}^{[1]}\ge c_T$ uniformly in $0\le t\le T$.
Therefore \eqref{uniform_odds_kernel_defect} gives
	\begin{align*}
 		\sum_{j=i_N(t)+1}^N P_{i_N(t),j}^{(N)}\le1-c_T
	\end{align*}
	uniformly on $[0,T]$.  
Taking the supremum in \eqref{uniform_odds_error_inequality} yields
	\begin{align*}
		 E_N^{[k]}(T) \le \varepsilon_{N,k}(T)+(1-c_T)E_N^{[k]}(T),
	\end{align*}
	and hence
	\begin{align*}
 		E_N^{[k]}(T)\le c_T^{-1}\varepsilon_{N,k}(T)\longrightarrow0.
	\end{align*}
This proves \eqref{uniform_odds_value_convergence} uniformly on every
	compact logarithmic interval, including the endpoint $t=0$.

By Theorem~\ref{th34}, the finite problem is monotone at every level, so that,
	by \eqref{Vlevel}, the first branch in \eqref{uniform_odds_finite_Bellman}
	is at least the second for $i\ge i_{*,N}^{[k]}$; for $1\le i<i_{*,N}^{[k]}$ it is
	strictly smaller, because there the selection value is strictly smaller than
	its one-step look-ahead continuation, which is at most the second branch.
By \eqref{uniform_odds_reward_uniform}, \eqref{uniform_odds_kernel_uniform} and
	\eqref{uniform_odds_value_convergence}, the difference of the two branches at
	$i=i_N(t)$ converges, uniformly on $[0,T]$, to
	\begin{align*}
		\frac{1}{e^t-1}\left\{t+\int_0^t\widehat Z_{k-1}(s)\,ds
			-\int_0^t\widehat Z_k(s)\,ds\right\},
	\end{align*}
	which, by Step~3, is positive on $(0,\widehat C_k)$ and negative on
	$(\widehat C_k,\infty)$; the crossing is transversal because
	$\widehat g_k'(\widehat C_k)=1-\widehat C_k<0$.
Therefore
	\begin{align*}
 		-\log\frac{i_{*,N}^{[k]}}{N}\longrightarrow\widehat C_k,
	\end{align*}
	which is equivalent to \eqref{uniform_odds_threshold_limit}.

Finally, we identify the unconditional value.  
Fix $T>\widehat C_m$ and put $j_N:=\lfloor Ne^{-T}\rfloor$.  
By \eqref{uniform_odds_threshold_limit}, $i_{*,N}^{[m]}>j_N$ for all large
	$N$, so the optimal rule makes no selection at or before $j_N$; in particular
	the deterministic first record, at logarithmic time $\log N\to\infty$, is
	not selected.  
Let $\sigma_N$ be the first record epoch in $\{j_N,\ldots,N\}$, if any.
Then
	\begin{align*}
		 P(\sigma_N=j_N)=\frac1{j_N},\qquad
 		P(\sigma_N=i)=\frac{j_N-1}{(i-1)i},\quad j_N<i\le N,
	\end{align*}
	and the event that $\sigma_N$ does not exist contributes nothing.  
Since $V_{i,N}^{[m]}$ is the value conditional on $N_0^{(N)}\ge i$, and
	$P(N_0^{(N)}\ge i)=(N-i+1)/N$ is the probability that the record at
	$i$ is observable, the strong Markov property gives the exact identity
	\begin{align*}
 		P_N^{[m]}(\mathrm{win})=\sum_{i=j_N}^N P(\sigma_N=i)\,\frac{N-i+1}{N}\,V_{i,N}^{[m]}.
	\end{align*}
In logarithmic time, $\Theta_N:=-\log(\sigma_N/N)$ converges in law on
	$[0,T]$ to the sub-probability law $e^{-(T-t)}\,dt$ of the first point
	of a rate-one Poisson process below level $T$, while
	$(N-i_N(t)+1)/N\to1-e^{-t}$ and
	$V_{i_N(t),N}^{[m]}\to\widehat v^{[m]}(t)$ uniformly on $[0,T]$ by
	\eqref{uniform_odds_value_convergence}.  
Hence
	\begin{align*}
		 \lim_{N\to\infty}P_N^{[m]}(\mathrm{win})
		 	 =\int_0^Te^{-(T-t)}(1-e^{-t})\,\widehat v^{[m]}(t)\,dt
			 =e^{-T}\int_0^T\widehat Z_m(t)\,dt .
	\end{align*}
Since $T>\widehat C_m$ lies in the continuation region,
	\eqref{uniform_odds_Z_Bellman} and \eqref{uniform_odds_continuation_Z}
	give $\int_0^T\widehat Z_m(t)\,dt=\widehat Z_m(T)=\widehat p_me^T$.  
Therefore $P_N^{[m]}(\mathrm{win})\to\widehat p_m$, which proves
	\eqref{uniform_odds_probability_limit}.  
The observability factor $(N-i+1)/N$ is essential: it turns the conditional
	value $\widehat v^{[m]}$ into $e^{-t}\widehat Z_m$, and the limit is exact
	for every fixed $T>\widehat C_m$.
\end{proof}

\paragraph{Geometric distribution.} 
Let
	$\delta_j=\widehat p\,\widehat q^{\,j}$,
 		$j=0,1,2,\ldots$, where $0<\widehat p<1$ and $\widehat q=1-\widehat p$. 
Then
	\begin{align*}
 			S_j=\sum_{k=j+1}^{\infty}\widehat q^{\,k-j}\prod_{n=j+1}^{k}q_n 
	 			\quad\text{and}\quad 
 			d_j=\widehat p\,\widehat q^{\,j} (1-r_{j+1}S_j).
	\end{align*}
The extra factor $\widehat p$ does not appear inside $S_j$, because
	$\delta_k/\delta_j=\widehat q^{k-j}$.  
The single-crossing property of $r_{j+1}S_j$ is sufficient for the threshold rule.  
In particular, if $r_{j+1}S_j\le1$ for all $j$, the single-stopping
	threshold is $1$; for the distribution truncated to a finite support as
	above, every threshold is $1$ and the optimal policy
	selects the first $m$ successive successes, subject to the random horizon.
\paragraph{Poisson distribution.} 
Let $N_0$ have the Poisson distribution with mean $\lambda>0$:
	 $\delta_j=e^{-\lambda}\lambda^j/j!$, $j=0,1,2,\ldots$. 
Then
	\begin{align*}
 		S_j=\sum_{k=j+1}^{\infty}\frac{\lambda^{k-j}j!}{k!}\prod_{n=j+1}^{k}q_n \quad\text{and}\quad 
 		d_j=e^{-\lambda}\frac{\lambda^j}{j!} (1-r_{j+1}S_j).
	\end{align*}
If both $(r_{j+1})$ and $(S_j)$ are nonincreasing, then
	$r_{j+1}S_j$ is nonincreasing and \eqref{PScondition} follows.  
This is a sufficient condition, not an automatic property of the Poisson model; the
	sequence $(S_j)$ must be checked for the specified Bernoulli probabilities.
The stronger condition $r_{j+1}S_j\le1$ for every $j$ again makes all
	thresholds equal to $1$, for multiple selections after truncation to a finite support.

\paragraph{Negative binomial distribution.} 
Let $\widehat r$ be a positive integer and let
	\begin{align*}
		 \delta_j &=\binom{\widehat r+j-1}{j}  \widehat p^{\,\widehat r}\widehat q^{\,j},
  		 \qquad j=0,1,2,\ldots.
	\end{align*}
For $\widehat r=1$ this reduces to the geometric case.  Here
	\begin{align*}
 		S_j&=\sum_{k=j+1}^{\infty}\frac{(\widehat r+k-1)!\,j!}{(\widehat r+j-1)!\,k!}
 	  		\widehat q^{\,k-j}\prod_{n=j+1}^{k}q_n,\\
 		d_j&=\delta_j(1-r_{j+1}S_j).
	\end{align*}
For fixed $k>j$,
	\begin{align*}
 		\frac{(\delta_k/\delta_{j+1})}
      			{(\delta_k/\delta_j)}=\frac{j+1}{(\widehat r+j)\widehat q}.
	\end{align*}
This ratio may be either smaller or larger than one, so no general
	monotonicity of $(S_j)$ follows without additional assumptions.  
The	criterion to verify is therefore the single-crossing property of
	$r_{j+1}S_j$.  If $r_{j+1}S_j\le1$ for all $j$, then, for the
	distribution truncated to a finite support, all thresholds are
	$1$ and the optimal selections are the successive successes from the
	beginning, rather than coincident stopping times.

\subsection{Last-success problem in Markov-dependent trials}
\label{subsec:odds-markov}
We next consider the last-success problem when the observations form a
	nonhomogeneous two-state Markov chain.  
Hsiau and Yang~\cite{hsiauyang} studied the one-choice problem by backward induction.  
Ano, Kakie and Miyoshi~\cite{anokakiemiyoshi} identified a sufficient single-crossing
	condition under which the optimal rule has a sum-the-odds form and
	extended the construction to multiple selection opportunities.  
Unlike the independent case, monotonicity of the relevant future-odds sum is not
	automatic and must be assumed or verified.

Let $I_1,\ldots,I_N$ take values in $\{0,1\}$, where $I_i=1$ denotes a
	success, and let $\F_i=\sigma(I_1,\ldots,I_i)$.  
It is convenient to introduce a virtual initial success $I_0=1$.  
For $i=0,\ldots,N-1$, put
	\begin{align*}
 		\alpha_i&:=P(I_{i+1}=0\mid I_i=1), &P(I_{i+1}=1\mid I_i=1)&=1-\alpha_i,
	\end{align*}
	and, for $i=1,\ldots,N-1$, put
	\begin{align*}
		 \beta_i&:=P(I_{i+1}=1\mid I_i=0), &P(I_{i+1}=0\mid I_i=0)&=1-\beta_i.
	\end{align*}
Thus $\alpha_0=P(I_1=0)$ specifies the initial distribution. 
We assume
	\begin{align}\label{markovparameterassumption}
 		0<\alpha_i<1\quad(0\le i\le N-1),\qquad 0<\beta_i<1\quad(1\le i\le N-1).
	\end{align}
The parameter $\alpha_0$ only determines the law of $I_1$, and the
	virtual state $0$ is never a stopping state.  Its positivity is used only
	to define the conditional odds $r_{0j}$ at the virtual state; the
	optimality statements below, which involve only the states $i\ge1$,
	remain valid for $\alpha_0\in[0,1]$.
For $i=1,\ldots,N-1$, the transition matrix, with rows and columns ordered
	as $0,1$, is
	\begin{align*}
 		\mathbf P_i=\begin{pmatrix}
  			1-\beta_i&\beta_i\\
  			\alpha_i&1-\alpha_i
 			\end{pmatrix}.
	\end{align*}
The objective is to select the last success, using at most $m$ distinct selection opportunities.

\subsubsection{The embedded success chain and single stopping}
Suppose that the process is currently at a success state $i$, so that
	$I_i=1$; for $i=0$ this refers to the virtual initial state.  Define
	\begin{align*}
 		x_i&:=P(I_{i+1}=\cdots=I_N=0\mid I_i=1)
  	 		=\alpha_i\prod_{k=i+1}^{N-1}(1-\beta_k),
 			&&0\le i<N,\\
	 	x_N&:=1.
	\end{align*}
Thus $x_i$ is the conditional winning probability when the current success is selected.  
For $0\le i<j\le N$, let
	\begin{align*}
 		P_{ij}:=P(I_{i+1}=\cdots=I_{j-1}=0,I_j=1\mid I_i=1),
	\end{align*}
	the transition probability that the next success after $i$ occurs at $j$.  
Explicitly,
	\begin{align*}
 		P_{i,i+1}=1-\alpha_i \quad\text{and}\quad  
 			P_{ij}=\alpha_i\prod_{k=i+1}^{j-2}(1-\beta_k)\,\beta_{j-1},\quad j\ge i+2.
	\end{align*}
The remaining probability is $x_i$, corresponding to no later success,
	so that $x_i+\sum_{j=i+1}^NP_{ij}=1$. 
The natural conditional odds attached to a possible next success at $j$ are
	\begin{align}\label{markovconditionalodds}
		 r_{ij}:=\frac{P_{ij}x_j}{x_i}, \qquad 0\le i<j\le N.
	\end{align}
Equivalently, $r_{ij}=p_{ij}/(1-p_{ij})$, where $p_{ij}$ is the
	conditional probability that $I_j=1$ given $I_i=1$ and given that all
	other observations between $i+1$ and $N$ are failures.  
With the terminal conventions $\alpha_N=1$ and $\beta_N=0$, direct calculation gives
	\begin{align}\label{markovoddsexplicit}
		r_{ij}=\begin{cases}
 		\displaystyle
 		\frac{(1-\alpha_i)\alpha_{i+1}}{\alpha_i(1-\beta_{i+1})},
   		&j=i+1,\\[12pt]
 		\displaystyle
 		\frac{\beta_{j-1}\alpha_j} {(1-\beta_{j-1})(1-\beta_j)},
   		&j\ge i+2.
 		\end{cases}
	\end{align}
This formula includes $j=N$ through the terminal conventions.

Let $V_i^{[1]}$ denote the optimal conditional winning probability at a
	success state $i$ with one selection opportunity remaining.  
The dynamic programming equation for the embedded chain is
	\begin{align}\label{markovsingleDP}
	 	V_i^{[1]}=\max\left\{x_i,\sum_{j=i+1}^NP_{ij}V_j^{[1]}\right\},
 		\qquad 1\le i<N,\qquad V_N^{[1]}=1.
	\end{align}
The one-step look-ahead difference is
	\begin{align*}
 		G_i^{[1]}:=x_i-\sum_{j=i+1}^NP_{ij}x_j=x_i\left(1-H_i^{[1]}\right),\qquad
		 H_i^{[1]}:=\sum_{j=i+1}^Nr_{ij},\qquad H_N^{[1]}:=0.
	\end{align*}
In contrast with independent Bernoulli trials, $H_i^{[1]}$ need not be nonincreasing in $i$.  
We therefore impose the following sufficient condition:
	\begin{align}\tag{M}\label{markovmonotonicitycondition}
 		H_i^{[1]}\ge H_{i+1}^{[1]}, \qquad i=1,\ldots,N-1.
	\end{align}
More generally, it is enough that $H_i^{[1]}-1$ have the single-crossing
	property from positive to nonpositive.  
We use the stronger form \eqref{markovmonotonicitycondition} 
	because it is transparent and easy to check from \eqref{markovoddsexplicit}.

\begin{theorem}[Ano, Kakie and Miyoshi~\cite{anokakiemiyoshi}]
\label{th_markov_single}
Assume \eqref{markovparameterassumption} and \eqref{markovmonotonicitycondition}.  
Define
	\begin{align*}
 		i_*^{[1]}:=\min\{i\in\{1,\ldots,N\}:H_i^{[1]}\le1\}.
	\end{align*}
Then an optimal stopping time is
	 $\tau_{\mathrm{ola}}^{[1]}=\min\{i\ge i_*^{[1]}:I_i=1\}$, 
	where an empty minimum is $\partial$.  
At equality
	$H_i^{[1]}=1$, stopping and continuation have the same value; throughout
	this article we adopt the convention of stopping.
\end{theorem}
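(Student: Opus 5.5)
The plan is to apply the monotone stopping theorem (Theorem~\ref{monotonetheorem}) to the embedded success chain, exactly as was done for the independent case in Section~\ref{subsec:odds-basic}. First I will justify working on the embedded chain. Stopping on a failure yields reward zero, so it is weakly dominated by continuing. By the Markov property, the conditional winning probability at a success state $i$ depends only on $i$, because the future law given $I_i=1$ does not depend on the earlier history. Hence the state after each transition is the time of the current success, the reward is $x_i$, and the substochastic kernel is $P_{ij}$, whose defect $x_i$ corresponds to absorption in $\partial$ with reward zero. The chain is absorbed after at most $N$ transitions, so the finite-horizon results apply verbatim, as noted in the paragraph on embedded chains. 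The rewards are bounded by one, so \eqref{A1} holds trivially.

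Next I will compute the one-step look-ahead difference. By the definition \eqref{markovconditionalodds}, $P_{ij}x_j=x_i r_{ij}$. Therefore
\begin{align*}
 G_i^{[1]}=x_i-\sum_{j=i+1}^N P_{ij}x_j=x_i\Bigl(1-\sum_{j=i+1}^N r_{ij}\Bigr)=x_i\bigl(1-H_i^{[1]}\bigr).
\end{align*}
This is a deterministic function of the state $i$. Under \eqref{markovparameterassumption} we have $x_i>0$ for $1\le i\le N$. Indeed, $\alpha_i>0$ and $1-\beta_k>0$ give $x_i>0$ for $i<N$, and $x_N=1$. So $G_i^{[1]}\ge0$ if and only if $H_i^{[1]}\le1$, which holds if and only if $i\ge i_*^{[1]}$ by \eqref{markovmonotonicitycondition}. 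The threshold $i_*^{[1]}$ exists because $H_N^{[1]}=0$.

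The key step is to verify pathwise monotonicity, namely that $A_i=\{G_i^{[1]}\ge0\}$ is preserved along the chain. The embedded state (the current success time) is strictly increasing, and the stopping set $\{i\ge i_*^{[1]}\}$ is an upper set in the state variable that does not change with the transition index. Hence once the state lies in it, every subsequent state lies in it as well; this is exactly the sufficient condition described in Section~\ref{sec:markov}. Absorption at $\partial$ is harmless, because there the reward is zero and the one-step look-ahead event is taken to be $\Omega$.

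Theorem~\ref{monotonetheorem} then shows that the one-step look-ahead time on the embedded chain is optimal. Translated back to calendar time, this is the first success at or after $i_*^{[1]}$. At equality $H_i^{[1]}=1$ the two values coincide, since $G_i^{[1]}=0$ and by \eqref{V1} $V_i=x_i$ on the stopping region, so stopping is optimal there as well.

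The main obstacle is rigorously reducing the problem to the embedded chain, that is, showing that restricting stops to success epochs loses nothing. For this I would use the argument already given in Section~\ref{subsec:odds-basic} together with the fact that the Snell envelope evaluated at a success time is a function of the time alone, by the Markov property.
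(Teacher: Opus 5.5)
Your proposal is correct and follows essentially the same route as the paper: you derive $G_i^{[1]}=x_i(1-H_i^{[1]})$ from $P_{ij}x_j=x_ir_{ij}$, observe that under \eqref{markovmonotonicitycondition} the stopping set is an upper interval preserved by the forward-moving embedded success chain, and invoke Theorem~\ref{monotonetheorem}. Your extra details are points the paper's three-line proof leaves implicit: positivity of $x_i$, existence of the threshold via $H_N^{[1]}=0$, bounded rewards giving \eqref{A1}, the restriction to success epochs, and the equality case.
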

\begin{proof}
Equation \eqref{markovconditionalodds} gives
	$G_i^{[1]}=x_i(1-H_i^{[1]})$. 
Under \eqref{markovmonotonicitycondition}, the stopping set
	$\{i:H_i^{[1]}\le1\}$ is an upper interval.  
Hence the embedded problem is monotone, and the one-step look-ahead rule is optimal by
	Theorem~\ref{monotonetheorem}.
\end{proof}

When $\alpha_i+\beta_i=1$ for every $i$, the distribution of
	$I_{i+1}$ is independent of $I_i$.  
In that case
	$r_{ij}$ reduces to the usual Bernoulli odds of trial $j$, and
	Theorem~\ref{th_markov_single} reduces to Bruss' sum-the-odds theorem.

Let $s=i_*^{[1]}-1$.  Conditional on a success at $s$ (with $s=0$
	interpreted as the virtual initial state), every subsequent success lies
	in the stopping region.  
Hence the continuation value is
	\begin{align*}
	 	C_s^{[1]}:=\sum_{j=s+1}^NP_{sj}x_j=x_sR_s,\qquad
	 	R_s:=\sum_{j=s+1}^Nr_{sj}.
	\end{align*}
Ano~\cite{ano2013} proves, under the additional hypotheses imposed in
	that note, the lower bound
	\begin{align*}
		 C_s^{[1]}>R_se^{-R_s}.
	\end{align*}
Consequently, writing $s=s_N$ to indicate the dependence of the
	threshold on the horizon, if $R_{s_N}\to1$ as $N\to\infty$, then the
	rigorous asymptotic conclusion is
	\begin{align*}
		 \liminf_{N\to\infty}C_{s_N}^{[1]}\ge e^{-1}.
	\end{align*}
This is a conditional value at the success state $s$; the unconditional
	winning probability from the beginning also depends on the initial
	distribution through $\alpha_0$ and is obtained from
	\eqref{markovsingleDP}.  
No asymptotic independence of the Markov trials is required or asserted.

\subsubsection{Multiple stopping}\label{sec:msot}
At an actual success state $i\in\{1,\ldots,N\}$, let
	$S_i^{[\ell]}$ be the value of selecting the current success when
	$\ell$ selection opportunities remain, and let $V_i^{[\ell]}$ be the
	optimal value.  
For $0\le i\le N$, let $W_i^{[\ell]}$ be the value of
	rejecting the current state and waiting for the next success; at $i=0$
	this is the value from the virtual initial state.  
Set $W_i^{[0]}:=0$ and $W_N^{[\ell]}:=0$.  Then
	\begin{align}\label{markovmultipleDP}
 		S_i^{[\ell]}&=x_i+W_i^{[\ell-1]},
 			&&1\le i\le N,\\
	 	W_i^{[\ell]}&=\sum_{j=i+1}^NP_{ij}V_j^{[\ell]},
		 	&&0\le i<N, \notag\\
 		V_i^{[\ell]}&=\max\{S_i^{[\ell]},W_i^{[\ell]}\},
 			&&1\le i\le N,\quad \ell\ge1. \notag
	\end{align}
The two terms in $S_i^{[\ell]}$ correspond to disjoint events: the
	current success is the last success, or a later selected success is the last success.  
Thus the value is a winning probability, not the expected number of wins.

Define the level-$\ell$ one-step look-ahead difference by
	\begin{align*}
 		G_i^{[\ell]}:=S_i^{[\ell]}-\sum_{j=i+1}^NP_{ij}S_j^{[\ell]},
	\end{align*}
	in agreement with the definition of $G_i^{[1]}$ above. 
Starting from $H_i^{[1]}=\sum_{j=i+1}^Nr_{ij}$, define recursively, for $\ell=2,\ldots,m$,
	\begin{align}\label{markovHrecursion}
	 	H_i^{[\ell]}=H_i^{[1]}+\sum_{j=(i+1)\lor i_*^{[\ell-1]}}^N
   		r_{ij}\bigl(H_j^{[\ell-1]}-1\bigr),
	\end{align}
	where
	\begin{align*}
 		i_*^{[\ell]}:=\min\{i\in\{1,\ldots,N\}:H_i^{[\ell]}\le1\}.
	\end{align*}
Here $a\lor b=\max\{a,b\}$.
The proof of the next theorem shows that, under \eqref{markovmonotonicitycondition},
	$G_i^{[\ell]}=x_i(1-H_i^{[\ell]})$ for every $\ell$.

\begin{theorem}[Ano, Kakie and Miyoshi~\cite{anokakiemiyoshi}]
\label{th_markov_multiple}
Assume \eqref{markovparameterassumption} and
	\eqref{markovmonotonicitycondition}.  
Then each recursively defined level-$\ell$ problem is monotone, and
	\begin{align*}
 		1\le i_*^{[m]}\le i_*^{[m-1]}\le\cdots\le i_*^{[1]}\le N.
	\end{align*}
The recursively defined vector
	\begin{align*}
 		\tau_{\mathrm{ola}}^{[m]}
		 &=\min\{i\ge i_*^{[m]}:I_i=1\},\\
 			\tau_{\mathrm{ola}}^{[\ell]}
		 &=\min\left\{i\ge
  			 \max\{s(\tau_{\mathrm{ola}}^{[\ell+1]}),i_*^{[\ell]}\}:
   			I_i=1\right\},
 			\qquad \ell=m-1,\ldots,1,
	\end{align*}
	is optimal.  
An empty minimum is the cemetery time $\partial$.
\end{theorem}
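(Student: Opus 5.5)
The plan is to mirror the proof of Theorem~\ref{th31}, with the Markov odds $r_{ij}$ in place of $r_j$, and to carry out an induction on $\ell$ with three hypotheses: (i) $G_i^{[\ell]}=x_i(1-H_i^{[\ell]})$; (ii) $H_i^{[\ell]}$ is nonnegative and nonincreasing in $i$, so the level-$\ell$ embedded problem is monotone with threshold $i_*^{[\ell]}$; (iii) $H_i^{[\ell]}\le H_i^{[\ell-1]}$, so that $i_*^{[\ell]}\le i_*^{[\ell-1]}$. Level one is Theorem~\ref{th_markov_single} together with \eqref{markovmonotonicitycondition}; nonnegativity of $H^{[1]}$ is clear since $r_{ij}>0$.

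For the inductive step, I apply Corollary~\ref{corollary21} in the form of Remark~\ref{remarkindicator} to the embedded success chain, which is absorbed after at most $N$ transitions. This gives
\[
G_i^{[\ell]}=G_i^{[1]}+\sum_{j=(i+1)\lor i_*^{[\ell-1]}}^N P_{ij}G_j^{[\ell-1]} .
\]
Substituting $G_j^{[\ell-1]}=x_j(1-H_j^{[\ell-1]})$ and using $P_{ij}x_j=x_ir_{ij}$ from \eqref{markovconditionalodds}, then dividing by $x_i>0$, yields exactly \eqref{markovHrecursion}. This establishes (i). Nonnegativity of $H_i^{[\ell]}$ follows as in Theorem~\ref{th31}. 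If $i+1\ge i_*^{[\ell-1]}$, then $H_i^{[\ell]}=\sum_{j>i}r_{ij}H_j^{[\ell-1]}\ge0$. Otherwise the subtracted sum $\sum_{j\ge i_*^{[\ell-1]}}r_{ij}(1-H_j^{[\ell-1]})$ is at most $\sum_{j>i}r_{ij}$, because $0\le H_j^{[\ell-1]}\le1$ on the stopping region. The comparison $H^{[\ell]}\le H^{[\ell-1]}$, and hence the threshold ordering, is obtained exactly as in Corollary~\ref{corollary22}: split the difference into a common range and the extra range $[i_*^{[\ell-1]},i_*^{[\ell-2]})$, where $H^{[\ell-1]}-1\le0$.

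The main obstacle is monotonicity in $i$. The difficulty is that $r_{ij}$ depends on $i$ only through the first term $j=i+1$, since by \eqref{markovoddsexplicit} $r_{ij}=\rho_j:=\beta_{j-1}\alpha_j/((1-\beta_{j-1})(1-\beta_j))$ for $j\ge i+2$. For that reason I would avoid trying to show that $H^{[\ell]}$ is nonincreasing, and prove only the single-crossing property $H_i^{[\ell]}\le1\Rightarrow H_{i+1}^{[\ell]}\le1$, which is all that monotonicity at level $\ell$ requires. I split into two cases. If $i+1\ge i_*^{[\ell-1]}$, then (iii) together with the level-$(\ell-1)$ threshold give $H_{i+1}^{[\ell]}\le H_{i+1}^{[\ell-1]}\le1$. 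If $i+1<i_*^{[\ell-1]}$, the lower summation limit $i_*^{[\ell-1]}$ is the same for $i$ and $i+1$ and exceeds $i+1$, so only the terms $r_{ij}=\rho_j=r_{i+1,j}$ enter the correction. The correction $D:=\sum_{j\ge i_*^{[\ell-1]}}\rho_j(H_j^{[\ell-1]}-1)$ is therefore common to both indices, and
\[
H_i^{[\ell]}-H_{i+1}^{[\ell]}=H_i^{[1]}-H_{i+1}^{[1]}\ge0
\]
by \eqref{markovmonotonicitycondition}. Hence $H_{i+1}^{[\ell]}\le H_i^{[\ell]}\le1$. The only delicate point is keeping track of the boundary term $j=i+1$, which appears only inside $H^{[1]}$ in this case. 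Finally, I define $i_*^{[\ell]}$ as in the statement, note $H_N^{[\ell]}=0$ so that $i_*^{[\ell]}\le N$, and conclude that the problem is monotone at every level. Theorem~\ref{monotonetheorem2}, applied on the embedded chain, then shows that the stated recursive vector, which is $\sigma_i=\max\{s(\sigma_{i+1}),\tau^{[i]}_{\mathrm{ola}}\}$ restricted to success epochs, is optimal.

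Two earlier steps used only the single-crossing property. The nonnegativity step only needs $H_j^{[\ell-1]}\le1$ for $j\ge i_*^{[\ell-1]}$. In the comparison step, $H^{[\ell-1]}-1\le0$ on the extra range comes from the threshold, not from monotonicity of $H^{[\ell-1]}$ in $i$. Both facts follow from the single-crossing form of (ii), so I would state (ii) in that weaker form throughout the induction.
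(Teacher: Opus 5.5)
\textbf{Gap in the case $i+1<i_*^{[\ell-1]}$.} Your overall design is sound, and it differs usefully from the paper's. The paper propagates the full nonincreasing property \eqref{markovmonotonicitycondition} to every $H^{[\ell]}$. You propagate only single crossing, and you handle $i+1\ge i_*^{[\ell-1]}$ through the level comparison $H^{[\ell]}\le H^{[\ell-1]}$. That replaces the paper's case (c), which needs $H^{[\ell-1]}_{i+1}\ge H^{[\ell-1]}_{i+2}$. However, the step claiming that the correction is common to $i$ and $i+1$ fails in one boundary case.

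Both correction sums start at $j=i_*^{[\ell-1]}\ge i+2$, but $r_{i+1,j}=\rho_j$ only for $j\ge i+3$. When $i_*^{[\ell-1]}=i+2$, the term $j=i+2$ for the index $i+1$ is the adjacent-success odds $u_{i+1}:=r_{i+1,i+2}$, not $\rho_{i+2}$. You tracked the boundary term $j=i+1$ for the index $i$, but missed this one on the side of $i+1$. Writing $H_j:=H_j^{[\ell-1]}$ and $u_i:=r_{i,i+1}$, one actually has
\[
H_i^{[\ell]}=u_i+\sum_{j\ge i+2}\rho_jH_j,\qquad H_{i+1}^{[\ell]}=u_{i+1}H_{i+2}+\sum_{j\ge i+3}\rho_jH_j .
\]
Hence $H_i^{[\ell]}-H_{i+1}^{[\ell]}=u_i+(\rho_{i+2}-u_{i+1})H_{i+2}$. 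This differs from $H_i^{[1]}-H_{i+1}^{[1]}=u_i+\rho_{i+2}-u_{i+1}$ by $(\rho_{i+2}-u_{i+1})(1-H_{i+2})$, which can have either sign. So your identity is false there, and \eqref{markovmonotonicitycondition} alone does not give the conclusion as you wrote it.

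\textbf{Repair.} The fix is the paper's case (b). Since $i+2$ is the level-$(\ell-1)$ threshold, $0\le H_{i+2}\le1$. The difference is affine in $H_{i+2}$, so it is bounded below by its endpoint values:
\[
\min\{u_i,\;H_i^{[1]}-H_{i+1}^{[1]}\}\ge0 \quad\text{by \eqref{markovmonotonicitycondition}.}
\]
This uses only nonnegativity and single crossing at level $\ell-1$, so it fits your weakened induction hypothesis. Your case split then becomes (a) $i+2<i_*^{[\ell-1]}$, where your common-correction identity is correct; (b) $i+2=i_*^{[\ell-1]}$, handled as above; and (c) $i+1\ge i_*^{[\ell-1]}$, handled by your level comparison.

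With that addition, the rest goes through: the nonnegativity argument, the comparison $H^{[\ell]}\le H^{[\ell-1]}$ (which needs only single crossing at level $\ell-1$), the threshold ordering, and the appeal to Theorem~\ref{monotonetheorem2}. The trade-off is that you get the conclusion from weaker information at higher levels, while the paper's route yields the extra structural fact that every $H^{[\ell]}$ is nonincreasing.
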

\begin{proof}
We prove by induction on $\ell$ that $G_i^{[\ell]}=x_i(1-H_i^{[\ell]})$ and that
	$H_i^{[\ell]}$ is nonnegative and
	nonincreasing in $i\in\{1,\ldots,N\}$, with $H_N^{[\ell]}=0$.  Since
	$x_i>0$, the level-$\ell$
	one-step look-ahead set $\{i:H_i^{[\ell]}\le1\}$ is then an upper
	interval, and because the embedded success chain moves forward in
	calendar time, the problem is monotone at level $\ell$.
For $\ell=1$, the identity for $G_i^{[1]}$ was noted above, $H_i^{[1]}=\sum_{j>i}r_{ij}\ge0$, and
	\eqref{markovmonotonicitycondition} is exactly the monotonicity in $i$.
Suppose inductively that $G_i^{[\ell-1]}=x_i(1-H_i^{[\ell-1]})$, that the level-$(\ell-1)$ problem is monotone with
	threshold $i_*^{[\ell-1]}$, and that $H^{[\ell-1]}$ is nonnegative
	and nonincreasing.  
Corollary~\ref{corollary21}, in the form of Remark~\ref{remarkindicator} 
	and applied to the embedded success chain, yields
	\begin{align*}
	 	G_i^{[\ell]}=G_i^{[1]}+\sum_{j=(i+1)\lor i_*^{[\ell-1]}}^N P_{ij}G_j^{[\ell-1]}.
	\end{align*}
Dividing by $x_i$ and using
	$P_{ij}x_j/x_i=r_{ij}$ shows that $G_i^{[\ell]}=x_i(1-H_i^{[\ell]})$ with
	$H_i^{[\ell]}$ given by \eqref{markovHrecursion}.  
We now show that \eqref{markovmonotonicitycondition} propagates
	through this recursion.  
By \eqref{markovoddsexplicit}, $r_{ij}$ does not depend on $i$ when $j\ge i+2$; put
	\begin{align*}
		u_i:=r_{i,i+1},\qquad \rho_j:=r_{ij}\quad(j\ge i+2),
	\end{align*}
	so that $H_i^{[1]}=u_i+\sum_{j\ge i+2}\rho_j$ and
	\begin{align*}
		H_i^{[1]}-H_{i+1}^{[1]}=u_i+\rho_{i+2}-u_{i+1}\ge0
	\end{align*}
	by \eqref{markovmonotonicitycondition}, with the conventions
	$H_N^{[\ell]}=0$ and that empty sums vanish.  Write
	$i_*:=i_*^{[\ell-1]}$ and $H_j:=H_j^{[\ell-1]}$, so that $H_j\ge0$ for
	all $j$, $0\le H_j\le1$ for $j\ge i_*$, and $H_j\ge H_{j+1}$.
	\begin{itemize}
	\item[(a)] If $i+2<i_*$, every $j$ in the sums defining $H_i^{[\ell]}$ and
		$H_{i+1}^{[\ell]}$ satisfies $j\ge i+3$, so $r_{ij}=r_{i+1,j}=\rho_j$ and
		$H_i^{[\ell]}-H_{i+1}^{[\ell]}=H_i^{[1]}-H_{i+1}^{[1]}\ge0$.
	\item[(b)] If $i+2=i_*$, then
		$H_i^{[\ell]}=u_i+\sum_{j\ge i+2}\rho_jH_j$ and
		$H_{i+1}^{[\ell]}=u_{i+1}H_{i+2}+\sum_{j\ge i+3}\rho_jH_j$, so that
		\begin{align*}
			H_i^{[\ell]}-H_{i+1}^{[\ell]}=u_i+(\rho_{i+2}-u_{i+1})H_{i+2}
			\ge\min\bigl\{u_i,\ H_i^{[1]}-H_{i+1}^{[1]}\bigr\}\ge0,
		\end{align*}
		because $0\le H_{i+2}\le1$.
	\item[(c)] If $i+1\ge i_*$, then $H_i^{[\ell]}=\sum_{j>i}r_{ij}H_j$.  For
		$i=N-1$ this gives $H_{N-1}^{[\ell]}=u_{N-1}H_N=0=H_N^{[\ell]}$; for
		$i\le N-2$,
		\begin{align*}
			H_i^{[\ell]}-H_{i+1}^{[\ell]}=u_iH_{i+1}+(\rho_{i+2}-u_{i+1})H_{i+2}.
		\end{align*}
		If $\rho_{i+2}\ge u_{i+1}$, this is nonnegative; otherwise
		$H_{i+1}\ge H_{i+2}\ge0$ gives the lower bound
		$(u_i+\rho_{i+2}-u_{i+1})H_{i+2}\ge0$.
	\end{itemize}
Finally, $H_i^{[\ell]}\ge0$: in case (c) it is a sum of nonnegative terms,
	and in cases (a) and (b)
	\begin{align*}
		H_i^{[\ell]}=H_i^{[1]}-\sum_{j\ge i_*}r_{ij}\bigl(1-H_j\bigr)
		\ge\sum_{j>i}r_{ij}-\sum_{j\ge i_*}r_{ij}\ge0,
	\end{align*}
	since $0\le 1-H_j\le1$ for $j\ge i_*\ge i+1$.  This closes the
	induction: every level satisfies \eqref{markovmonotonicitycondition} and
	has an upper stopping interval.
The optimality of the recursively defined vector then follows from
Theorem~\ref{monotonetheorem2}.  
Finally, since the problem is monotone at every level,
 	Corollary~\ref{corollary22} gives $G_i^{[\ell]}\ge G_i^{[\ell-1]}$ for
	every $i$ and $\ell$; hence the stopping regions expand with the number
	of remaining opportunities, and the thresholds are nested as stated.
\end{proof}

The unconditional optimal winning probability from the beginning is
	obtained directly from the dynamic program \eqref{markovmultipleDP},
	without any additional closed-form formula:
	\begin{align*}
 		P_N^{[m]}(\mathrm{win})=W_0^{[m]}=\sum_{j=1}^NP_{0j}V_j^{[m]}.
	\end{align*}
This expression correctly incorporates the initial distribution and all
	successes rejected before the first threshold.  In particular, it should
	not in general be replaced by a formula conditioned on a success at $i_*^{[m]}-1$.

Ano, Kakie and Miyoshi~\cite{anokakiemiyoshi} proposed that analogues of
	the independent-trial asymptotic constants may hold for Markov-dependent trials.  
Their paper also notes that the limiting optimal probabilities
	for multiple selections were not determined.  
We therefore record this only as a conjectural direction from that paper
	and do not identify the limits with the bounds in Theorem~\ref{th32}.

\subsubsection{A Poisson-limit counterpart of Theorem~\ref{th32}}
\label{sec:markov-poisson-limit}
The finite-sample lower bound of Theorem~\ref{th32} relies essentially on
	independence and cannot be transferred verbatim to arbitrary Markov-dependent trials.  
We first make this limitation explicit and then give a rigorous
	asymptotic counterpart under a rare-transition regime in which successes
	become isolated and their logarithmic-time point process converges to the
	rate-one Poisson process underlying Theorem~\ref{th32}.
\begin{proposition}
\label{prop:markov-no-universal-bound}
For every $m\ge1$ and every $\varepsilon\in(0,1)$, there exist $N$ and a
	two-state Markov chain $I_1,\ldots,I_N$ satisfying
	\eqref{markovparameterassumption} such that
	\begin{align*}
		P(I_1=\cdots=I_N=0)\le\varepsilon
		\qquad\text{and}\qquad
		P_N^{[m]}(\mathrm{win})\le(m+2)\varepsilon .
	\end{align*}
Consequently, no positive constant depending only on the number $m$ of selection
	opportunities is a lower bound for the optimal last-success probability
	throughout the class of two-state Markov chains satisfying
	\eqref{markovparameterassumption}, even if the probability of no success
	is required to be arbitrarily small.
In particular, the lower bound in Theorem~\ref{th32}, whose hypothesis
	$\prod_{i=1}^Nq_i<e^{-C_m}$ is a bound on the probability of no success,
	does not remain valid 
	without an additional condition that suppresses persistent runs of successes.
\end{proposition}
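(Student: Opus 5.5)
The plan is to build an explicit two-state chain in which successes come in one long persistent run. Inside such a run, every success is the last one only with a small, memoryless probability, so no selection rule can identify the end of the run. Fix $m\ge1$ and $\varepsilon\in(0,1)$. Take constant parameters
\begin{align*}
\alpha_0=\varepsilon,\qquad \alpha_i=\varepsilon\ (1\le i\le N-1),\qquad \beta_i=\varepsilon/N\ (1\le i\le N-1),
\end{align*}
with $N$ chosen so large that $(1-\varepsilon)^{N-1}\le\varepsilon$. These values satisfy \eqref{markovparameterassumption}. Under this law the trial sequence starts with a success with probability $1-\varepsilon$. A run of successes then continues with probability $1-\varepsilon$ at each step. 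Once the run breaks, a new success can occur only with probability at most $\sum_i\beta_i\le\varepsilon$.

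The first claim is immediate: $P(I_1=\cdots=I_N=0)\le P(I_1=0)=\alpha_0=\varepsilon$. For the winning probability, I would bound an arbitrary admissible rule with selection times $\tau^{[m]}\prec_\partial\cdots\prec_\partial\tau^{[1]}$, each at a success epoch. A union bound gives $P(\mathrm{win})\le\sum_{k=1}^mP(\tau^{[k]}\ \text{is the last success})$. I would split each term according to whether $\tau^{[k]}<N$ or $\tau^{[k]}=N$. On $\{\tau^{[k]}<N\}$, the selection is the last success only if $I_{\tau^{[k]}+1}=0$. Since $\{\tau^{[k]}=i\}\in\F_i$ and $I_i=1$ there, the Markov property gives
\begin{align*}
P(\tau^{[k]}<N,\ I_{\tau^{[k]}+1}=0)=\sum_{i<N}P(\tau^{[k]}=i)\,\alpha_i\le\varepsilon .
\end{align*}
Summing over $k$ contributes at most $m\varepsilon$.

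The terms with $\tau^{[k]}=N$ need care, because selecting at the final epoch wins whenever $I_N=1$. Since the $\tau^{[k]}$ are distinct, at most one of them equals $N$. Hence these terms together contribute at most $P(I_N=1)$. The event $\{I_N=1\}$ requires one of two things. Either the initial run never breaks before time $N$, which has probability at most $(1-\varepsilon)^{N-1}\le\varepsilon$. Or some failure is followed by a success, which has probability at most $\sum_{i=1}^{N-1}\beta_i\le\varepsilon$. Thus $P(I_N=1)\le2\varepsilon$, and altogether $P_N^{[m]}(\mathrm{win})\le(m+2)\varepsilon$. Maximizing over rules, deterministic ones suffice on this finite horizon, gives the stated bound.

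The main point to check carefully is the conditioning step. It must be uniform over all stopping rules: the factor $\alpha_i$ is exactly the conditional probability of a failure after a selected success, irrespective of the past. The boundary epoch $N$ is the only place where this argument fails, and that is why it is isolated and handled by the separate estimate on $P(I_N=1)$. The consequences stated in the proposition follow directly. Letting $\varepsilon\downarrow0$ shows that no positive constant depending only on $m$ can be a lower bound. Moreover, the hypothesis $\prod_iq_i<e^{-C_m}$ of Theorem~\ref{th32}, read as a bound on the probability of no success, is satisfied once $\varepsilon<e^{-C_m}$, while the conclusion \eqref{boundsofwin} fails as soon as $(m+2)\varepsilon<\sum_{k\le m}e^{-C_k}$.
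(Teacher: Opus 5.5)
Your proof is correct and is essentially the paper's argument. Both use a chain with $\alpha_i=\varepsilon$, so that each selected success at a time $i<N$ wins with conditional probability $\varepsilon$. Both then apply a union bound over the at most $m$ selections to get $m\varepsilon$, and handle the terminal epoch separately via $P(I_N=1)\le 2\varepsilon$.

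The differences are cosmetic. The paper takes $\alpha_0=\beta_i=\varepsilon^2$ and bounds $P(I_N=1)$ through the forward recursion $P(I_{i+1}=1)\le(1-\varepsilon)P(I_i=1)+\varepsilon^2$. You instead take $\beta_i=\varepsilon/N$ and use a pathwise split into ``the initial run never breaks'' or ``some $0\to1$ transition occurs''.
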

\begin{proof}
Fix $m\ge1$ and $\varepsilon\in(0,1)$, choose $N$ so large that
	$(1-\varepsilon)^{N-1}\le\varepsilon$, and put $\eta:=\varepsilon^2$.  
Consider the chain with
	\begin{align*}
		\alpha_0=\eta,\qquad \alpha_i=\varepsilon,\qquad \beta_i=\eta,
		\qquad 1\le i\le N-1,
	\end{align*}
	which satisfies \eqref{markovparameterassumption}.  
Then $P(I_1=\cdots=I_N=0)=\eta(1-\eta)^{N-1}\le\varepsilon$.
A selection at a failure never wins.
A success selected at a time $i<N$ can be the last success only if
	$I_{i+1}=0$, which has conditional probability $\alpha_i=\varepsilon$ given $\mathcal F_i$
	on $\{I_i=1\}$.  
A selection at time $N$ can win only on $\{I_N=1\}$.
Since $P(I_{i+1}=1)\le(1-\varepsilon)P(I_i=1)+\eta$ for $1\le i<N$, iteration gives
	\begin{align*}
		P(I_N=1)\le(1-\varepsilon)^{N-1}+\frac{\eta}{\varepsilon}\le2\varepsilon .
	\end{align*}
By optional sampling and a union bound over the at most $m$ selections,
	\begin{align*}
		 P_N^{[m]}(\mathrm{win})\le m\varepsilon+P(I_N=1)
		 \le(m+2)\varepsilon .
	\end{align*}
Since $\varepsilon$ is arbitrary, the remaining assertions follow.
\end{proof}

For a triangular array, write $I_{i,N}$ for the state at time $i$ and use the
	same transition convention as above:
	\begin{align*}
 		\alpha_{i,N}&:=P(I_{i+1,N}=0\mid I_{i,N}=1),\\
 		\beta_{i,N}&:=P(I_{i+1,N}=1\mid I_{i,N}=0).
	\end{align*}
Let $P_{N,\mathrm M}^{[m]}(\mathrm{win})$ denote the optimal probability of
	selecting the last success with at most $m$ selections.  
For each $N$, assume
	that the single-crossing condition \eqref{markovmonotonicitycondition} holds, so
	that the finite optimal rules are the nested threshold rules of 
	Theorem~\ref{th_markov_multiple}.
\begin{theorem}
\label{th:markov-poisson-odds}
Suppose that the triangular array of nonhomogeneous Markov chains satisfies
	the following conditions.
	\begin{align}
 		&\max_{\delta N\le i<N}\beta_{i,N}\longrightarrow0
 			\quad\text{for every }\delta\in(0,1),\tag{MP1}\label{MP1}\\
 		&\sum_{i=\lfloor Na\rfloor}^{\lfloor Nb\rfloor-1}\beta_{i,N}
 			\longrightarrow \log\frac ba  \quad\text{for every }0<a<b\le1,
	 		\tag{MP2}\label{MP2}\\
	 	&\max_{\delta N\le i<N}(1-\alpha_{i,N})\longrightarrow0
	 		\quad\text{for every }\delta\in(0,1).
		 	\tag{MP3}\label{MP3}
	\end{align}
Assume, in addition, that the convergence in \eqref{MP2} is
	uniform on compact subsets of $\{(a,b):0<a\le b\le1\}$ and that the
	finite single-crossing condition
	\eqref{markovmonotonicitycondition} holds for every $N$.

Let $\lambda_1,\lambda_2,\ldots$ be the unique positive solution of the
	equality system \eqref{equalitysystem} in Theorem~\ref{th32}, and put
	$C_k:=\sum_{j=1}^k\lambda_j$, $C_0:=0$; as in Lemma~\ref{lemma:poisson-boundaries},
	$p^{[m]}$ denotes the limiting Poisson winning probability.  
Then, for every fixed $m$,
	\begin{align}
		 P_{N,\mathrm M}^{[m]}(\mathrm{win})&\longrightarrow p^{[m]}=\sum_{k=1}^m e^{-C_k},
		 \label{markov-poisson-value-limit}\\
		 \frac{i_{*,N}^{[k]}}{N}&\longrightarrow e^{-C_k}, \qquad k=1,\ldots,m.
	 	\label{markov-poisson-threshold-limit}
	\end{align}
In particular,
	\begin{align*}
 		C_1&=1,&p^{[1]}&=e^{-1},\\
 		C_2&=\frac32,&p^{[2]}&=e^{-1}+e^{-3/2},\\
 		C_3&=\frac{47}{24},&p^{[3]}&=e^{-1}+e^{-3/2}+e^{-47/24}.
	\end{align*}
\end{theorem}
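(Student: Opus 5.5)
The plan follows Step~5 of the proof of Theorem~\ref{th:uniform-random-horizon-odds}: a scaling limit of the finite dynamic program, with limiting objects taken from Lemmas~\ref{lemma:poisson-last-arrival}--\ref{lemma:poisson-equality-system} rather than from Theorem~\ref{th32}. Work in the logarithmic time $t=-\log(i/N)$ with $i_N(t)=\lfloor Ne^{-t}\rfloor$.

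\emph{Step 1: point-process limit.} From \eqref{MP1}--\eqref{MP3}, show that on $\{i\ge\delta N\}$ the successes in logarithmic time converge to a rate-one Poisson process on $[0,-\log\delta]$. After a success, the chain leaves state $1$ within $O(1)$ steps, by \eqref{MP3}. Entries into state $1$ occur at rates $\beta_{i,N}\to0$ uniformly, by \eqref{MP1}, with integrated intensity $\log(b/a)$, by \eqref{MP2}. Hence the clusters have length $1+o_P(1)$ in index and are asymptotically isolated Poisson points.

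\emph{Step 2: kernel and reward limits.} Obtain the analogues of \eqref{uniform_odds_reward_uniform}--\eqref{uniform_odds_kernel_uniform}. The key pieces are:
\begin{itemize}
\item $x_{i_N(t)}=\alpha_i\prod_{k>i}(1-\beta_k)\to e^{-t}$ uniformly on $[0,T]$;
\item $\sum_j P_{i_N(t),j}f(-\log(j/N))\to\int_0^t e^{-(t-s)}f(s)\,ds$ for continuous $f$, since the term $P_{i,i+1}=1-\alpha_i\to0$ and $P_{ij}\approx\beta_{j-1}\prod(1-\beta_k)$ is a Riemann sum;
\item the conditional odds converge, $H^{[1]}_{i_N(t)}\to t$ uniformly.
\end{itemize}
The uniformity of \eqref{MP2} on compacts is what makes these limits uniform.

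\emph{Step 3: value convergence by contraction.} Show by induction on $k$ that $V^{[k]}_{i_N(t)}\to v^{[k]}(t)$ uniformly on $[0,T]$, where $v^{[k]}$ is the Poisson value of Lemma~\ref{lemma:poisson-last-arrival}. The limiting Bellman equation $v^{[k]}(t)=\max\{e^{-t}+\mathcal Kv^{[k-1]},\mathcal K v^{[k]}\}$ is exactly the first-arrival form \eqref{poisson-first-arrival} evaluated at an arrival. Use the error inequality of \eqref{uniform_odds_error_inequality} type with the kernel mass $1-x_i\le1-c_T$, where $c_T=\tfrac12 e^{-T}$. This gives $E_N^{[k]}(T)\le c_T^{-1}\varepsilon_{N,k}(T)\to0$.

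\emph{Step 4: thresholds and value.} The finite rules are nested thresholds by Theorem~\ref{th_markov_multiple}. Use the representation $G^{[k]}_i=x_i(1-H^{[k]}_i)$ and recursion \eqref{markovHrecursion}. Carry out the Riemann-sum induction of \eqref{H-to-delta}: since $r_{ij}\approx\beta_{j-1}$ and the $j=i+1$ term is negligible, $H^{[k]}_{i_N(t)}\to\widetilde\Delta^{[k]}(t)$ locally uniformly. This function is strictly increasing, with its unique crossing of $1$ at $C_k$ by Lemma~\ref{lemma:poisson-boundaries}. Convergence of crossing points then gives \eqref{markov-poisson-threshold-limit}. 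For \eqref{markov-poisson-value-limit}, fix $T>C_m$. Then $P_{N,\mathrm M}^{[m]}=W_0^{[m]}$ is a sum over the first success after $\lfloor Ne^{-T}\rfloor$ (no selection occurs before it for large $N$), weighted by $V^{[m]}$. In the limit this equals $\int_0^T e^{-(T-t)}v^{[m]}(t)\,dt=v^{[m]}(T)=p^{[m]}$, since $T$ lies on the plateau.

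\emph{Main obstacle.} The hardest step is controlling the early region $i<\delta N$ and the deterministic-looking initial segment, where \eqref{MP1} and \eqref{MP3} give no control. I would handle it by truncation: the optimal rule never selects there, and the law of the first success after $\delta N$ is determined by the chain at time $\delta N$. The mixing from \eqref{MP3} makes the state at $\delta N$ irrelevant up to $o(1)$, provided a cluster straddling $\delta N$ occurs with vanishing probability. Uniformity of the $O(1)$ cluster-length bound near $t\to0$ (the end of the horizon) also needs care; there I would use $x_i\to1$ as in the uniform-horizon proof.
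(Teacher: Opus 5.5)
Your overall architecture is the paper's. It consists of uniform reward and kernel limits on $[0,T]$, a contraction with kernel mass at most $1-c_T$, no selection before $j_N=\lfloor Ne^{-T}\rfloor$ for $T>C_m$, and a first-success decomposition from $j_N$.

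\textbf{The error in Step~3.} Step~3 identifies the wrong limit. $V^{[k]}_{i,N}=\max\{x_{i,N}+W^{[k-1]}_{i,N},\,W^{[k]}_{i,N}\}$ is the value \emph{at} a current success. By contrast, $v^{[k]}$ of Lemma~\ref{lemma:poisson-last-arrival} is the value \emph{before} the next arrival. In the uniform-horizon proof you are imitating, $\widehat v^{[k]}$ was by definition a value at a record point, which is why $V$ was the right object there.

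Concretely, already $V^{[1]}_{N,N}=1$ while $v^{[1]}(0)=0$, and on $[0,1]$ the finite values tend to $e^{-t}$ rather than $te^{-t}$. Moreover, $v^{[k]}$ does not solve your limiting equation: for $k=1$ its right-hand side is at least $e^{-t}>te^{-t}$ on $(0,1)$. So the contraction has no valid comparison function as written.

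The error propagates to Step~4. Your closing identity $\int_0^Te^{-(T-t)}v^{[m]}(t)\,dt=v^{[m]}(T)$ is false for the Poisson value. Since $0\le v^{[m]}\le p^{[m]}$ with $v^{[m]}(0)=0$, the left side is strictly smaller than $p^{[m]}(1-e^{-T})$.

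\textbf{The repair.} Compare $V^{[k]}$ with $F_k(t):=\max\{e^{-t}+v^{[k-1]}(t),\,v^{[k]}(t)\}$.
\begin{itemize}
\item By \eqref{poisson-first-arrival}, $v^{[k]}=\mathcal KF_k$ with $(\mathcal Kf)(t)=\int_0^te^{-(t-s)}f(s)\,ds$.
\item Hence $F_k=\max\{e^{-t}+\mathcal KF_{k-1},\,\mathcal KF_k\}$ (with $F_0\equiv0$), which is exactly your equation.
\item $F_k$ is continuous with $F_k(0)=1$, and your contraction then yields $V^{[k]}_{i_N(t),N}\to F_k(t)$ uniformly on $[0,T]$.
\item In Step~4 the limit becomes $\int_0^Te^{-(T-t)}F_m(t)\,dt=v^{[m]}(T)=p^{[m]}$.
\end{itemize}
The paper sidesteps this by running the contraction on the waiting values $W^{[k]}_{i,N}$ through \eqref{markov_waiting_recursion}, which converge to $v^{[k]}$ itself.

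\textbf{Thresholds: a different route.} The paper reads the thresholds off the value limits. $S^{[k]}_{i,N}-W^{[k]}_{i,N}$ converges uniformly to $e^{-t}+v^{[k-1]}(t)-v^{[k]}(t)=e^{-t}\{1-\Delta^{[k]}(t)\}$, which changes sign strictly at $C_k$. Condition (M) is used only to know that the finite stopping set is $\{i\ge i^{[k]}_{*,N}\}$.

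You instead pass to the limit in \eqref{markovHrecursion} as in \eqref{H-to-delta}. That works, for these reasons:
\begin{itemize}
\item The $j=i+1$ term is $O(1-\alpha_{i,N})$.
\item $r_{ij}=\beta_{j-1,N}(1+o(1))$ for $j\ge i+2$, uniformly on $[i_N(T),N]$, by \eqref{MP1} and \eqref{MP3}.
\item The moving lower limit $i^{[k-1]}_{*,N}$ is handled by the inductive threshold limit together with the uniform form of \eqref{MP2}.
\item Monotonicity of $H^{[k]}_i$ in $i$ is established in the proof of Theorem~\ref{th_markov_multiple}.
\item $\widetilde\Delta^{[k]}$ is strictly increasing with $\widetilde\Delta^{[k]}(C_k)=1$.
\end{itemize}
Your route decouples thresholds from values and makes their link to the $\Delta$-recursion behind \eqref{equalitysystem} explicit, at the cost of a second Riemann-sum induction. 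The paper's route gets the thresholds for free once value convergence is known. Your Step~1, like Lemma~\ref{lem:markov-point-process}, is not needed for either limit.
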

Conditions \eqref{MP1} and \eqref{MP3} are imposed only on proportional
	windows $\{\delta N,\ldots,N\}$, and no condition on the initial law is
	needed (in particular, $\alpha_{0,N}=0$ is allowed): since no selection is made before
	$j_N=\lfloor Ne^{-T}\rfloor$ and $j_N\to\infty$,
	Lemma~\ref{lem:markov-success-occupancy} gives $P(I_{j_N,N}=1)\to0$
	irrespective of $P(I_{1,N}=1)$.  
Independent trials are the special case $\alpha_i=1-p_{i+1}$,
	$\beta_i=p_{i+1}$; in particular, the classical secretary problem
	$p_i=1/i$ satisfies \eqref{MP1}, \eqref{MP2} and \eqref{MP3},
	although $\max_{1\le i<N}\beta_{i,N}=1/2$ does not tend to zero.

The proof is divided into lemmas so that the roles of rare entries, vanishing
	clusters, point-process convergence and dynamic programming are explicit.
\begin{lemma}
\label{lem:markov-success-occupancy}
Under \eqref{MP1} and \eqref{MP3}, for every $\delta\in(0,1)$,
		\begin{align*}
		\max_{\delta N\le i\le N}P(I_{i,N}=1)\longrightarrow0.
	\end{align*}
Moreover, for every compact interval $[a,b]\subset(0,1]$, the probability
	that two consecutive successes occur at indices in
	$\{\lfloor Na\rfloor,\ldots,\lfloor Nb\rfloor\}$ tends to zero.
\end{lemma}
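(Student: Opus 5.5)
Write $\pi_i:=P(I_{i,N}=1)$. The plan is a one-step recursion for $\pi_i$ driven by the two-state transition law, combined with \eqref{MP1} and \eqref{MP3} on the proportional window. The recursion is
\begin{align*}
\pi_{i+1}=(1-\alpha_{i,N})\pi_i+\beta_{i,N}(1-\pi_i)\le \eta_N\,\pi_i+\varepsilon_N ,
\end{align*}
where, for a fixed $\delta'\in(0,\delta)$,
\begin{align*}
\eta_N:=\max_{\delta' N\le i<N}(1-\alpha_{i,N}),\qquad
\varepsilon_N:=\max_{\delta' N\le i<N}\beta_{i,N}.
\end{align*}
Both quantities tend to zero by \eqref{MP3} and \eqref{MP1}.

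Starting from the trivial bound $\pi_{\lceil\delta' N\rceil}\le1$, iterate the inequality. After a single step, every index $i$ with $\delta'N<i\le N$ satisfies $\pi_i\le\eta_N+\varepsilon_N$. For $N$ large, every $i\ge\delta N$ lies at least one step beyond $\lceil\delta' N\rceil$, so $\max_{\delta N\le i\le N}\pi_i\le\eta_N+\varepsilon_N\to0$. No geometric decay is needed, because one step already kills the initial mass. This is also why no condition on the initial law is required.

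For the second claim, fix $[a,b]\subset(0,1]$ and choose $\delta<a$. A union bound and the Markov property give
\begin{align*}
P(\exists\, i\in[\lfloor Na\rfloor,\lfloor Nb\rfloor):I_{i,N}=I_{i+1,N}=1)
\le\sum_{i}\pi_i(1-\alpha_{i,N}).
\end{align*}
Bounding this crudely by $N\max\pi_i\cdot\eta_N$ would require rates, which we do not have. Instead, I will write $\pi_i\le(1-\alpha_{i-1,N})\pi_{i-1}+\beta_{i-1,N}$, which splits the sum into two pieces:
\begin{align*}
\eta_N\sum_i\beta_{i-1,N}+\eta_N\sum_i(1-\alpha_{i-1,N})\pi_{i-1}.
\end{align*}
By \eqref{MP2}, $\sum\beta_{i-1,N}$ over the window is bounded, tending to $\log(b/a)$ up to an $O(1)$ index shift. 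So the first piece is $O(\eta_N)\to0$.

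The second piece is $\eta_N$ times the same kind of sum, shifted by one index. Let $S$ be the clustered sum $\sum_i\pi_i(1-\alpha_{i,N})$ taken over a slightly enlarged window starting at $\lfloor N\delta\rfloor$. The bound then reads $S\le\eta_N\bigl(\sum\beta+S\bigr)+O(\eta_N)$, where the $O(\eta_N)$ term accounts for the boundary term at the start of the window (using $\pi\le1$). Hence $S(1-\eta_N)\le\eta_N(\log(b/\delta)+o(1)+1)$, so $S\to0$. The boundary terms are harmless because $\pi$ at the left endpoint is at most $1$ and is multiplied by $\eta_N$.

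The main obstacle is this self-referential absorption step. It needs the sum of $\beta_{i,N}$ over the enlarged window to stay bounded, which is exactly what \eqref{MP2} supplies. I would use the uniform version of \eqref{MP2} on the interval $[\delta,1]$.
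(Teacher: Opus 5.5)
Your proposal is correct and follows essentially the paper's route: it uses the same forward recursion $\pi_{i+1}\le\eta_N\pi_i+\beta_{i,N}$ for the first claim, and you rightly note that a single step already suffices where the paper iterates to a geometric bound. For the second claim it uses the same absorption of a $(1-\eta_N)$ factor, with a boundary term of at most one, to keep the occupancy sum over the window of order $1+\sum\beta_{i,N}$; as in the paper, this step genuinely needs \eqref{MP2} (its pointwise form suffices), which is not among the lemma's stated hypotheses but is required for the second assertion.
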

\begin{proof}
Fix $\delta\in(0,1)$, put $i_0:=\lfloor\delta N/2\rfloor$,
	$\pi_{i,N}:=P(I_{i,N}=1)$, and
	$\eta_N:=\max_{i_0\le i<N}(1-\alpha_{i,N})$, $\bar\beta_N:=\max_{i_0\le i<N}\beta_{i,N}$, 
	so that $\eta_N\to0$ and $\bar\beta_N\to0$ by \eqref{MP1} and
	\eqref{MP3}, applied with $\delta/4$ in place of $\delta$.  
The forward recursion gives, for $i_0\le i<N$,
	\begin{align}\label{markov_forward_inequality}
		 \pi_{i+1,N} =\pi_{i,N}(1-\alpha_{i,N})+(1-\pi_{i,N})\beta_{i,N}
		 \le \eta_N\pi_{i,N}+\beta_{i,N}\le \eta_N\pi_{i,N}+\bar\beta_N.
	\end{align}
For all sufficiently large $N$, $\eta_N<1/2$, and iteration from $i_0$ yields
	\begin{align*}
		 \pi_{i,N}\le \eta_N^{\,i-i_0}+\frac{\bar\beta_N}{1-\eta_N},\qquad i_0\le i\le N.
	\end{align*}
For $i\ge\delta N$ we have $i-i_0\ge\delta N/2\to\infty$, so the right-hand
	side tends to zero uniformly in $\delta N\le i\le N$, proving the first assertion.

For the second assertion, a union bound gives
	\begin{align*}
		 P\bigl(I_{i,N}=I_{i+1,N}=1	\text{ for some }\lfloor Na\rfloor\le i<\lfloor Nb\rfloor\bigr)
		 \le \sum_{i=\lfloor Na\rfloor}^{\lfloor Nb\rfloor-1}P(I_{i,N}=1)(1-\alpha_{i,N}).
	\end{align*}
Apply the preceding bounds with $\delta=a$.  
Summing the first inequality in \eqref{markov_forward_inequality} over the window gives
	$(1-\eta_N)\sum_{i=\lfloor Na\rfloor}^{\lfloor Nb\rfloor-1}\pi_{i,N}
	\le1+\sum_{i=\lfloor Na\rfloor}^{\lfloor Nb\rfloor-1}\beta_{i,N}$,
	which is bounded by \eqref{MP2}.  
Hence the displayed sum is at most a constant multiple of $\eta_N$, which tends to zero.
\end{proof}

Define the indicator of a new success run by
	$J_{i,N}:=\mathbf 1_{\{I_{i-1,N}=0,\ I_{i,N}=1\}}$, $i=2,\ldots,N$,
	and define the logarithmic-time point process
	\begin{align*}
		 \mathcal N_N:=\sum_{i=2}^N J_{i,N}\,\delta_{-\log(i/N)}.
	\end{align*}
\begin{lemma}
\label{lem:markov-point-process}
Under \eqref{MP1}, \eqref{MP2} and \eqref{MP3},
	\begin{align*}
 		\mathcal N_N\Longrightarrow\mathcal N,
	\end{align*}
	where $\mathcal N$ is a rate-one Poisson point process on $(0,\infty)$.
The same limit holds when $J_{i,N}$ is replaced by the success indicator
	$I_{i,N}$.
\end{lemma}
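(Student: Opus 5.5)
The approach is to prove point-process convergence on $(0,\infty)$ by showing that, for every finite union of disjoint bounded intervals $(a_r,b_r]\subset(0,\infty)$, the counts $\mathcal N_N(a_r,b_r]$ converge jointly to independent Poisson variables with means $b_r-a_r$. Because the limit is simple, Kallenberg's criterion lets us check something weaker: $P(\mathcal N_N(B)=0)\to e^{-|B|}$ for such finite unions $B$, together with $\limsup E[\mathcal N_N(B)]\le|B|$. In calendar time, a logarithmic interval $(a,b]$ is the index window $\{\lceil Ne^{-b}\rceil,\ldots,\lfloor Ne^{-a}\rfloor\}$. This window lies inside $\{\delta N,\ldots,N\}$ with $\delta=e^{-b}$, so only the localized hypotheses \eqref{MP1}--\eqref{MP3} on that window are used.

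\emph{First moment.} We have $E[J_{i,N}]=(1-\pi_{i-1,N})\beta_{i-1,N}$, where $\pi_{i,N}=P(I_{i,N}=1)$. Lemma~\ref{lem:markov-success-occupancy} gives $\max\pi_{i,N}\to0$ on the window. Hence \eqref{MP2}, applied with the uniformity on compacts, yields $E[\mathcal N_N(a,b]]\to\log(e^{-a}/e^{-b})=b-a$. The off-by-one shifts in the endpoints are absorbed by \eqref{MP1}.

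\emph{Void probabilities.} I will compute $P(\text{no new run starts in the window})$ directly from the chain. On that event, the chain at the window entrance is either $0$ or $1$. If it is $1$, the initial run must end; after that the chain stays at $0$ throughout. Let $L$ be the left end of the window. Conditioning on $I_{L,N}$ gives
\begin{align*}
P(\mathcal N_N(B)=0)=(1-\pi_{L,N})\prod_{i\in W}(1-\beta_{i,N})+\pi_{L,N}\cdot(\text{bounded}),
\end{align*}
up to boundary corrections. The first term tends to $\exp\{-\sum\beta_{i,N}\}\to e^{-(b-a)}$, because $\max\beta\to0$ gives $\log(1-\beta)=-\beta(1+o(1))$. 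The second term vanishes by Lemma~\ref{lem:markov-success-occupancy}. For a finite union of intervals, I condition successively at each interval's left endpoint using the Markov property. Each entrance state is $0$ with probability $1-o(1)$, again uniformly by the occupancy lemma, so the void probability factorizes into the product of the individual limits. An alternative that sidesteps Kallenberg is to use factorial moments or a Chen--Stein coupling; I would keep the void-probability route.

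\emph{Replacing $J$ by $I$.} We have $\sum_{i\in W}I_{i,N}-\sum_{i\in W}J_{i,N}=\#\{i: I_{i-1,N}=I_{i,N}=1\}$, up to one boundary term $I_{L,N}$. The expectation of this difference is bounded by $\sum_W\pi_{i,N}(1-\alpha_{i,N})+\pi_{L,N}$. This tends to zero by the second assertion of Lemma~\ref{lem:markov-success-occupancy} and its proof. The two point processes therefore differ on each compact by a count converging to $0$ in probability, so they share the limit.

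The main obstacle is the void-probability step for unions of intervals. The issue is controlling the state of the chain at each entrance uniformly and handling the bounded contribution of an initial run, for which \eqref{MP3} is what makes the runs short. The rest is Riemann-sum bookkeeping with \eqref{MP2}.
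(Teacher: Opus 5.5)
Your argument is correct, but it takes a different route from the paper.

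\textbf{The paper's route.} It uses the method of moments. After the same first-moment computation, it shows that for pairwise nonadjacent indices $i_1<\cdots<i_k$ in a compact proportional window, $P(J_{i_1,N}=\cdots=J_{i_k,N}=1)=\prod_r\beta_{i_r-1,N}\{1+o(1)\}$ uniformly. It notes that tuples containing adjacent indices contribute zero. From this it obtains convergence of all joint factorial moments of the counts on disjoint intervals to those of independent Poisson variables, and then adds tightness from the locally bounded mean measures. The passage from $J$ to $I$ uses Lemma~\ref{lem:markov-success-occupancy}, as in your proposal.

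\textbf{Your route.} You invoke Kallenberg's criterion for a simple limit, which needs only the first moments and the void probabilities. You compute the void probabilities essentially exactly from the run structure. Having no new run in a window forces the path $I_{L-1,N},\ldots,I_{R,N}$ to have the form $1\cdots1\,0\cdots0$. Conditioning on the entrance state therefore gives $\prod(1-\beta_{i,N})$ plus an error of order $P(I_{L-1,N}=1)$. The Markov property then makes the factorization over several windows immediate, even for adjacent windows.

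\textbf{Comparison.} Your route avoids the uniform $k$-fold product asymptotics, which the paper has to justify for every $k$ and only sketches, as well as the bookkeeping for adjacent tuples. The paper's route relies only on elementary moment identities and the moment-determinacy of the Poisson law. It delivers convergence of the finite-dimensional count distributions directly, without appealing to a theorem that depends on simplicity of the limit.

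\textbf{A small correction.} Since $J_{i,N}$ already involves $I_{i-1,N}$, the identity $I_{i,N}-J_{i,N}=\mathbf 1_{\{I_{i-1,N}=I_{i,N}=1\}}$ holds exactly for every $i\ge2$. So the boundary term $I_{L,N}$ you add in the last step is unnecessary. The second assertion of Lemma~\ref{lem:markov-success-occupancy} already gives that the two counts on each compact window coincide with probability tending to one.
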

\begin{proof}
Fix disjoint bounded logarithmic intervals
	$B_r=(s_r,t_r]$, $r=1,\ldots,q$.  
The corresponding index sets are
	$A_{r,N}:=\{i:Ne^{-t_r}<i\le Ne^{-s_r}\}$. 
Since
	$E[J_{i,N}] =\beta_{i-1,N}P(I_{i-1,N}=0)$, 
Lemma~\ref{lem:markov-success-occupancy} and \eqref{MP2} imply
	\begin{align*}
		\sum_{i\in A_{r,N}}E[J_{i,N}] \longrightarrow t_r-s_r.
	\end{align*}

We verify convergence of joint factorial moments.  
For ordered, pairwise nonadjacent indices $i_1<\cdots<i_k$ in a fixed compact proportional range,
	the Markov property and Lemma~\ref{lem:markov-success-occupancy} give,
	uniformly in the indices,
	\begin{align*}
 		P(J_{i_1,N}=\cdots=J_{i_k,N}=1)=\prod_{r=1}^k\beta_{i_r-1,N}\{1+o(1)\}.
	\end{align*}
Indeed, after each entry into state $1$, condition \eqref{MP3} returns the
	chain to state $0$ at the next step with probability $1-o(1)$, while before
	each later selected index the probability of being in state $1$ is $o(1)$.
Tuples containing adjacent indices contribute zero, and their product-sum
	contribution is bounded by
	$k\max_i\beta_{i,N}$ times a bounded power of the total intensity; it
	therefore vanishes by \eqref{MP1}.  
Consequently, for nonnegative integers $k_1,\ldots,k_q$,
	\begin{align*}
		 E\prod_{r=1}^q\bigl(\mathcal N_N(B_r)\bigr)_{k_r}
 		\longrightarrow \prod_{r=1}^q(t_r-s_r)^{k_r},
	\end{align*}
	where $(x)_k=x(x-1)\cdots(x-k+1)$.  
These are the joint factorial moments of
	independent Poisson random variables with means $t_r-s_r$.  
Tightness follows from the local boundedness of the mean measures, and hence
	$\mathcal N_N\Rightarrow\mathcal N$.

Finally, Lemma~\ref{lem:markov-success-occupancy} shows that, on every compact
	logarithmic interval, the probability of a success run of length greater than
	one tends to zero.  
Thus the point processes of run beginnings and of all
	successes differ with probability tending to zero on compact sets, proving
	the final assertion.
\end{proof}

We now recall the continuous limiting problem.  
Let $v^{[k]}(t)$ be the optimal probability of selecting the last point of a rate-one Poisson process 
	on an interval of remaining length $t$, with at most $k$ selections, and set
	$v^{[0]}\equiv0$.  
The first-point decomposition gives
	\begin{align*}
 		v^{[k]\prime}(t)=\bigl[e^{-t}+v^{[k-1]}(t)-v^{[k]}(t)\bigr]^+,\qquad v^{[k]}(0)=0.
	\end{align*}
As proved in Section~\ref{sec:poisson-proof}, this equation has a unique
	free boundary $C_k$, the boundaries satisfy $0<C_1<\cdots<C_m$, and
	\begin{align}
 		v^{[m]}(t)=p^{[m]}=\sum_{k=1}^m e^{-C_k}, \qquad t\ge C_m.
		\label{markov-poisson-flat-value}
	\end{align}
\begin{lemma}
\label{lem:markov-conditional-kernel}
For $t\ge0$, let $i_N(t)=\lfloor Ne^{-t}\rfloor$.  Under
\eqref{MP1}, \eqref{MP3} and the locally uniform version of
\eqref{MP2}, for every fixed $T<\infty$,
	\begin{align}
		 \sup_{0\le t\le T}\left|x_{i_N(t),N}-e^{-t}\right|&\longrightarrow0,
 		\label{markov_defect_uniform}\\
 		\sup_{0\le t\le T}\left|\sum_{j=i_N(t)+1}^N P_{i_N(t),j}^{(N)}
		 f\left(-\log\frac jN\right) -\int_0^t e^{-(t-s)}f(s)\,ds \right|&\longrightarrow0
 		\label{markov_kernel_uniform}
	\end{align}
	for every continuous $f$ on $[0,T]$.  
Here $x_{i,N}$ is the probability of no later success conditional on $I_{i,N}=1$, 
	and $P_{ij}^{(N)}$ is the conditional transition probability 
	from the current success at $i$ to the next success at $j$.
\end{lemma}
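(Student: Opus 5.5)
The plan is to work from the explicit formulas for $x_{i,N}$ and $P^{(N)}_{ij}$ in the Markov model and reduce both limits to the locally uniform version of \eqref{MP2}, together with \eqref{MP1} and \eqref{MP3}. For \eqref{markov_defect_uniform}, I will write
\begin{align*}
 x_{i,N}=\alpha_{i,N}\prod_{k=i+1}^{N-1}(1-\beta_{k,N}).
\end{align*}
For $i=i_N(t)\ge Ne^{-T}-1$, condition \eqref{MP3} gives $\alpha_{i,N}\to1$ uniformly. I will take logarithms of the product and use $\log(1-\beta)=-\beta+O(\beta^2)$. The error is at most $\max_k\beta_{k,N}\sum_k\beta_{k,N}$, which vanishes by \eqref{MP1} and the boundedness of the sums in \eqref{MP2}. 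The locally uniform convergence in \eqref{MP2} with $a=e^{-t}$, $b=1$ then gives $\sum_{k>i}\beta_{k,N}\to t$ uniformly for $t\in[0,T]$, so $x_{i_N(t),N}\to e^{-t}$ uniformly. The endpoint $t$ near $0$ needs no special care, since all quantities are continuous in $(a,b)$ up to $b=1$ and the compact set includes $a=b$.

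For \eqref{markov_kernel_uniform}, I will split the kernel into the adjacent term and the terms with $j\ge i+2$. The adjacent term $P_{i,i+1}=1-\alpha_{i,N}$ is uniformly $o(1)$ by \eqref{MP3}, and since $f$ is bounded it can be discarded. For $j\ge i+2$, I will use
\begin{align*}
 P_{ij}=\alpha_{i,N}\prod_{k=i+1}^{j-2}(1-\beta_{k,N})\beta_{j-1,N}.
\end{align*}
By the same logarithmic argument, the product equals $\exp\{-\sum_{k=i+1}^{j-2}\beta_{k,N}\}(1+o(1))$ uniformly. By \eqref{MP2}, this sum is $\log(j/i)+o(1)$, so the product is $e^{-(t-s_j)}+o(1)$, where $s_j=-\log(j/N)$. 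The sum then becomes $\sum_j\beta_{j-1,N}\,e^{-(t-s_j)}f(s_j)+o(1)$. This has the form of a Stieltjes sum of the continuous function $s\mapsto e^{-(t-s)}f(s)$ against the discrete measure $\mu_N$ that places mass $\beta_{j-1,N}$ at $s_j$.

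The main obstacle is this last step. I must show that $\int g\,d\mu_N\to\int_0^t g(s)\,ds$ uniformly in $t\in[0,T]$, where the integrand also depends on $t$. The plan is to use \eqref{MP2} in its locally uniform form, which says that $\mu_N((u,v])\to v-u$ uniformly for $0\le u\le v\le T$. I will approximate $g$ by step functions on a fixed mesh $0=u_0<\cdots<u_L=T$; uniform continuity of $(t,s)\mapsto e^{-(t-s)}f(s)$ on $[0,T]^2$ controls the approximation error. The total masses $\mu_N([0,T])$ are uniformly bounded by \eqref{MP2}, so the error is at most $\omega(\text{mesh})\cdot\sup_N\mu_N([0,T])$, where $\omega$ is the modulus of continuity. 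Boundary cells that contain $t$ contribute at most $\max\beta\to0$ plus the mass of one cell, and this mass can be made small uniformly. Letting $N\to\infty$ and then refining the mesh gives the claim.

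Finally, the $o(1)$ multiplicative errors are uniform and the total kernel mass is at most $1$, so they contribute $o(1)\|f\|_\infty$. Combining the three parts yields \eqref{markov_kernel_uniform}.
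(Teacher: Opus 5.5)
Your proposal is correct and follows essentially the same route as the paper. Both arguments log-expand the survival factor $\alpha_{i,N}\prod_{r=i+1}^{j-2}(1-\beta_{r,N})$ using \eqref{MP1}, \eqref{MP3} and the locally uniform \eqref{MP2}, discard the adjacent atom $1-\alpha_{i,N}$, and then pass to a Riemann--Stieltjes limit. The only difference is in that last step: the paper writes $P_{ij}^{(N)}$ as an increment of the survival function and sums by parts, whereas you factor $P_{ij}^{(N)}$ as survival times $\beta_{j-1,N}$ and use convergence of the measure $\sum_j\beta_{j-1,N}\delta_{s_j}$ via step functions, which works equally well.
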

\begin{proof}
Write
	\begin{align*}
		 		 \bar\beta_N:=\max_{i_N(T)-1\le i<N}\beta_{i,N},\qquad
 		\eta_N:=\max_{i_N(T)-1\le i<N}(1-\alpha_{i,N}),
	\end{align*}
	so that $\bar\beta_N\to0$ and $\eta_N\to0$ by \eqref{MP1} and
	\eqref{MP3}.  
If $i<j-1$, let
	\begin{align*}
 		Q_{ij}^{(N)}:=\alpha_{i,N}\prod_{r=i+1}^{j-2}(1-\beta_{r,N}).
	\end{align*}
This is the conditional probability, given $I_{i,N}=1$, that there is no
	success at the indices $i+1,\ldots,j-1$.  
Uniformly for $i/N$ and $j/N$ in a fixed compact subset of $(0,1]$ with $i<j$,
	\begin{align*}
		 \log Q_{ij}^{(N)} &=\log\alpha_{i,N} -\sum_{r=i+1}^{j-2}\beta_{r,N}
		 +O\!\left(\bar\beta_N\sum_{r=i+1}^{j-2}\beta_{r,N}\right).
	\end{align*}
The sums of the $\beta_{r,N}$ are uniformly bounded on such proportional
	windows by \eqref{MP2}. 
Hence the locally uniform form of
	\eqref{MP2}, together with \eqref{MP1} and \eqref{MP3}, gives
	\begin{align}
 		Q_{i_N(t),i_N(s)}^{(N)}\longrightarrow e^{-(t-s)}
 	\label{markov_survival_uniform}
	\end{align}
	uniformly for $0\le s\le t\le T$, with the evident harmless adjustment
	of one or two endpoint indices caused by the floor functions.
Taking the terminal endpoint gives
	\begin{align*}
		x_{i,N}=\alpha_{i,N}\prod_{r=i+1}^{N-1}(1-\beta_{r,N}),
	\end{align*}
	and therefore \eqref{markov_defect_uniform}.

The adjacent-success mass satisfies
	$P_{i,i+1}^{(N)}=1-\alpha_{i,N}\le\eta_N=o(1)$ uniformly.  
For $j\ge i+2$,
	\begin{align*}
 		P_{ij}^{(N)}=Q_{ij}^{(N)}\beta_{j-1,N}=Q_{ij}^{(N)}-Q_{i,j+1}^{(N)}.
	\end{align*}
Thus the conditional law of the next success is the Stieltjes increment
	of the survival function $Q_{ij}^{(N)}$, apart from the uniformly
	vanishing adjacent atom.  
The uniform survival convergence \eqref{markov_survival_uniform}, 
	followed by summation by parts and the uniform continuity of $f$ on $[0,T]$, yields
	\eqref{markov_kernel_uniform}.  
Near $t=0$ both sides have total mass tending uniformly to zero, 
	since that mass equals $1-x_{i_N(t),N}$ on
	the discrete side and $1-e^{-t}$ on the limit side.  
This also verifies the endpoint $t=0$.
\end{proof}

\begin{lemma}
\label{lem:markov-bellman-convergence}
Under the assumptions of Lemma~\ref{lem:markov-conditional-kernel}, for each
fixed $k$ and every compact interval $[0,T]$, the finite waiting
values $W_{i,N}^{[k]}$ of \eqref{markovmultipleDP} satisfy
	\begin{align}
		 \sup_{0\le t\le T} \left|W_{i_N(t),N}^{[k]}-v^{[k]}(t)\right|\longrightarrow0,
		 \qquad i_N(t):=\lfloor Ne^{-t}\rfloor.
 		\label{markov_waiting_value_convergence}
	\end{align}
Consequently, the stopping-minus-continuation differences at a current
	success converge uniformly on $[0,T]$ to
	\begin{align*}
		 e^{-t}+v^{[k-1]}(t)-v^{[k]}(t).
	\end{align*}
\end{lemma}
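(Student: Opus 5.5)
Induction on $k$, following Step~5 of the proof of Theorem~\ref{th:uniform-random-horizon-odds}: combine a uniform Bellman comparison with a contraction bound for the kernel on compact logarithmic windows.

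\emph{Limiting Bellman equation.} Write $v^{[0]}\equiv0$, $W^{[0]}\equiv0$, and put $V^{[k]}_{j}=\max\{x_j+W_j^{[k-1]},W_j^{[k]}\}$ as in \eqref{markovmultipleDP}. The first-arrival identity \eqref{poisson-first-arrival}, rewritten with $s$ as the remaining time at the next arrival, reads
\begin{align*}
 v^{[k]}(t)=\int_0^t e^{-(t-s)}\max\{e^{-s}+v^{[k-1]}(s),\,v^{[k]}(s)\}\,ds .
\end{align*}
This is the continuum analogue of $W_i^{[k]}=\sum_{j>i}P_{ij}\max\{x_j+W_j^{[k-1]},W_j^{[k]}\}$.

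\emph{Error inequality.} Put $f_k(s):=\max\{e^{-s}+v^{[k-1]}(s),v^{[k]}(s)\}$, which is continuous on $[0,T]$. Then
\begin{align*}
 \bigl|W^{[k]}_{i}-v^{[k]}(t)\bigr|\le \Bigl|\sum_{j>i}P_{ij}f_k(-\log\tfrac jN)-\int_0^te^{-(t-s)}f_k(s)\,ds\Bigr|+\sum_{j>i}P_{ij}\,e_N(j),
\end{align*}
where $i=i_N(t)$ and
\begin{align*}
 e_N(j):=\bigl|x_j-e^{-s_j}\bigr|+\bigl|W_j^{[k-1]}-v^{[k-1]}(s_j)\bigr|+\bigl|W_j^{[k]}-v^{[k]}(s_j)\bigr|,\qquad s_j=-\log(j/N).
\end{align*}
Here I use $|\max\{a,b\}-\max\{a',b'\}|\le\max\{|a-a'|,|b-b'|\}$.

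The first term tends to zero uniformly by \eqref{markov_kernel_uniform}. The $x_j$-part of $e_N$ tends to zero by \eqref{markov_defect_uniform}, and the level-$(k-1)$ part tends to zero by the induction hypothesis; the base case $k=1$ is trivial. One caveat: for $j$ with $s_j$ in $[0,t]$ one needs values at $s_j\le T$, but the $j$ satisfy $j>i_N(t)$, so $s_j\lesssim t\le T$. The floor-function mismatch between $j$ and $i_N(s_j)$ is absorbed by the uniform continuity of the limits together with the Lipschitz-type stability of $W$ in the index. I would avoid this last point altogether by defining the error as a supremum over all indices $j\ge i_N(T)$ rather than over $t$.

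\emph{Contraction.} Let $E_N:=\sup_{i_N(T)\le j\le N}|W_j^{[k]}-v^{[k]}(s_j)|$. Since $x_i+\sum_{j>i}P_{ij}=1$ and $x_i\to e^{-t}\ge e^{-T}$ uniformly, for large $N$ the kernel mass satisfies $\sum_{j>i}P_{ij}\le 1-c_T$ with $c_T=\tfrac12e^{-T}$. Hence $E_N\le\varepsilon_N+(1-c_T)E_N$, so $E_N\le c_T^{-1}\varepsilon_N\to0$, which proves \eqref{markov_waiting_value_convergence}.

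\emph{Main obstacle.} The hard part is the bookkeeping that makes $\varepsilon_N$ genuinely uniform. First, the mismatch between the discrete grid $\{-\log(j/N)\}$ and continuous $t$ must be controlled. Second, near $t=0$ (indices close to $N$) the kernel mass vanishes on both sides, as in the endpoint argument of Lemma~\ref{lem:markov-conditional-kernel}. For the first issue I would note that $v^{[k]}$ is uniformly continuous on $[0,T]$ and that $i_N(s_j)$ and $j$ differ by at most one index. Consecutive $W$ differ by $O(\eta_N+\bar\beta_N+1/N)$ because of the explicit kernel form, so this mismatch is $o(1)$.

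\emph{Consequence.} The stopping-minus-continuation difference at a current success $i=i_N(t)$ is $x_i+W_i^{[k-1]}-W_i^{[k]}$. By \eqref{markov_defect_uniform} and the result just proved at levels $k-1$ and $k$, it converges uniformly to $e^{-t}+v^{[k-1]}(t)-v^{[k]}(t)$.
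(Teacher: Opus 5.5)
Your proposal is correct and follows essentially the same route as the paper: induction on $k$, the first-success recursion $W_i^{[k]}=\sum_{j>i}P_{ij}\max\{x_j+W_j^{[k-1]},W_j^{[k]}\}$ compared with \eqref{poisson-first-arrival} using the $1$-Lipschitz property of the maximum and Lemma~\ref{lem:markov-conditional-kernel}, and then the contraction bound $E_N\le c_T^{-1}\varepsilon_N$, which comes from the kernel mass being at most $1-c_T$. The floor-function bookkeeping you identify as the main obstacle does not actually arise: $i_N(-\log(j/N))=j$ exactly, and $j>i_N(t)$ forces $-\log(j/N)<t\le T$, so the paper closes the contraction directly with the supremum over $t\in[0,T]$ and never needs your estimate on consecutive differences of $W$.
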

\begin{proof}
The correct finite quantity corresponding to the Poisson value
	$v^{[k]}(t)$ is the waiting value $W_i^{[k]}$, not the value
	$V_i^{[k]}$ conditional on a success already being present.  
Indeed, from \eqref{markovmultipleDP},
	\begin{align}
 		W_{i,N}^{[k]} =\sum_{j=i+1}^N P_{ij}^{(N)}
 		\max\left\{x_{j,N}+W_{j,N}^{[k-1]},\,W_{j,N}^{[k]}\right\}.
 		\label{markov_waiting_recursion}
	\end{align}
This is the finite first-success decomposition corresponding exactly to
	\eqref{poisson-first-arrival}.  
Notice also that $W_{N,N}^{[k]}=0=v^{[k]}(0)$.

We argue by induction on $k$, with $W^{[0]}\equiv v^{[0]}\equiv0$.
Fix $T<\infty$ and put
	\begin{align*}
		 e_N^{[k]}(t) :=\left|W_{i_N(t),N}^{[k]}-v^{[k]}(t)\right|, \qquad
 		E_N^{[k]}(T):=\sup_{0\le t\le T}e_N^{[k]}(t).
	\end{align*}
Lemma~\ref{lem:markov-conditional-kernel}, applied to the continuous function
	\begin{align*}
		 F_k(s):=\max\{e^{-s}+v^{[k-1]}(s),\,v^{[k]}(s)\},
	\end{align*}
	and the $1$-Lipschitz property of the maximum give, from
	\eqref{markov_waiting_recursion} and \eqref{poisson-first-arrival},
	\begin{align}
 		e_N^{[k]}(t)\le \rho_{N,k}(T)+\sum_{j=i_N(t)+1}^N P_{i_N(t),j}^{(N)}
 		e_N^{[k]}\!\left(-\log\frac jN\right),
		 \label{markov_error_inequality}
	\end{align}
	where $\rho_{N,k}(T)\to0$.  
Here $\rho_{N,k}(T)$ contains the kernel
	approximation error from Lemma~\ref{lem:markov-conditional-kernel}, the
	uniform error in $x_{j,N}\to e^{-s}$, and the level-$(k-1)$ error
	$E_N^{[k-1]}(T)$, which tends to zero by the induction hypothesis.

By \eqref{markov_defect_uniform}, for all sufficiently large $N$,
	\begin{align*}
		 x_{i_N(t),N}\ge c_T:=\frac12e^{-T},\qquad 0\le t\le T.
	\end{align*}
Since
	\begin{align*}
 		x_{i,N}+\sum_{j=i+1}^N P_{ij}^{(N)}=1,
	\end{align*}
	the embedded transition kernel has mass at most $1-c_T$ uniformly on $[0,T]$.  
Taking the supremum in \eqref{markov_error_inequality} gives
	\begin{align*}
 		E_N^{[k]}(T)\le \rho_{N,k}(T)+(1-c_T)E_N^{[k]}(T),
	\end{align*}
	and therefore
	\begin{align*}
 		E_N^{[k]}(T)\le c_T^{-1}\rho_{N,k}(T)\longrightarrow0.
	\end{align*}
This proves \eqref{markov_waiting_value_convergence}.

Finally, at a current success state $i$ the stopping-minus-continuation
	difference is, by \eqref{markovmultipleDP},
	\begin{align*}
 		S_{i,N}^{[k]}-W_{i,N}^{[k]}=x_{i,N}+W_{i,N}^{[k-1]}-W_{i,N}^{[k]}.
	\end{align*}
The uniform convergence just proved and
	\eqref{markov_defect_uniform} show that this converges uniformly on
	$[0,T]$ to $e^{-t}+v^{[k-1]}(t)-v^{[k]}(t)$.
\end{proof}

\begin{proof}[Proof of Theorem~\ref{th:markov-poisson-odds}]
By Lemma~\ref{lem:markov-bellman-convergence}, the level-$k$ finite
	stopping-minus-continuation difference converges uniformly on compact
	logarithmic intervals to
	\begin{align*}
 		g_k(t):=e^{-t}+v^{[k-1]}(t)-v^{[k]}(t).
	\end{align*}
The Poisson analysis in Section~\ref{sec:poisson-proof} shows that $g_k$ has
	a unique crossing at $C_k$, with a strict sign change.  
The finite single-crossing assumption ensures that each finite stopping region is an upper
	interval in calendar time: by Theorem~\ref{th_markov_multiple} and \eqref{Vlevel},
	$S_{i,N}^{[k]}\ge W_{i,N}^{[k]}$ for $i\ge i_{*,N}^{[k]}$, whereas for
	$1\le i<i_{*,N}^{[k]}$ one has $G_i^{[k]}<0$, that is,
	$S_{i,N}^{[k]}<\sum_{j>i}P_{ij}^{(N)}S_{j,N}^{[k]}\le W_{i,N}^{[k]}$.  
Uniform convergence and the strict limiting crossing therefore imply
	\begin{align*}
 		-\log\frac{i_{*,N}^{[k]}}N\longrightarrow C_k,
	\end{align*}
	which is equivalent to \eqref{markov-poisson-threshold-limit}.

Fix $T>C_m$ and put $j_N=\lfloor Ne^{-T}\rfloor$.  
By the finite single-crossing assumption, the optimal policy is the nested
	threshold vector of Theorem~\ref{th_markov_multiple}, and by
	\eqref{markov-poisson-threshold-limit} its smallest threshold satisfies
	$i_{*,N}^{[m]}>j_N$ for all large $N$.  
The optimal policy therefore makes no selection at or before $j_N$ and
	still holds $m$ rights at time $j_N$.  
Let $W_{j,N}^{0,[m]}$ denote the optimal value at time $j$ when
	$I_{j,N}=0$ and $m$ rights remain.  Then
	\begin{align*}
 		P_{N,\mathrm M}^{[m]}(\mathrm{win})
 		=P(I_{j_N,N}=0)\,W_{j_N,N}^{0,[m]}+O\bigl(P(I_{j_N,N}=1)\bigr),
	\end{align*}
	and the error term is $o(1)$ by Lemma~\ref{lem:markov-success-occupancy}.
The first-success decomposition from a failure state reads
	\begin{align*}
 		W_{j,N}^{0,[m]}=\sum_{n=j+1}^N
 		\Bigl(\prod_{r=j}^{n-2}(1-\beta_{r,N})\Bigr)\beta_{n-1,N}\,
		\max\bigl\{x_{n,N}+W_{n,N}^{[m-1]},\,W_{n,N}^{[m]}\bigr\}.
	\end{align*}
Exactly as in the proof of Lemma~\ref{lem:markov-conditional-kernel}, the
	survival factor converges to $e^{-(T-s)}$ uniformly in
	$0\le s\le T$, where $s=-\log(n/N)$, and by
	Lemma~\ref{lem:markov-bellman-convergence} and
	\eqref{markov_defect_uniform} the maximum converges uniformly to
	$F_m(s)=\max\{e^{-s}+v^{[m-1]}(s),v^{[m]}(s)\}$.  
Summation by parts therefore gives
	\begin{align*}
 		W_{j_N,N}^{0,[m]}\longrightarrow\int_0^Te^{-(T-s)}F_m(s)\,ds
 			=v^{[m]}(T)=p^{[m]},
	\end{align*}
	by \eqref{poisson-first-arrival} and \eqref{markov-poisson-flat-value}.
Thus \eqref{markov-poisson-value-limit} follows.  
Lemma~\ref{lem:markov-point-process} is not needed for the value limit; it
	explains why the limiting problem is the Poisson last-arrival problem.
The formulas for $C_1,C_2,C_3$ and $p^{[1]},p^{[2]},p^{[3]}$ are those established in
Theorem~\ref{th32} and its Poisson--Bellman proof.  This completes the proof.
\end{proof}

\subsection{Bayesian last-success problem with an unknown success probability}
\label{subsec:odds-unknown-p}
Bruss and Louchard~\cite{brusslouchard} studied versions of the odds
	algorithm in which the success probabilities are unknown and are updated
	sequentially.  
We consider here the homogeneous Bayesian model, which also
	provides a simple example showing that the monotonicity condition in
	Section~\ref{sec:optimal-multiple-stopping} need not hold.

Let $p$ have a Beta prior distribution with parameters $\alpha>0$ and
	$\beta>0$, and, conditionally on $p$, let
	$I_1,\ldots,I_N$ be independent Bernoulli random variables with common
	success probability $p$.  
Thus
	\begin{align*}
 		p\sim\operatorname{Beta}(\alpha,\beta),\qquad
 		P(I_i=1\mid p)=p,\quad i=1,\ldots,N.
	\end{align*}
After integrating out $p$, the indicators are exchangeable but are no
	longer independent.  
The objective is to stop on the last success.
Since stopping at a failure cannot result in a win, there is no loss of
	optimality in restricting attention to success epochs.

Write $S_i:=\sum_{k=1}^i I_k$. 
At a success epoch $i$, the embedded state is denoted by $(i,s)$, 
	where $I_i=1$ and $S_i=s$.  
Given this state, the posterior distribution of $p$ is
	\begin{align*}
		 p\mid (i,s)\sim \operatorname{Beta}(\alpha+s,\,\beta+i-s)
	\end{align*}
	(cf.\ DeGroot~\cite{degroot}, Section~9.2).  
We use the notation
	\begin{align*}
 		B(a,b):=\int_0^1x^{a-1}(1-x)^{b-1}\,dx
	\end{align*}
	for the Beta function. 
Let $g_i(s)$ denote the conditional winning probability when one stops at
	the success state $(i,s)$.  
Then
	\begin{align}
 		g_i(s)&=P(I_{i+1}=\cdots=I_N=0\mid i,s)\notag\\
 			&=\int_0^1(1-p)^{N-i}\frac{p^{\alpha+s-1}(1-p)^{\beta+i-s-1}}
      			  {B(\alpha+s,\beta+i-s)}\,dp\notag\\
 			&=\frac{B(\alpha+s,\beta+N-s)}{B(\alpha+s,\beta+i-s)}
			=\frac{\displaystyle\prod_{k=i}^{N-1}(\beta-s+k)}
         			{\displaystyle\prod_{k=i}^{N-1}(\alpha+\beta+k)}.
 			\label{betareward}
	\end{align}
Empty products are interpreted as one, so that $g_N(s)=1$. 
For $j=i+1,\ldots,N$, let $P_{ij}(s)$ be the transition probability from
	the current success state $(i,s)$ to the next success state $(j,s+1)$:
	\begin{align}
		 P_{ij}(s) &:=P(I_{i+1}=\cdots=I_{j-1}=0,\ I_j=1\mid i,s)\notag\\
 			&=\int_0^1(1-p)^{j-i-1}p  \frac{p^{\alpha+s-1}(1-p)^{\beta+i-s-1}}
      			  {B(\alpha+s,\beta+i-s)}\,dp\notag\\
			 &=\frac{B(\alpha+s+1,\beta+j-s-1)}   {B(\alpha+s,\beta+i-s)}.
 		\label{betatransition}
	\end{align}
The alternatives ``no further success'' and ``the next success occurs at
	$j$'' form a partition, and hence $g_i(s)+\sum_{j=i+1}^NP_{ij}(s)=1$. 

Let $V_i^{[1]}(s)$ denote the optimal winning probability when the
	current embedded state is $(i,s)$.  
Backward induction gives the exact dynamic programming equation
	\begin{align}
 		V_N^{[1]}(s)&=1,\notag\\
 		V_i^{[1]}(s)&=\max\left\{  g_i(s),\  \sum_{j=i+1}^NP_{ij}(s)V_j^{[1]}(s+1) \right\},
 		\qquad 1\le s\le i<N.
		 \label{dpvis}
	\end{align}
Although the optimal stopping rule can be characterized exactly by
	backward induction, it does not, in general, admit a closed-form
	representation as a deterministic threshold rule.  
More precisely, the optimal stopping time is the first success epoch at which the immediate
	stopping reward dominates the state-dependent continuation value.  
Since this comparison depends on both the current time and the number of
	successes observed so far, the stopping region is generally a
	two-dimensional subset of the state space rather than a one-dimensional
	time-threshold region.

Before any observation, the optimal winning probability is
	\begin{align*}
		 		 V_0^{[1]}:=\sum_{j=1}^N \frac{B(\alpha+1,\beta+j-1)}{B(\alpha,\beta)}
   		V_j^{[1]}(1),
	\end{align*}
	where the coefficient is the probability that the first success occurs at time $j$. 
To examine monotonicity, define the one-step look-ahead difference on the
	embedded chain by
	\begin{align*}
		 G_i^{[1]}(s)&:=g_i(s)-\sum_{j=i+1}^NP_{ij}(s)g_j(s+1).
	\end{align*}
Using \eqref{betareward} and \eqref{betatransition}, each term in the sum
	satisfies
	\begin{align*}
 		P_{ij}(s)g_j(s+1)=\frac{B(\alpha+s+1,\beta+N-s-1)}{B(\alpha+s,\beta+i-s)},
	\end{align*}
	which is independent of $j$.  
Therefore
	\begin{align*}
 		G_i^{[1]}(s) &=g_i(s)\bigl(1-H_i(s)\bigr), \\
 		H_i(s)&:=\frac{(\alpha+s)(N-i)}{\beta+N-s-1}.
	\end{align*}
The one-step look-ahead rule stops precisely when $H_i(s)\le1$. 
For the embedded stopping problem to be monotone, whenever
	$H_i(s)\le1$, the same inequality must hold at every possible subsequent
	success state $(j,s+1)$, $j>i$.  
Since $H_j(s+1)$ is decreasing in $j$, this requirement is equivalent to
	\begin{align}
 		H_i(s)\le1 \quad\Longrightarrow\quad H_{i+1}(s+1)\le1, \qquad 1\le s\le i<N.
		 \label{betamonotonecondition}
	\end{align}
Condition \eqref{betamonotonecondition} does not hold in general.  
For example, if $N=10$ and $\alpha=\beta=1$, then
 	$H_6(1)=8/9<1$ and $H_7(2)=9/8>1$.
Both successive states occur with positive probability, so the stopping
	set of the embedded one-step problem is not absorbing.  
Hence the model is not a monotone stopping problem in the sense of 
	Section~\ref{subsec:monotone-multiple}.

Moreover, failure of monotonicity can make the one-step look-ahead rule
	strictly suboptimal.  
Take $N=4$, $\alpha=1$ and $\beta=4$.  
At state $(1,1)$,
	$H_1(1)=1$ and $g_1(1)=5/14$, 
	so the one-step rule, with the stopping convention at equality, stops.
The exact backward recursion, however, yields
	$V_2^{[1]}(2)=3/7$, $V_3^{[1]}(2)=5/8$ and $V_4^{[1]}(2)=1$, 
	and hence the continuation value at $(1,1)$ is
	\begin{align*}
		 \sum_{j=2}^4P_{1j}(1)V_j^{[1]}(2) &=\frac13\frac37+\frac4{21}\frac58+\frac5{42}\cdot1
 		=\frac8{21} >\frac5{14}.
	\end{align*}
Thus the optimal action is to continue, whereas the one-step look-ahead rule stops.
In this example the one-step comparison is an exact tie
	($H_1(1)=1$), so the suboptimality is tied to the stopping convention
	at equality.
Strict examples also exist: for $N=11$, $\alpha=1$ and $\beta=6$, 
	the state $(4,1)$ has
 	$H_4(1)=14/15<1$ and $g_4(1)=45/136\approx 0.3309$,
	so the one-step rule stops with a strict inequality, while the exact
	backward recursion \eqref{dpvis} gives the continuation value
	$11559/34034\approx0.3396>g_4(1)$.

For completeness, the Bayesian model with $\ell$ selection opportunities
	can still be solved by general dynamic programming, although the monotone
	multiple-stopping theorem is not applicable.  Set
	$V_i^{[0]}(s)=0$.  For $\ell\ge1$, define the value of selecting the
	current success and the value of rejecting it, respectively, by
	\begin{align}
		a_i^{[\ell]}(s)&:=g_i(s)+\sum_{j=i+1}^NP_{ij}(s)V_j^{[\ell-1]}(s+1),\notag\\
 		c_i^{[\ell]}(s)&:=\sum_{j=i+1}^NP_{ij}(s)V_j^{[\ell]}(s+1).
 	\label{betamultipleactions}
	\end{align}
Then
	\begin{align}
	 	V_N^{[\ell]}(s)&=1,\qquad \ell\ge1,\notag\\
		V_i^{[\ell]}(s)&=\max\{a_i^{[\ell]}(s),c_i^{[\ell]}(s)\},\qquad 1\le s\le i<N.
	 		\label{betamultipleDP}
	\end{align}
The optimal multiple selection times are obtained recursively by selecting
	a success whenever $a_i^{[\ell]}(S_i)\ge c_i^{[\ell]}(S_i)$, where
	$\ell$ is the number of remaining selection opportunities.  Equations
	\eqref{betamultipleactions}--\eqref{betamultipleDP} provide an exact finite
	algorithm, but no deterministic calendar-time threshold or monotone
	one-step characterization follows from the theory of 
	Section~\ref{subsec:monotone-multiple}.

\section*{Acknowledgments}
The author thanks Professors Tomomi Matsui, Naoto Miyoshi, and Aiko Kurushima 
	for fruitful collaborations and many valuable discussions over the years. 


\bigskip
\noindent Katsunori Ano\\
Department of Human-centered Data Science\\
Bunkyo Gakuin University\\
Tokyo, Japan\\
E-mail: \texttt{k-ano@bgu.ac.jp}

\end{document}